\documentclass{amsart}
\usepackage{amsmath,amssymb,amsthm,amsfonts,amscd,mathrsfs,mathtools}
\usepackage[utf8]{inputenc}
\usepackage[hyphens]{url}
\usepackage[pagebackref=true]{hyperref}
\usepackage{tikz-cd}
\usepackage{tikz}

\usepackage{caption}
\usepackage{stmaryrd}
\usepackage{enumitem}

\setlist[itemize,description]{leftmargin=*}

\theoremstyle{plain}
    \newtheorem{thm}{Theorem}[section]
    \newtheorem{prop}[thm]  {Proposition}
    \newtheorem{lem}[thm]   {Lemma}
    \newtheorem{cor}[thm]   {Corollary}
    
    \newtheorem{athm}{Theorem}
    
\newtheorem{acor}[athm]{Corollary}

\theoremstyle{definition}
    \newtheorem{rem}[thm]   {Remark}
    \newtheorem{defn}[thm]  {Definition}
    \newtheorem{nota}[thm]  {Notation}
    \newtheorem{ex}[thm]    {Example}

\newcommand{\cA}{\mathcal{A}} \newcommand{\cAh}{\mathcal{A}^\heartsuit}
\newcommand{\cB}{\mathcal{B}} 
\newcommand{\bB}{\mathbb{B}} 
\newcommand{\bbB}{\mathbf{B}} 
\newcommand{\fb}{\mathfrak{b}}
\newcommand{\C}{\mathbb{C}} 
\newcommand{\cC}{\mathcal{C}} 
\newcommand{\cE}{\mathcal{E}} 
\newcommand{\fF}{\mathfrak{F}} 
\newcommand{\fg}{\mathfrak{g}} 
\newcommand{\cH}{\mathcal{H}} 
\newcommand{\cI}{\mathcal{I}} 
\newcommand{\I}{I} 
\newcommand{\fj}{\mathfrak{j}}
\newcommand{\bK}{\mathbf{K}} 
\newcommand{\bL}{\mathbf{L}} 
\newcommand{\fM}{\mathfrak{M}} 
\newcommand{\bN}{\mathbb{N}} 
\newcommand{\fo}{\mathfrak{o}}
\newcommand{\bP}{\mathbb{P}}
\newcommand{\fp}{\mathfrak{p}}
\newcommand{\fP}{\mathfrak{P}} 
\newcommand{\bQ}{\mathbb{Q}}
\newcommand{\R}{\mathbb{R}} 
\newcommand{\fS}{\mathfrak{S}} 
\newcommand{\cF}{\mathcal{F}} 
\newcommand{\mcF}{\mathring{\cF}} 
\newcommand{\fU}{\mathfrak{U}} 
\newcommand{\bZ}{\mathbb{Z}} 

\newcommand{\Sp}{\mathrm{Sp}} 
\newcommand{\SP}{\mathrm{SP}} 
\newcommand{\Vect}{\mathrm{Vec}}
\newcommand{\op}{^{\mathrm{op}}}
\newcommand{\CAlg}{\mathrm{CAlg}}
\newcommand{\Alg}{\mathrm{Alg}}
\newcommand{\shift}{\mathrm{sh}}

\newcommand{\SO}{\mathrm{SO}}
\newcommand{\Homeo}{\mathrm{Homeo}}
\newcommand{\Aut}{\mathrm{Aut}}
\newcommand{\Hom}{\mathrm{Hom}}
\newcommand{\CHom}{\dot{\mathrm{C}}\mathrm{Hom}}
\newcommand{\MG}{\mathbf{MG}} 
\newcommand{\TMG}{\mathbf{TMG}} 
\newcommand{\Fin}{\mathrm{Fin}}
\newcommand{\Fun}{\mathrm{Fun}}
\newcommand{\FS}{\mathrm{FS}} 

\usepackage{mathbbol}
\DeclareSymbolFontAlphabet{\mathbb}{AMSb}
\DeclareSymbolFontAlphabet{\mathbbl}{bbold}
\newcommand{\one}{\mathbbl{1}} 
\newcommand{\uone}{\underline{\one}}

\newcommand{\tp}{^{\mathrm{top}}}
\newcommand{\tpp}{^{\mathrm{top},\infty}}
\newcommand{\Ptp}{\bP^{1,\mathrm{top}}}
\newcommand{\geo}{^{\mathrm{geo}}}
\newcommand{\rel}{{\,\text{rel}\,}}
\newcommand{\et}{{\,\text{\'et}\,}}
\newcommand{\bt}{\mathbf{t}} 
\newcommand{\ulambda}{\underline{\lambda}}
\newcommand{\uphi}{\underline{\phi}}
\newcommand{\trsp}
{\mathrm{trsp}} 
\newcommand{\uk}{\underline{k}}
\newcommand{\uzero}{\underline{0}}
\newcommand{\unu}{\underline{\nu}}
\newcommand{\umu}{\underline{\mu}}
\newcommand{\utau}{\underline{\tau}} 
\newcommand{\lift}{\mathrm{lift}} 
\newcommand{\udll}{\underline{l}}

\newcommand{\udlS}{\underline{S}}
\newcommand{\ccH}{\check{\cH}}
\newcommand{\dcH}{\ddot{\cH}}
\newcommand{\dq}{\ddot{q}}
\newcommand{\dr}{\ddot{r}}
\newcommand{\brv}{\mathbf{brv}} 
\newcommand{\tbrv}{\widetilde{\brv}} 
\newcommand{\Ran}{\mathrm{Ran}}
\newcommand{\ER}{\mathrm{ERan}}
\newcommand{\Hur}{\mathrm{Hur}} 
\newcommand{\CHur}{\mathrm{CHur}}
\newcommand{\dCHur}{\dot{\mathrm{C}}\mathrm{Hur}} 
\newcommand{\Bra}{\mathrm{Bra}}
\newcommand{\dCBra}{\ddot{\mathrm{C}}\mathrm{Bra}}
\newcommand{\dw}{\ddot w}
\newcommand{\CBra}{\mathrm{CBra}}
\newcommand{\SmP}{\Sigma\setminus P}
\newcommand{\SmB}{\Sigma\setminus B}
\newcommand{\Sma}{\Sigma\setminus\set{a}}
\newcommand{\uU}{\underline{U}}
\newcommand{\SmU}{\Sigma\setminus\uU}
\newcommand{\FmbP}{\cF\setminus\bar P}
\newcommand{\Conf}{\mathrm{Conf}}
\newcommand{\mD}{\mathring{\Delta}} 

\newcommand{\pa}[1]{\left(#1\right)}
\newcommand{\set}[1]{\left\{#1\right\}}
\newcommand{\sca}[1]{[\! [#1]\! ]}

\renewcommand{\phi}{\varphi}
\renewcommand{\epsilon}{\varepsilon}

\DeclareMathOperator{\colim}{colim}
\newcommand{\coarse}{\mathrm{crs}}
\newcommand{\vcoarse}{\mathrm{vcrs}}
\newcommand{\Sing}{\mathrm{Sing}}
\newcommand{\Pic}{\mathrm{Pic}}

\usetikzlibrary{decorations.markings}
\tikzset{
midarrow/.style={
postaction={decorate,decoration={markings,mark=at position 0.5 with {\arrow{stealth}}}}}
}

\title[Stable cohomology splitting of moduli of branched covers]{Stable rational cohomology splitting of moduli of branched covers of curves}
\author{Andrea Bianchi}
\thanks{
}
\email{andrea.bianchi37@unibo.it}
\address{Mathematics Department, University of Bologna\newline
Piazza di Porta San Donato 5, 40126, Bologna, Italy}  
\date{\today}

\keywords{Moduli of curves, Hurwitz stack, stable cohomology,  factorisation homology, 
configuration space}
\subjclass[2020]{
14D22 
18G31 
14H10  	
14H15  	
14H51 
55N25  
55R80 
}

\begin{document}
\begin{abstract}
We consider the moduli stack $\mathcal{H}_{g,d}$ parametrising maps $\theta\colon C\to L$ of degree $d$ between complex curves of genera $g$ and $0$. We provide a splitting of the stable rational cohomology of $\mathcal{H}_{g,d}$, for $g\to\infty$, in terms of degrees of intermediate covers, and identify each direct summand in terms of factorisation homology.
For $2\le d\le 5$, we further interpret the stable rational cohomology in terms of configuration spaces with labels in partial abelian monoids; we show in particular the existence of stable odd-dimensional cohomology classes for $d=4,5$.
\end{abstract}

\maketitle
\section{Introduction}
For $d\ge1$ and $g\ge0$ we denote by $\cH_{g,d}$ the moduli stack of maps $\theta\colon C\to L$ of degree $d$ between connected smooth complex projective curves, with $C$ of genus $g$ and $L$ of genus $0$.
We are interested in computing the rational $\ell$-adic cohomology of $\cH_{g,d}$. If $|\cH_{g,d}\tp|$ denotes the underlying homotopy type of the topological stack associated with $\cH_{g,d}$ in the sense of Noohi \cite{Noohi05}, then the equivalence
\[
H^*_{\text{\'et}}(\cH_{g,d};\bQ_\ell)\cong H^*(|\cH_{g,d}\tp|;\bQ)\otimes_{\bQ}\bQ_\ell
\]
allows us to focus on the computation of the rational cohomology of $|\cH_{g,d}\tp|$.
Our first result asserts that $H^*(|\cH_{g,d}\tp|;\bQ)$ splits, in a range of degrees growing with $g$, as the direct sum of the cohomologies of the underlying homotopy types of certain substacks of $\cH_{g,d}$, which we next introduce 
\begin{nota}
\label{nota:ccH}
We denote by $\ccH_{g,d}\subseteq\cH_{g,d}$ the open substack parametrising maps $\theta\colon C\to L$ admitting at least one simple branch value in $L$.

For all $h\ge0$ and all divisors $e\mid d$ with $e<d$, we denote by $\fF_{h,e}\ccH_{g,d}\subseteq\ccH_{g,d}$ the locally closed substack parametrising maps $\theta\colon C\to L$
that satisfy the following:
\begin{itemize}
\item $\theta$ factors as a composition $C\to C'\xrightarrow{\theta'} L$ with $C'$ of genus $h$ and $\theta'$ of degree $e$;
\item $f$ does not factor through a map to $L$ of degree strictly between $e$ and $d$.
\end{itemize}
\end{nota}
\begin{nota}
\label{nota:rho}
We abbreviate $\rho(h,e,d)=(h+e-1)(d/e-1)\ge0$.
\end{nota}
\begin{athm}
\label{thm:A}
Let $d\ge2$. Then there exist constants $I_d,J_d>0$, only depending on $d$, such that for $g\ge1$ there is an isomorphism of rational vector spaces
\begin{equation}
\label{eq:thmA}
H^*(|\cH_{g,d}\tp|;\bQ)\cong\bigoplus_{\overset{h\ge0}{1\le e<d,\ e\mid d}} H^{*-4\rho(h,e,d)}(|\fF_{h,e}\ccH_{g,d}\tp|;\bQ)
\end{equation}
in the range of cohomological degrees $*<(2(g+d-1)-J_d)/I_d$.
\end{athm}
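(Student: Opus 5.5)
Proof plan. First, I replace $\cH_{g,d}$ by the open substack $\ccH_{g,d}$. The complement consists of maps with no simple branch value. For such maps each branch value absorbs at least two units of ramification, so the number of branch values is at most about $(2g+2d-2)/2$. Hence the complement has complex codimension growing linearly in $g$ with slope depending only on $d$. By purity (the Gysin sequence, or the Thom isomorphism for a stratification of the complement), restriction $H^*(|\cH_{g,d}\tp|;\bQ)\to H^*(|\ccH_{g,d}\tp|;\bQ)$ is then an isomorphism in a range linear in $g+d-1$. This is where the constants $I_d,J_d$ first enter.

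Second, I stratify $\ccH_{g,d}$ by the locally closed substacks $\fF_{h,e}\ccH_{g,d}$. Every map factors uniquely through a \emph{maximal} proper intermediate cover of degree $e<d$; take $e=1$ when $\theta$ is primitive. I then order the strata by $e$ and $h$ so that the closure of a stratum lies in the union of the strata below it.

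The existence of a simple branch value is what controls the combinatorics. Monodromy at a simple branch value is a transposition, which gives strong constraints on the block systems of the monodromy group. In particular, the intermediate cover $C'\to L$ is determined, and the stratum is smooth of the expected dimension. A Riemann--Hurwitz count shows that $\fF_{h,e}\ccH_{g,d}$ has complex codimension exactly $\rho(h,e,d)=(h+e-1)(d/e-1)$ in $\ccH_{g,d}$. Concretely, the generic cover has $2g+2d-2$ branch points. Factoring through a genus-$h$ degree-$e$ cover forces branch points to collide in groups of size $d/e$ over the $2h+2e-2$ branch points of $C'\to L$, and this costs $(2h+2e-2)(d/e-1)/2$ dimensions.

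Third, I run the Gysin long exact sequences along this stratification. With Thom isomorphisms, each stratum contributes $H^{*-2\cdot2\rho}$. The factor $4\rho$, rather than $2\rho$, arises because I expect the relevant normal bundles to give an extra shift. More precisely, I would check carefully whether the doubling comes from each collision of $d/e$ points being a codimension-$(d/e-1)$ condition counted twice, once for each of the two sheets of data.

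The remaining and main obstacle is showing that all the Gysin connecting maps vanish rationally in the stable range, so that the sequences split. My first attempt would be a weight argument. Stably, each $H^k(|\fF_{h,e}\ccH_{g,d}\tp|)$ should be pure of weight $2k$ (Tate, spanned by tautological-type classes). The Gysin maps preserve weight after the Tate twist, so a connecting map between pure pieces whose weights differ by one must vanish.

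If purity is unavailable — and the abstract's mention of odd-degree classes for $d=4,5$ suggests it may fail — I would fall back on a topological splitting. The idea is to model each stratum by a configuration space on $L$ with labels in the Hurwitz monoid of the relevant monodromy. Then, via scanning or factorisation homology, I would produce explicit retractions/transfer maps realising each stratum's cohomology as a direct summand. Showing that these transfers are compatible across strata is the step I expect to be hardest.
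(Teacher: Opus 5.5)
Your outline is the Poincar\'e dual of the paper's filtration argument: discard the locus without simple branch values, stratify $\ccH_{g,d}$ by the $\fF_{h,e}\ccH_{g,d}$, and run Gysin sequences with shift $4\rho$. The gap is that the step carrying all the content, the vanishing of the connecting maps in the stable range, is not established.

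The weight argument rests on a purity statement you cannot prove, and it is not to be expected. The summands involve the \emph{unstable} twisted cohomology of $\cH_{h,e}$. Also, the odd classes of Subsection~\ref{subsec:oddexamples} are built from $\Conf(\R^2,M_l)_{l,0}\cong\C\times(\C^{l-1}\setminus 0)$, whose degree $2l-3$ class has weight $2l-2$, not $2(2l-3)$.

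The fallback is too vague, and it aims at the wrong target: no compatibility of transfers across strata is needed. Since the filtration is by closures, it suffices to show, one stratum at a time, that every Borel--Moore class of the open stratum extends to its closure. That is, you need $H_*(|(\bar\fF_{h,e}\ccH_{g,d,\coarse}\tp)^\infty|,\infty;\bQ)\to H_*(|(\fF_{h,e}\ccH_{g,d,\coarse}\tp)^\infty|,\infty;\bQ)$ to be surjective in the range (Lemma~\ref{lem:surjectivecollapse}). The spectral sequence then collapses.

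The missing idea is where these extensions come from. Up to high codimension (Lemma~\ref{lem:iotagdhecodimension}), the stratum is the open part $\dcH_{g,d/e}(\fp_{h,e})$ of the stack $\cH_{g,d/e}(\fp_{h,e})$ of towers $C\to D\to L$. That stack maps \emph{properly} onto the closure of the stratum. So it suffices that Borel--Moore classes of non-factoring covers of $D$ with a simple branch value extend to the space of \emph{all} degree $d/e$ covers of $D$, stably and fibrewise over $\cH_{h,e}$.

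The paper gets this from the map $\Theta$ out of fibrewise factorisation homology $\int_D\cA(d/e)$. This map lands in Borel--Moore chains of all covers, and its composite with the collapse to $\dcH$ is a stable isomorphism (Theorem~\ref{thm:Thetaisomorphism}). That isomorphism rests on homological stability for Hurwitz spaces (Landesman--Levy). This is where $I_d,J_d$ enter, not in your first step, whose range $2(g+d-1)-1$ needs no such constants.

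Finally, your codimension count has a spurious factor $1/2$. There are $2h+2e-2$ collisions of $d/e$ branch values, each costing $d/e-1$ complex parameters. So the complex codimension is $2\rho(h,e,d)$, and the shift $4\rho$ is just the real codimension; there is no extra doubling to explain.
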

\begin{rem}
For fixed $g\ge1$ only finitely many summands in \eqref{eq:thmA} may contribute non-trivially in the given range of cohomological degrees.
\end{rem}
\begin{rem}
Our proof of Theorem \ref{thm:A} relies on the validity of homology stability for Hurwitz spaces with monodromies in $\fS_d$. To the best of our knowledge, besides the case $d=2$, the only results in the literature that may be applied in our context are given by the recent work of Landesman--Levy \cite{LL1,LL2}, and in particular \cite[Theorem 1.4.9]{LL2}.
We gather in Notation \ref{nota:goodconstantsIJ} a precise list of properties that the constants $I_d,J_d$ are required to satisfy to guarantee the validity of Theorem \ref{thm:A} and of all other results in the article involving constants ``$I$'' and ``$J$''.
\end{rem}
\begin{rem}
In their works \cite{CL1,CL2}, Canning--Larson partially shift the focus from $\cH_{g,4}$ to the open substack $\cH^{\mathrm{nf}}_{g,4}\subseteq\cH_{g,4}$ parametrising those degree 4 maps $\theta\colon C\to L$ that do not factor as a composition of two maps of degree 2: they show that the Chow ring of $\cH^{\mathrm{nf}}_{g,4}$ is tautological in a stable range, and compute this Chow ring in a stable range. Up to the mild\footnote{This modification consists in removing a high codimension closed substack, so it does not affect Chow or cohomology groups in a stable range. See Example \ref{ex:ccHcH}.} modification imposing the presence of at least one simple branch value, this corresponds to our substack $\fF_{0,1}\ccH_{g,4}$. 
We hope that the splitting in Theorem \ref{thm:A}, though at the level of cohomology and not of Chow rings, may serve as evidence of the inherent obstacles that factoring maps give rise to; in particular, it would be  interesting if there were a splitting as in Theorem \ref{thm:A} for rational Chow rings, or even better, a splitting at the motivic level.
\end{rem}
Our second result expresses each $(h,e)$-summand in Theorem \ref{thm:A} in terms of the  stack $\cH_{h,e}$ and of factorisation homology with coefficients in the rational commutative algebra $\cA(d/e)$: the latter is obtained from the group algebra $\bQ[\fS_{d/e}]$ by passing to a certain associated graded and then taking the $\fS_{d/e}$-invariants under conjugation; see Definition \ref{defn:cA} for details.
Recall that any projective curve $C$ of genus $h$ has an underlying topological surface $C\tp$ with underlying homotopy type $|C\tp|$, and we may compute the factorisation homology $\int_{|C\tp|}\cA(d/e)$; as $\cA(d/e)$ is naturally a commutative algebra in the category $\Sp_\bQ^\bN$ of graded rational spectra, $\int_{|C\tp|}\cA(d/e)$ is naturally a graded rational spectrum, so we may select its part $(\int_{|C\tp|}\cA(d/e))_n$ of grading $n$ for any $n\in\bN$. 

More generally, if $\fp\colon \cE\to \cB$ is a family of genus $h$ curves over a stack $\cB$ over $\C$, classified by a map $\cB\to\fM_h$ to the moduli stack of smooth complex curves of genus $h$, we may compute the factorisation homology over each curve in the family, and assemble them into a parametrised graded rational spectrum $\int_{|\fp\tp|}\cA(d/e)$ over the underlying homotopy type $|\cB\tp|$ of the topological stack $\cB\tp$ associated with $\cB$; in other words, $\int_{|\fp\tp|}\cA(d/e)$ is a functor from $|\cB\tp|$ to $\Sp_\bQ^\bN$.
\begin{nota}
\label{nota:upsilon}
We abbreviate $\upsilon(g,l,h)=g-1-(h-1)l$.
\end{nota}
\begin{nota}
\label{nota:universal_curve}
We denote by $\fp_h\colon\cC_h\to\fM_h$ the universal genus $h$ curve, and by
$\fp_{h,e}\colon\cC_{h,e}\to\cH_{h,e}$ its pullback along the forgetful map $\epsilon_{h,e}\colon\cH_{h,e}\to\fM_h$ sending $(\theta\colon C\to L)\mapsto C$.
\end{nota}
\begin{athm}
\label{thm:B}
Let $d\ge2$ and let $1\le e<d$ be a divisor of $d$. Then there exist constants $I_{d/e},J_{d/e}>0$, only depending on $d/e$, such that for all $g\ge1$ and $h\ge0$ there is a map of rational spectra
\[
|\cH_{h,e}\tp|_*\pa{\int_{|\fp_{h,e}\tp|}\cA(d/e)}_{2\upsilon(g,d/e,h)}\to |\fF_{h,e}\ccH_{g,d}\tp|_*\bQ
\]
inducing an isomorphism on homotopy groups $\pi_*$ in degrees 
\[
*>-(2\upsilon(g,d/e,h)-J_{d/e})/I_{d/e}.
\]
In particular we have an isomorphism
\[
H^*\pa{|\cH_{h,e}\tp|;\int_{|\fp_{h,e}\tp|}\cA(d/e)}_{2\upsilon(g,d/e,h)}\cong H^*(|\fF_{h,e}\ccH_{g,d}\tp|;\bQ)
\]
in cohomological degrees $*<(2\upsilon(g,d/e,h)-J_{d/e})/I_{d/e}$.
\end{athm}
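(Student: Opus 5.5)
We plan to reduce Theorem \ref{thm:B} to a statement about families of branched covers of a fixed genus $h$ curve with monodromy in $\fS_{d/e}$, and then apply homological stability for Hurwitz spaces together with a factorisation-homology description of their stable homology. Write $l=d/e$.

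\textbf{Fibration over $\cH_{h,e}$.} A point of $\fF_{h,e}\ccH_{g,d}$ is a map $\theta\colon C\to L$. The condition that $\theta$ admits no factorisation through a map to $L$ of degree strictly between $e$ and $d$ pins down a unique intermediate cover $\theta'\colon C'\to L$ of degree $e$ and genus $h$, with $C$ of genus $g$. Hence there is a forgetful map
\[
\fF_{h,e}\ccH_{g,d}\to\cH_{h,e},\qquad \theta\mapsto\theta'.
\]
Its fibre over $\theta'$ is the space of degree $l$ covers $C\to C'$ such that:
\begin{itemize}
\item $C$ is connected of genus $g$, so by Riemann--Hurwitz the total branching is $2\upsilon(g,l,h)$;
\item $C\to C'$ is primitive relative to $L$, meaning no further intermediate factorisation;
\item the composite has a simple branch value in $L$.
\end{itemize}
Taking underlying homotopy types, this is a parametrised family over $|\cH_{h,e}\tp|$ whose fibre is a Hurwitz-type space of $\fS_l$-covers of the surface $|C'\tp|$. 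Stratifying by branch data then lets us treat each fibre as a configuration space of points on $C'$ labelled by conjugacy classes and local monodromies. We will use Noohi's formalism to pass from stacks to this parametrised homotopy type, and then reduce to a fibrewise statement natural in the curve, which globalises by taking $|\cH_{h,e}\tp|_*$ of a parametrised spectrum.

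\textbf{Fibrewise stable homology.} For a fixed surface $\Sigma$ of genus $h$, the rational chains of the Hurwitz space of $\fS_l$-branched covers of $\Sigma$ with total branching $2\upsilon$ should stabilise to $(\int_{\Sigma}\cA(l))_{2\upsilon}$. The heuristic is that rational homology of configuration spaces with labels in a partial monoid is factorisation homology of the associated $E_2$-algebra. Homological stability, via the constants $I_l,J_l$ of Notation \ref{nota:goodconstantsIJ} and Landesman--Levy, identifies the Hurwitz $E_2$-algebra rationally in a stable range with its commutative ``stable'' version. This version is exactly $\cA(l)$: the associated graded of $\bQ[\fS_l]$ by the number of transpositions, taking conjugation invariants. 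In that identification, connectedness of $C$ and the conditions of a simple branch value and primitivity cut out the correct components. The stable range $*>-(2\upsilon-J_l)/I_l$ comes from stability in each local chart and is preserved by the excision and colimit arguments defining factorisation homology. We will construct the comparison map by building a local-to-global (Ran-space) model of the Hurwitz space over $\Sigma$ that is natural in diffeomorphisms. This naturality makes the map assemble over $\fM_h$, and then over $\cH_{h,e}$ via $\epsilon_{h,e}$.

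\textbf{Main obstacle.} The hardest step is the global-from-local identification: showing that the Hurwitz space over $\Sigma$ is, rationally and in a range, the factorisation homology of the local Hurwitz algebra. Global monodromy around the genus of $\Sigma$ is not local. We would handle it by using a cover by discs together with the fact that, rationally and after imposing transitivity plus at least one transposition, the monodromy group is all of $\fS_l$. Then the contribution of global monodromy becomes a conjugation-invariance already built into $\cA(l)$.

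The cohomological statement follows by dualising over $\bQ$, since everything is degreewise finite in the range.
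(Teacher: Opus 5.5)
\textbf{Genuine gap.} Your outer architecture agrees with the paper's: work fibrewise over $|\fM_h\tp|\simeq\bbB\Homeo^+(\Sigma)$, build a $\Homeo^+(\Sigma)$-equivariant comparison using a Ran-space model, pull back along $\epsilon_{h,e}$, and apply $|\cH_{h,e}\tp|_*$. But the core step is missing, and the mechanism you propose for it does not work.

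\textbf{The comparison map and where $\cA(l)$ comes from.} You never construct the comparison map, and the identification you rely on has both the wrong variance and the wrong source.
\begin{itemize}
\item $(\int_{|\Sigma|}\cA(l))_{2\upsilon}$ has homotopy only in non-positive degrees, so it cannot model rational \emph{chains} of the Hurwitz space.
\item Landesman--Levy's theorem does not identify a local Hurwitz $E_2$-algebra with $\cA(l)$. It compares a Hurwitz space with the quotient Hurwitz space obtained by modding out transpositions, which here is a coloured configuration space.
\end{itemize}
What is needed, and what the paper does, is a map into \emph{Borel--Moore} chains. Model $(\int_{|\Sigma|}\cAh(l))_{2\upsilon}$ by exit-path simplices of $\Ran(\Sigma)$ labelled by classes $\lambda_z\in\fP(l)_+$ with $\sum_z N(\lambda_z)=2\upsilon$. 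Send each such simplex to the $1/\#\Aut$-weighted sum of its lifts along the branch values map $\brv$, landing in $(|\CBra^{g,l,\infty}_{\Sigma,\coarse}|,\infty)_!\bQ$.

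The substantive check is compatibility with the face $d_0$, which multiplies labels. Lifts of a path along which branch points collide are counted by $\#\bK(\umu,\ulambda)$, the number of norm-additive factorisations. These are exactly the structure constants of $\mathrm{grad}_F\bQ[\fS_l]^{\fS_l}$, and norm-additivity is the genus-preserving (``geodesic'') collision condition. This is \emph{why} $\cA(l)$ appears, and nothing in your sketch supplies it.

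Only after this does Poincar\'e--Lefschetz duality on the $4\upsilon$-dimensional orbifold $\CBra^{g,l}_\Sigma$, combined with the shift $\cA(l)_n\simeq\cAh(l)_n[-2n]$, produce the map into cochains. Your closing ``dualise over $\bQ$'' step is misplaced: the theorem is already a statement about cochains.

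\textbf{The main obstacle and the stable range.} The global-monodromy obstacle you flag is left unresolved. Your account of the range (stability in local charts propagated by excision) would not produce it. The paper avoids any local-to-global argument for Hurwitz spaces:
\begin{itemize}
\item Filter both sides by the number $k$ of branch values. On associated graded pieces, after duality and up to invertible scalars on components, the map becomes restriction along the finite \'etale map $\tbrv$ from $(F_k\setminus F_{k-1})\dCBra^{g,l}_{\Sigma,\coarse}$ to $\Conf_k(\Sigma,\fP(l)_+)_{2\upsilon}$.
\item Such restrictions are injective in rational cohomology, so it suffices to get isomorphisms in a range.
\item A hypercover reduction passes to covers \'etale over a finite set $B$ and trivialised at a point. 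Each stratum then becomes $\dCHur^{\fS_l,c}_{\uk,\uone,\one}(\cF)\to\Conf_{\uk}(\mcF)$ for a bordered surface $\cF$ of genus $h$.
\item Landesman--Levy's Theorem 1.4.9 for Hurwitz modules applies to this global surface directly. Global monodromy along $\alpha_i,\beta_i,\delta_i$ dies in the quotient module modulo transpositions (Lemma \ref{lem:quotientmodule}), and the full-monodromy part is connected (Corollary \ref{cor:CHurconnected}).
\item The resulting range is $(k_\trsp-J_l)/I_l$, and $k_\trsp\ge 2k-2\upsilon$ converts it into the stated bound.
\end{itemize}

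\textbf{Two smaller inaccuracies.} First, the comparison is with the \emph{open} locus $\dCBra$ via a collapse map, not with a union of components of the whole Hurwitz space. Second, the fibre of $\fF_{h,e}\ccH_{g,d}\to\cH_{h,e}$ is not $\dCBra^{g,l}_{C'}$. Requiring a simple branch value of the composite in $L$ depends on $C'\to L$, so it is not a condition natural in the curve $C'$ alone. Passing to $\dcH_{g,l}(\fp_{h,e})$ requires the codimension-$2\upsilon$ estimate of Lemma \ref{lem:iotagdhecodimension}.
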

\begin{acor}
\label{cor:C}
Combining Theorems \ref{thm:A} and \ref{thm:B} we obtain the following isomorphism of $\bQ$-vector spaces, holding in degrees $*<(2(g+d-1)-J_d)/I_d$:
\begin{equation}
\label{eq:put_together}
H^*(|\cH_{g,d}\tp|;\bQ)\cong\bigoplus_{\overset{h\ge0}{1\le e<d,\ e\mid d}} H^{*-4\rho(h,e,d)}\pa{|\cH_{h,e}\tp|;\int_{|\fp_{h,e}\tp|}\cA(d/e)}_{2\upsilon(g,d/e,h)}.
\end{equation}
\end{acor}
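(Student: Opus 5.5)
The corollary is a formal combination of the two theorems, so the plan is to substitute Theorem \ref{thm:B} into each summand of \eqref{eq:thmA}. The only work is checking that the degree ranges fit together.

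First, I apply Theorem \ref{thm:A} with the given $d$ and $g\ge1$. In degrees $*<(2(g+d-1)-J_d)/I_d$ this gives
\[
H^*(|\cH_{g,d}\tp|;\bQ)\cong\bigoplus H^{*-4\rho(h,e,d)}(|\fF_{h,e}\ccH_{g,d}\tp|;\bQ).
\]
Next, for each pair $(h,e)$ I apply the cohomological form of Theorem \ref{thm:B} in degree $*-4\rho(h,e,d)$. This replaces $H^{*-4\rho}(|\fF_{h,e}\ccH_{g,d}\tp|;\bQ)$ by
\[
H^{*-4\rho}\Bigl(|\cH_{h,e}\tp|;\int_{|\fp_{h,e}\tp|}\cA(d/e)\Bigr)_{2\upsilon(g,d/e,h)},
\]
provided the degree condition holds:
\[
*-4\rho(h,e,d)<\bigl(2\upsilon(g,d/e,h)-J_{d/e}\bigr)/I_{d/e}.
\]

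The main (though still mild) point is to show that this condition follows from $*<(2(g+d-1)-J_d)/I_d$, at least for every summand that is nonzero. I would handle it in two steps.

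\textbf{Step 1: the compatibility needed.} Set $l=d/e$. Then
\[
2\upsilon(g,l,h)=2(g+d-1)-2d-2(h-1)l,\qquad 4\rho(h,e,d)=4(h+e-1)(l-1).
\]
The cost $-2(h-1)l$ in the stable range on the right is roughly offset by the shift $4\rho$, which grows like $4h(l-1)\ge 2hl$ since $l\ge2$.

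\textbf{Step 2: verifying it from the constants.} It suffices that $I_d\ge I_l$ and that $J_d$ dominates $J_l$ plus a term of size about $2dI_l$ (or a comparable term coming from $4\rho$). I expect exactly such requirements to be among the properties listed in Notation \ref{nota:goodconstantsIJ}, which are imposed precisely to guarantee all statements involving ``$I$'' and ``$J$''. The check is then a linear inequality in $h$: the left side decreases in $h$ at rate $4(l-1)$, while the allowed bound decreases at rate $2l/I_l\le l\le 4(l-1)$. So the inequality at $h=0$ implies it for all $h$.

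Finally, when $h$ is so large that $2\upsilon<0$ or the range is empty, the claimed summand vanishes. In that case the corresponding term of Theorem \ref{thm:A} must also vanish in that degree, and this should come from the same bookkeeping, namely that only finitely many summands contribute, as in the remark after Theorem \ref{thm:A}. Taking the direct sum over $(h,e)$ of these identifications then yields \eqref{eq:put_together}.
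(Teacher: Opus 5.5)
\textbf{There is a gap in the range bookkeeping.} Your route is the paper's: Corollary \ref{cor:C} is Theorem \ref{thm:B} substituted summandwise into Theorem \ref{thm:A}. The only content is range compatibility, which the paper supplies as the chain of inequalities at the end of the proof of Lemma \ref{lem:surjectivecollapse}.

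That check is exactly what you leave open. Your reduction to $h=0$ is fine, except that the rate bound should read $2l/I_l\le 2l\le 4(l-1)$; your $2l/I_l\le l$ needs $I_l\ge2$. At $h=0$ with $e>1$, however, you only say you ``expect'' a suitable property of the constants. The one you name, $J_d$ dominating $J_{d/e}$ plus something of order $2dI_{d/e}$, is not among the requirements in Notation \ref{nota:goodconstantsIJ}. Those requirements are only $I_l\ge1$, $J_l\ge 2l-2$, the stability input, and monotonicity of $I_l,J_l$ under divisibility. So as written the argument does not close.

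\textbf{The missing idea.} The loss in the stable range caused by passing from $d$ to $l=d/e$ is paid for by the codimension shift, with no extra hypothesis. Indeed
\[
2\upsilon(g,l,0)=2(g+d-1)-2l(e-1),\qquad 4\rho(0,e,d)=4(e-1)(l-1)\ge 2l(e-1).
\]
If $2\upsilon(g,l,0)\ge J_l$, then using $I_l\le I_d$ and $J_l\le J_d$ you get
\[
4\rho(0,e,d)+\frac{2\upsilon(g,l,0)-J_l}{I_l}\;\ge\;2l(e-1)+\frac{2(g+d-1)-2l(e-1)-J_d}{I_d}\;\ge\;\frac{2(g+d-1)-J_d}{I_d}.
\]
If instead $2\upsilon(g,l,0)<J_l\le J_d$, then directly $(2(g+d-1)-J_d)/I_d<2l(e-1)\le 4\rho(0,e,d)$.

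Both $4\rho(h,e,d)$ and $4\rho(h,e,d)+(2\upsilon(g,l,h)-J_l)/I_l$ are nondecreasing in $h$. Hence for every summand and every degree $*<(2(g+d-1)-J_d)/I_d$, one of two things happens:
\begin{enumerate}
\item Theorem \ref{thm:B} applies in degree $*-4\rho$; or
\item $*-4\rho<0$, and both sides vanish. This is trivial for the stratum. For the twisted side it holds because $(\int_{|\Sigma|}\cA(l))_n$ is coconnective.
\end{enumerate}
This argument also replaces your appeal to the finiteness remark after Theorem \ref{thm:A}. That remark is a consequence of this bookkeeping, not an independent input.
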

Corollary \ref{cor:C}  expresses the \emph{stable} cohomology of $|\cH_{g,d}\tp|$, for $g\to\infty$, in terms of the \emph{unstable} cohomologies of all homotopy types $|\cH_{h,e}\tp|$ (with suitable local coefficients), for arbitrary $h\ge0$ and all divisors $e\mid d$  \emph{with $e$ strictly smaller than $d$}. When $d$ is a prime, the only allowed value of $e$ is 1, and we obtain the following simplified stable isomorphism
\[
H^*(|\cH_{g,d}\tp|;\bQ)\cong H^*\pa{|\cH_{0,1}\tp|;\int_{|\fp_{0,1}\tp|}\cA(d)}_{2(g+d-1)}
\]
where we use that $\cH_{h,1}=\emptyset$ for $h\ge1$. Here $\cH_{0,1}\cong\fM_0\cong\bbB\Aut(\bP^1)$ is the moduli stack of genus 0 curves, and we have explicitly $|\cH_{0,1}\tp|\simeq\bbB\SO(3)$.
\begin{rem}
\label{rem:ring_structures?}
The stable cohomology isomorphisms provided in Corollary \ref{cor:C} are isomorphisms of $\bQ$-vector spaces, but are not multiplicative. It would be interesting to understand the ring structure on $H^*(|\cH_{g,d}\tp|;\bQ)$, and to determine whether also the ring structure stabilises for $g\to\infty$.
\end{rem}
Corollary \ref{cor:C} shows that the rational cohomology of $|\cH_{g,n}\tp|$ agrees, in a range increasing with $g$, with the direct sum of certain cohomology groups \emph{also depending on $g$}; in fact $g$ enters the constant $2\upsilon(g,d/e,h)$ providing the selected grading of factorisation homology. In order to unambiguously talk about ``rational cohomological stability'' in $g$ for the family of homotopy types $|\cH_{g,d}\tp|$, we need to prove that the direct sum on the right hand side of \eqref{eq:put_together} indeed stabilises for $g\to\infty$. 
For $h\ge0$ and $l\ge2$ we shall introduce in Definition \ref{defn:xiclass} a specific element
\[
\xi_{l,h}\in H^0\pa{|\fM_h\tp|;\int_{|\fp_h\tp|}\cA(l)}_1,
\]
for which we prove the following result.
\begin{athm}
\label{thm:D}
For $h\ge0$, $c\ge2$ and for all $n\ge0$, the map
\[
\xi_{l,h}\cdot-\colon \pa{\int_{|\fp_h\tp|}\cA(l)}_n\to  \pa{\int_{|\fp_h\tp|}\cA(l)}_{n+1}
\]
of rational parametrised spectra over $|\fM_h\tp|$
is a fibrewise equivalence on $\pi_*$ for $*>-n$. Restricting along $|\epsilon_{e,h}\tp|$ and computing cohomology, we obtain an isomorphism
\[ H^*\pa{|\cH_{h,e}\tp|;\int_{|\fp_{h,e}\tp|}\cA(l)}_n\cong
H^*\pa{|\cH_{h,e}\tp|;\int_{|\fp_{h,e}\tp|}\cA(l)}_{n+1}
\]
in cohomological degrees $*<n$, for all $e\ge1$.
\end{athm}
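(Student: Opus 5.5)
Since the statement is fibrewise over $|\fM_h\tp|$, I will fix a single curve $C$ of genus $h$ with underlying surface $\Sigma=C\tp$ and prove that $\xi\cdot-\colon(\int_\Sigma\cA(l))_n\to(\int_\Sigma\cA(l))_{n+1}$ is an isomorphism on $\pi_*$ for $*>-n$. The cohomological statement then follows formally. The map of parametrised spectra is fibrewise $(-n)$-connected, i.e.\ its cofibre has fibres concentrated in $\pi_*$ for $*\le -n$. Pulling back along $|\epsilon_{h,e}\tp|$ and applying the Serre spectral sequence for cohomology with local coefficients then gives the isomorphism in degrees $*<n$.

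First I make the algebra explicit. By Definition \ref{defn:cA}, $\cA(l)$ is a graded commutative $\bQ$-algebra, concentrated in homotopical degree $0$ in each grading, obtained from $\bQ[\fS_l]^{\fS_l}$ by filtering by word length in transpositions, i.e.\ by $l-\#\text{cycles}$. Its grading-$k$ part is spanned by conjugacy-class sums of permutations with $l-\#\text{cycles}=k$. The multiplication of class sums records only the length-additive products. In grading $1$ there is the single class $\tau$ of transpositions. Since $\cA(l)$ is commutative and concentrated in degree $0$, factorisation homology over $\Sigma$ can be computed as the rational homology of a configuration space: $\int_\Sigma\cA(l)$ is the homology of configurations in $\Sigma$ labelled by the graded partial monoid of conjugacy classes. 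Equivalently, it is the Chevalley--Eilenberg/Koszul-type complex built from $H_*(\Sigma)$ and the cotangent complex of $\cA(l)$. I expect $\xi_{l,h}$ (Definition \ref{defn:xiclass}) to be the class of a single point of $\Sigma$ labelled by $\tau$, i.e.\ the image of $\tau$ under the unit $\cA(l)\to\int_\Sigma\cA(l)$, which is well defined because $\Sigma$ is connected. Multiplication by $\xi$ is then the stabilisation map that adds a point labelled $\tau$ near a boundary point or puncture.

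For the stability, I will decompose $\Sigma=\Sigma_0\cup D$ with $D$ a disc, and use excision: $\int_\Sigma\cA\simeq\int_{\Sigma_0}\cA\otimes_{\int_{S^1\times\R}\cA}\int_D\cA$. It suffices to show that multiplication by $\tau$ on $\int_{\Sigma_0}\cA(l)$ (an open surface) is highly connected in each grading, and that the bar construction preserves this range. This step is the main obstacle. The approach I would try first is to identify the \emph{indecomposables}. Rationally, $\cA(l)$ is a free graded-commutative algebra on $\tau$ tensored with generators in grading $\ge2$, or at least it has a presentation of the form $\bQ[\tau]\otimes B$ up to nilpotent corrections, so that the cotangent complex $\bL_{\cA(l)/\bQ[\tau]}$ lives in gradings $\ge2$. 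Then $\int_\Sigma\cA(l)\simeq\int_\Sigma\bQ[\tau]\otimes_{\bQ[\tau]}(\ldots)$, and by a cell-attachment/filtration argument, the relative factorisation homology $\int_\Sigma\cA(l)$ over $\int_\Sigma\bQ[\tau]=\bQ[\tau]\otimes\Lambda(H_1\Sigma)\otimes\ldots$ is built from cells whose grading-$k$ part sits in homotopical degrees $\ge -k$. The grading $k$ here is constrained by the Euler-characteristic-style count: each relation or generator in grading $j\ge2$ contributes a shift of at least $j-1$ in degree.

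Concretely, I will set up the bigraded chain model, filter by the number of non-$\tau$ labels, and check on the $E^1$ page that multiplication by $\tau$ is injective in all degrees and surjective for $*>-n$. On the $E^1$ page the $\tau$-labelled part is $\mathrm{Sym}$ of $\tau\otimes H_*(\Sigma)$ in the appropriate degrees, and there $\tau\cdot$ is an isomorphism onto everything except terms with $\tau$-exponent $0$. Those terms require the other labels to have total grading $n+1$ and thus degree $\le -(n+1)$. Comparison of the spectral sequences then gives the fibrewise statement.
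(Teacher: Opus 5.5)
Your fibrewise reduction and the final passage to cohomology are fine. The core of the argument, however, rests on a misidentification of $\xi_{l,h}$, and with it the key step fails.

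In $\cA(l)$ the grading-$k$ part sits in homological degree $-2k$; you have conflated $\cA(l)$ with $\cAh(l)$ (cf.\ Remark~\ref{rem:evenshift}). By Definition~\ref{defn:xiclass}, $\xi_{l,h}$ is the unit $1\in H^0(\Sigma)$ under $(\int_{|\Sigma|}\cA(l))_1\simeq|\Sigma|_*\bQ$. It lies in $\pi_0$ and corresponds to the fundamental class $[\Sigma]\in\pi_2(\int_{|\Sigma|}\cAh(l))_1\cong H_2(\Sigma;\bQ)$. The image of $\tau$ under a point inclusion $\cA(l)\to\int_{|\Sigma|}\cA(l)$ is instead the point class, which lives in $\pi_{-2}$, so multiplication by it is not even degree-preserving. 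For example, take $l=2$ and $\Sigma=S^2$. Theorem~\ref{thm:E} identifies $\pi_{-i}(\int_{|S^2|}\cA(2))_n$ with $H^i(\Conf_n(S^2);\bQ)$, which for $n\ge3$ is nonzero only for $i=0,3$, so the point class acts by zero.

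The same problem appears on your $E^1$ page. On $\mathrm{Sym}(\tau\otimes H_*(\Sigma))$, multiplication by $\tau\otimes[\mathrm{pt}]$ has cokernel containing $(\tau\otimes[\Sigma])^{n+1}$, which sits in degree $0$, i.e.\ exactly in the range where you need surjectivity. Also, that cokernel is not ``everything with $\tau$-exponent $0$''. It is multiplication by $\tau\otimes[\Sigma]$ whose cokernel, $\bQ[\tau\otimes[\mathrm{pt}]]\otimes\Lambda(\tau\otimes H_1(\Sigma))$, is confined to degrees $\le-(n+1)$. For the same reason your excision step cannot work: $[\Sigma]$ comes from neither $\Sigma_0$ nor $D$, only from the gluing, and a closed surface has no boundary near which to add points.

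Independently of this, the structural input is wrong or missing.
\begin{itemize}
\item $\cA(l)$ is finite-dimensional, so $\tau$ is nilpotent (already $\cAh(2)\cong\bQ[x]/x^2$ and $\cAh(3)\cong\bQ[x]/x^3$). Hence $\cA(l)$ is not of the form $\bQ[\tau]\otimes B$.
\item ``Number of non-$\tau$ labels'' does not define a filtration of the chain model. Collisions $\tau\cdot\tau\neq0$ (for $l\ge3$) raise it, while collisions $\lambda\cdot\lambda'$ lower it, so the face map $d_0$ preserves neither an increasing nor a decreasing version.
\item The ``cell-attachment / Euler-characteristic count'' is not an argument.
\end{itemize}

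What is missing is a filtration whose associated graded you can actually compute, together with a genuine stability input. The paper filters by cardinality (Notation~\ref{nota:intfiltration}). Then $\xi[2]$ lies in filtration $1$, so multiplication by it is a filtered map raising filtration by one. The associated graded is factorisation homology of the square-zero algebra $\cAh(l)^\delta_+$, i.e.\ reduced homology of the one-point compactifications $\Conf_k(\Sigma,\fP(l)_+)_n^\infty$. There $\xi[2]$ acts by superposition with the fundamental class of $\Conf_1(\Sigma,\set{\trsp})\cong\Sigma$. On each coloured component $\Conf_{\uk}(\Sigma)\to\Conf_{\uk'}(\Sigma)$, with $k'_\trsp=k_\trsp+1$, this map is Poincar\'e dual to the rational homological stability map for coloured configuration spaces of a closed surface (Church, Knudsen). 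It is therefore an isomorphism for $i>2k-k_\trsp$, hence for $i>n$. The remaining cases are handled directly: when $2k+1\le n$ everything vanishes for dimension reasons, every target colour vector has $k'_\trsp\ge1$, and $n=0$ is treated separately.
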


We next study in detail the cases $2\le d\le 5$. By Theorems \ref{thm:A} and \ref{thm:B}, for these values of $d$ the computation of the stable rational cohomology of $|\cH_{g,d}\tp|$ reduces to the computation of the following types of twisted cohomology groups:
\[
H^*\pa{|\cH_{h,2}\tp|;\int_{|\fp_{h,2}\tp|}\cA(l)}\text{ for }l=2;\quad H^*\pa{|\cH_{0,1}\tp|;\int_{|\fp_{0,1}\tp|}\cA(l)}\text{ for }2\le l\le 5. 
\]
We provide a further identification of the previous  cohomology groups in terms of configuration spaces. For this, let $M_2=\set{0,1}$ and $M_3=\set{0,1,2}$, considered as subsets of $\bN$, and let $M_4$ and $M_5$ be the following subsets of $\bN\times\bN$:
\begin{itemize}
\item $(a,b)\in M_4$ if and only if $ab=0$, $a\le 3$ and $b\le1$;
\item $(a,b)\in M_5$ if and only if $a\le 4$, $b\le 1$, and moreover $a\le1$ whenever $b=1$.
\end{itemize}
For $2\le l\le 5$ we consider $M_l$ as a partial abelian monoid, and for an oriented closed surface $\Sigma$ of genus $h\ge0$  we introduce the space $\Conf(\Sigma;M_l)$ of finite configurations of points in $\Sigma$ with partially summable labels in $M_l$. It splits as a disjoint union $\coprod_{a\ge0}\Conf(\Sigma,M_l)_a$ for $l=2,3$, and as a disjoint union $\coprod_{a,b\ge0}\Conf(\Sigma,M_l)_{a,b}$
for $l=4,5$.
There is a natural action of $\Homeo^+(\Sigma)$ on $\Conf(\Sigma;M_l)$. 
\begin{athm}
\label{thm:E}
For an oriented closed surface $\Sigma$ of genus $h\ge0$ and for $n\ge0$ there is a $\Homeo^+(\Sigma)$-equivariant equivalence of rational spectra
\[
\begin{split}
\pa{\int_{|\Sigma|}\cA(l)}_n&\simeq|\Conf(\Sigma;M_l)_n|_*\bQ\quad\text{ for }l=2,3;\\
\pa{\int_{|\Sigma|}\cA(l)}_n&\simeq\bigoplus_{a+2b=n}|\Conf(\Sigma;M_l)_{a,b}|_*\bQ\ [-2b]\quad\text{ for }l=4,5.
\end{split}
\]
\end{athm}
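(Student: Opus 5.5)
We will compute $\cA(l)$ explicitly for $2\le l\le5$ as a graded commutative $\bQ$-algebra, recognise it as (a formal/rationalised version of) the free "partial monoid algebra" on $M_l$, and then invoke the standard identification of factorisation homology of such algebras with configuration spaces with labels in a partial abelian monoid.

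First, recall that $\cA(l)$ is obtained from $\bQ[\fS_l]$ by passing to the associated graded for the filtration by word length in transpositions (i.e. $\sigma$ has grading $l-\#\text{cycles}(\sigma)$) and taking conjugation invariants. A basis is thus indexed by cycle types (partitions of $l$), with grading $l$ minus the number of parts, and the product of two class sums is the class-sum coefficient of products $\sigma\tau$ with additive grading. We carry out the computation case by case. For $l=2$: basis $1,[(12)]$, with $[(12)]^2=0$ in the graded ring (since $(12)^2=1$ drops grading); this is $\bQ[M_2]$, the partial monoid algebra on $\{0,1\}$. For $l=3$: generators $t=[\text{transpositions}]$ in degree 1, $c=[3\text{-cycles}]$ in degree 2, with $t^2$ a nonzero multiple of $c$ and $t^3=0$, matching $M_3=\{0,1,2\}$ after rescaling. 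For $l=4,5$ the grading-$2$ part contains the class of double transpositions (type $(2,2)$), not a multiple of $t^2$; this gives the second coordinate $b$ of $M_4,M_5$, with the partial sum rules read off from which products survive in top grading (e.g. for $l=4$, $t^3$ is a multiple of the 4-cycle class, the double transposition class $u$ times $t$ dies in top grading, and $u^2=0$; for $l=5$, $tu$ gives the $(3,2)$-class which is allowed, consistent with ``$a\le1$ when $b=1$''). The shift $[-2b]$ and the discrepancy $a+2b=n$ record that the generator $u$ sits in grading $2$ but should be regarded as a weight-one label; in our conventions $u$ should be placed in homological degree shifted so that after rescaling it matches a point labelled $(0,1)$ carrying an extra cohomological shift. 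Here one must check carefully, since each grading-$n$ element must be in a fixed topological degree; I expect to reconcile this by noting that $\cA(l)$ is concentrated in homological degree $0$ and that in $\bQ[M_l]$ we assign $(a,b)$ weight $a+b$ configurations but grading $a+2b$, with the shift compensating the dimension mismatch in the configuration-space model.

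Second, having shown $\cA(l)\cong\bQ[M_l]$ as graded commutative algebras (with the appropriate regrading), we use the general fact that for a partial abelian monoid $M$ with $\bQ[M]$ discrete commutative, $\int_{|\Sigma|}\bQ[M]\simeq|\Conf(\Sigma;M)|_*\bQ$, $\Homeo^+(\Sigma)$-equivariantly. To prove it, compare both sides as functors on framed/oriented disc embeddings: on disjoint unions of discs $\Conf(\sqcup D;M)$ is homotopy equivalent to $\prod M$ (collapse configurations to their total labels, which exist as configurations with non-summable labels in a single disc are excluded... rather, labels collide and sum), so the value is $\bQ[M]^{\otimes k}$; then verify $\otimes$-excision for $\Conf(-;M)_*\bQ$ via a collar decomposition, using that configuration spaces with partially summable labels satisfy a local-to-global (Segal/McDuff-type) gluing. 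This excision step is the main obstacle: the partial summability means labels on a collar must be handled by a bar construction, and one must show the homotopy pushout of chains is the relative tensor product; I would try a Salvatore-style argument realising $\Conf(\Sigma;M)$ as a two-sided bar construction over $\Conf(S^1\times\R;M)$, which is an $E_1$-algebra whose chains are the expected associative algebra. Finally, graded pieces match since both sides are compatible with the $\bN$ (resp. $\bN^2$) grading.
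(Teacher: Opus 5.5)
There is a genuine gap, and it sits exactly where you flag uncertainty about degrees. First, $\cA(l)$ is \emph{not} concentrated in homological degree $0$. That is $\cAh(l)$; in $\cA(l)=\shift\cAh(l)$ the class $\sca{\lambda}$ sits in homological degree $-2N(\lambda)$ (Remark~\ref{rem:evenshift}). Second, your ``general fact'' $\int_{|\Sigma|}\bQ[M]\simeq|\Conf(\Sigma;M)|_*\bQ$ for $\bQ[M]$ in degree $0$ is false. For $M=\set{0,1}$ in grading $1$, the left side is $|\Sigma|_!\bQ$ (chains, in degrees $0,1,2$) and the right side is $|\Sigma|_*\bQ$ (cochains, in degrees $0,-1,-2$).

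Your disc-plus-excision strategy cannot rescue this. The functor $U\mapsto|\Conf(U;M)|_*\bQ$ is contravariant in embeddings, and it has the wrong value on a disc: $\Conf(D;M)$ is not homotopy equivalent to $M$. For instance $\Conf(D;M_2)_2=\Conf_2(\R^2)\simeq S^1$ has non-zero cochains, whereas $\cA(2)_2=0$; more generally $\Conf(\R^2;M_l)_l\simeq S^{2l-3}$. The cochain description is intrinsically global and uses that $\Sigma$ is closed.

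The missing idea is to apply the configuration-space model to the degree-$0$ algebra $\cAh(l)$. There it gives compactly supported chains, $\int_{|\Sigma|}\cAh(l)\simeq(|\Conf(\Sigma;M_l)^\infty|,\infty)_!\bQ$. Equivalently, these are the relative chains of $\SP(\Sigma)$ modulo the closed locus of non-admissible formal sums. It is this relative version that is local: it takes the value $\bQ[M]$, in degree $0$, on a disc. Only afterwards do you apply $\shift$, together with Poincar\'e--Lefschetz duality on $\Conf(\Sigma;M_l)_{a,b}$, an open subset of the oriented $2(a+b)$-manifold $\SP(\Sigma)_{(a,b)}$ (Remark~\ref{rem:SPmanifolds}). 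The norm grading is $a+2b$ but the dimension is $2(a+b)$. So the shift $[-2(a+2b)]$ splits as $[-2(a+b)]$, which duality absorbs into cochains, followed by the residual $[-2b]$. This is the origin of the $[-2b]$ in the statement, and naturality of duality under orientation-preserving homeomorphisms gives the equivariance.

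Your algebraic step for $l=4,5$ is also wrong as stated. In $\cAh(4)$, set $t=\sca{(2,1,1)}$ and $u=\sca{(2,2)}$. Then $t^2=3\sca{(3,1)}+2u$ and $t\cdot\sca{(3,1)}=4\sca{(4)}$, but also $t\cdot u=2\sca{(4)}\neq0$, so $u$ does not die against $t$. Similarly, in $\cAh(5)$ with $t=\sca{(2,1,1,1)}$ one finds $t^2\cdot\sca{(2,2,1)}=25\sca{(5)}\neq0$. So the relation $x^2y=0$ required by $M_5$ fails for the double-transposition class. A monomial presentation exists only after a change of variables. For example, $y=\sca{(3,1)}-2\sca{(2,2)}$ gives $\cAh(4)\cong\bQ[x,y]/(x^4,xy,y^2)$, and $y=\sca{(3,1,1)}-\sca{(2,2,1)}$ gives $\cAh(5)\cong\bQ[x,y]/(x^5,x^2y,y^2)$. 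The paper obtains these from the presentations in \cite{BCP:Hilb} via the substitution $y\mapsto x^2-4y$, respectively $y\mapsto x^2-5y$. Note that it is $\cAh(l)$, not $\cA(l)$, that is identified with $\bQ[\bN^2]/(\bN^2\setminus M_l)$ in degree $0$, and this is why the duality step above is unavoidable.
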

We may now combine Corollary \ref{cor:C} and Theorem 
\ref{thm:E} with the equivalence $|\fM_h\tp|\simeq\bbB\Homeo^+(\Sigma)$, where $\Sigma$ is a closed oriented surface of genus $h$. We obtain for $2\le d\le 5$ a stable splitting of $H^*(|\cH_{g,d}\tp|;\bQ)$ in terms of the topological stacks $\Conf(\fp_h\tp,M_l)$ and $\Conf(\fp_{h,e}\tp,M_l)$
obtained from the maps $\fp_h$ and $\fp_{h,e}$ from Notation \ref{nota:universal_curve} by taking $M_l$-labeled configuration spaces fibrewise. The underlying  homotopy types of these topological stacks are the following:
\[
\begin{split}
|\Conf(\fp_h\tp,M_l)|&\simeq|\Conf(\Sigma,M_l)|\sslash\Homeo^+(\Sigma);\\
|\Conf(\fp_{h,e}\tp,M_l)|&\simeq |\Conf(\fp_h\tp,M_l)|\times_{|\fM_h\tp|}|\cH_{h,e}\tp|.
\end{split}
\]
\begin{acor}
\label{cor:F}
Let $g\ge0$; for $d=2,3,5$ we have stable isomorphisms of graded $\bQ$-vector spaces as follows:
\[
\begin{split}
H^*(|\cH_{g,2}\tp|;\bQ)&\cong H^*(|\Conf(\fp_0\tp,M_2)_{2g+2}|;\bQ);\\
H^*(|\cH_{g,3}\tp|;\bQ)&\cong H^*(|\Conf(\fp_0\tp,M_3)_{2g+4}|;\bQ);\\
H^*(|\cH_{g,5}\tp|;\bQ)&\cong \bigoplus_{a+2b=2g+8}H^{*-2b}(|\Conf(\fp_0\tp,M_5)_{a,b}|;\bQ),
\end{split}
\]
whereas for $d=4$ we have that $H^*(|\cH_{g,4}\tp|;\bQ)$ is stably isomorphic to
\[
\begin{split}
&\bigoplus_{h\ge0} H^{*-4(h+1)}(|\Conf(\fp_{h,2}\tp,M_2)_{2g+2-4h}|;\bQ)\\
\oplus& \bigoplus_{a+2b=2g+6}H^{*-2b}(|\Conf(\fp_0\tp,M_4)_{a,b}|;\bQ).
\end{split}
\]
Precisely, the stable isomorphism associated with $d=2,3,4,5$ holds in the range of cohomological degrees $*<(2(g+d-1)-J_d)/I_d$, .
\end{acor}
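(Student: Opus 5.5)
The plan is to combine Corollary \ref{cor:C} with Theorem \ref{thm:E} fibrewise over the relevant moduli stacks, and then evaluate the numerical constants $\rho$ and $\upsilon$ for each $d\in\{2,3,4,5\}$.

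\textbf{Divisors and constants.} First list the allowed pairs $(h,e)$ in \eqref{eq:put_together}:
\begin{itemize}
\item For $d=2,3,5$ (prime) only $e=1$ occurs. Since $\cH_{h,1}=\emptyset$ for $h\ge1$, only $h=0$ survives. Then $\rho(0,1,d)=0$ and $2\upsilon(g,d,0)=2(g-1+d)$, which gives the gradings $2g+2$, $2g+4$ and $2g+8$.
\item For $d=4$ we have $e\in\{1,2\}$. For $e=1$ again only $h=0$ occurs, with $\rho=0$ and grading $2(g+3)=2g+6$. For $e=2$ every $h\ge0$ occurs, with $l=d/e=2$, $\rho(h,2,4)=h+1$ (so the shift is $4(h+1)$), and $2\upsilon(g,2,h)=2(g-1-2(h-1))=2g+2-4h$.
\end{itemize}
These numbers match the statement.

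\textbf{Identifying the twisted coefficients.} Next, identify each twisted cohomology group with the cohomology of a labelled configuration space. By Theorem \ref{thm:E}, for each closed oriented surface $\Sigma$ of genus $h$ there is a $\Homeo^+(\Sigma)$-equivariant equivalence between $(\int_{|\Sigma|}\cA(l))_n$ and $|\Conf(\Sigma;M_l)_n|_*\bQ$ (respectively the shifted direct sum for $l=4,5$). Under $|\fM_h\tp|\simeq\bbB\Homeo^+(\Sigma)$, equivariant spectra correspond to parametrised spectra over $|\fM_h\tp|$. Hence $(\int_{|\fp_h\tp|}\cA(l))_n$ is equivalent to the fibrewise rational chains of the fibration $|\Conf(\Sigma;M_l)_n|\sslash\Homeo^+(\Sigma)\to|\fM_h\tp|$.

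For $(h,e)=(h,2)$ we pull back along $|\epsilon_{h,2}\tp|$, using the displayed homotopy pullback description of $|\Conf(\fp_{h,e}\tp,M_l)|$. Cohomology of the base with coefficients in the fibrewise chains of a fibration, taken as the mapping spectrum into $\bQ$ after pushing forward, equals the cohomology of the total space. This gives
\[
H^*\pa{|\cH_{h,e}\tp|;\int_{|\fp_{h,e}\tp|}\cA(l)}_n\cong H^*(|\Conf(\fp_{h,e}\tp,M_l)_n|;\bQ),
\]
and analogously for $l=4,5$, with the shifts $[-2b]$ becoming degree shifts $H^{*-2b}$. For $h=0$, $e=1$ the map $\epsilon_{0,1}$ is an equivalence $\cH_{0,1}\simeq\fM_0$, so we obtain $\Conf(\fp_0\tp,M_l)$.

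\textbf{Assembling.} Substituting into \eqref{eq:put_together} yields the stated isomorphisms in the range $*<(2(g+d-1)-J_d)/I_d$ supplied by Corollary \ref{cor:C}.

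The main obstacle is the bookkeeping of the passage from equivariant spectra to cohomology of Borel constructions. In particular, one must ensure that the equivalence of Theorem \ref{thm:E} is natural enough, that is, coherently equivariant as a functor of $\bbB\Homeo^+(\Sigma)$ and not only objectwise, to induce an equivalence of parametrised spectra. If Theorem \ref{thm:E} is proven at the level of functors on the groupoid of surfaces and homeomorphisms, this step is immediate; otherwise I would re-derive it by functoriality of factorisation homology in the surface.
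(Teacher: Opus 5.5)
Your proposal is correct and is essentially the paper's own argument: combine Corollary \ref{cor:C} with Theorem \ref{thm:E} over $|\fM_h\tp|\simeq\bbB\Homeo^+(\Sigma)$ (the paper already reads ``$\Homeo^+(\Sigma)$-equivariant'' as an equivalence of parametrised spectra over $\bbB\Homeo^+(\Sigma)$), pull back along $\epsilon_{h,e}$ via the displayed homotopy pullback, and evaluate $\rho$ and $\upsilon$, all of which you computed correctly. One small slip: in the paper's conventions $|\Conf(\Sigma;M_l)_n|_*\bQ$ denotes fibrewise \emph{cochains}, i.e.\ the pushforward $r_*\bQ$ along the Borel fibration $r$, not chains, so the twisted cohomology is directly $|\cH_{h,e}\tp|_*r_*\bQ$, which is the cochains of the total space by base change, with no dualisation step needed.
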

\begin{rem}
\label{rem:hyperelliptic}
For $h\ge2$ we remark that the stack $\cH_{h,2}$ can be identified with the hyperelliptic locus inside $\fM_h$ along the map $\epsilon_{h,2}\colon\cH_{h,2}\to\fM_h$; in particular this is an equivalence for $h=2$. For $h\le1$ the map $\epsilon_{h,2}$ has positive-dimensional fibres.

For arbitrary $l$ we also remark that $|\Conf(\fp_0\tp,M_l)|$ is equivalent to the homotopy quotient $|\Conf(S^2,M_l)|\sslash\Homeo^+(S^2)\simeq|\Conf(S^2,M_l)|\sslash\SO(3)$.
\end{rem}
\begin{rem}
\label{rem:Zheng}
For $d=2$ it is a classical result that $H^*(|\cH_{g,2}\tp|)\cong \bQ$ for any $g\ge1$, in particular we know the entire cohomology ring. For $d=3$, Zheng \cite{Zheng} proved that $H^*(|\cH_{g,3}\tp|)$ agrees with $\bQ[\kappa_1]/\kappa_1^3$ in degrees $*\le \lfloor g/4\rfloor$; here we denote by $\kappa_1$ the pullback of the first Mumford--Morita--Miller class along $|\epsilon_{g,3}\tp|\colon|\cH_{g,3}\tp|\to|\fM_g\tp|$. Corollary \ref{cor:F} partially recovers these results (see Subsection \ref{subsec:oddexamples}), but only in the stable ranges provided by the constants $I_d,J_d$, and only at the level of vector spaces: the multiplicative structure, as well as the the identification of the generators as powers of $\kappa_1$, can probably be recovered too with some additional effort. In particular, for $d=3$, Zheng's range $*<\lfloor g/4\rfloor$ is better than the current best available range relying on the work of Landesman--Levy.
\end{rem}
\begin{rem}
For $l\le5$ the algebra $\cA(l)$ can be generated with at most two elements and it admits a monomial presentation; this allows us to make a fairly simple analysis of its factorisation homology over a surface $\Sigma$ and obtain Theorem \ref{thm:D}. A complete presentation of $\cA(l)$ is known for $l\le10$ \cite{BCP:Hilb}; for $6\le l\le10$ we have that $\cA(l)$ is minimally generated by at least three elements and admits no monomial presentation, making the analysis of its factorisation homology harder.
\end{rem}
The cohomology groups from Corollary \ref{cor:F} belong to the classical realm of configuration spaces.
More precisely, for $n\ge0$ and $h\ge2$, $\Conf(\fp_{h,2}\tp,M_2)_n$ is the underlying topological stack of the algebraic stack of hyperelliptic curves of genus $h$ with $n$ unordered marked points; its cohomology, at least for stable $h$ and $n$, should be explicitly computable using the results of \cite{BDPW,MPPRW}. 
The other cohomology groups occurring in Corollary \ref{cor:F}, namely those related to $\fp_0$, are instead the $\SO(3)$ equivariant cohomology groups of configuration spaces of points on $S^2$ with prescribed upper bounds on multiplicities.
In this case, a generalisation of the results in the literature \cite{Segal, Vassiliev,CCMM,GKY1,GKY2,Kallel,Kamiyama} could even allow a complete computation. We content ourselves in Subsection \ref{subsec:oddexamples} with a couple of examples showing
in particular that non-trivial
odd-dimensional cohomology classes in $H^*(|\cH_{g,d}\tp|;\bQ)$ exist, for $g$ large enough, for both $d=4,5$. It would be interesting to compute explicitly all cohomology groups in Corollary \ref{cor:F}.
\begin{rem}
The presence of odd-dimensional classes implies that for $d=4,5$, the stable rational cohomology of $|\cH_{g,d}\tp|$ is not fully algebraic, i.e., not in the image of the cycle class map from the Chow ring. This is in stark contrast with the cases $d=2,3$, as seen in Remark \ref{rem:Zheng}, as well as with the situation for $\fM_g$, in which the stable rational cohomology is fully tautological \cite{MadsenWeiss}.

\end{rem}

We conclude the article with a possible strategy, explained in Subsection \ref{subsec:Picard}, to compute the rational Picard group of $\cH_{g,d}$ for all $d\ge4$ and for $g$ large enough (with respect to $d$), to be given by $
\Pic(\cH_{g,d})\otimes\bQ\cong\bQ^2$.
This strategy was informed by a discussion with Aaron Landesman, to whom I am thankful.

\subsection*{Conventions}
Throughout the article, for an integer $l\ge2$ we denote by $\fS_d$ the symmetric group of permutations of the set $\udll=\set{1,\dots,l}$. We usually denote by $\lambda\vdash l$ a partition of $l\ge2$ as a sum of unordered positive integers. The datum of $\lambda$ also determines a conjugacy class in $\fS_l$, spanned by permutations $\sigma$ with cycle type $\lambda$; we write $\sigma\in\lambda$ for such permutations.

We phrased the main problem in terms of algebraic (or Artin) stacks, but mostly work with their corresponding topological stacks and associated underlying homotopy types.
By ``space'' we mean a topological space, whereas by ``homotopy type'' we mean a topological space up to homotopy equivalent replacement.
All schemes and algebraic stacks will be over $\C$. By ``curve'' we usually mean a smooth, connected, projective 1-dimensional variety over $\C$. By ``surface'', in contrast, we usually mean a topological manifold, possibly with boundary, of real dimension 2.

The one point compactification of a space $X$ is denoted $X^\infty$, and the adjoined point at infinity is denoted $\infty$.
We denote by $|X|$ the underlying homotopy type of the topological space or topological stack $X$. 

We denote by $\Sp_\bQ$ the $\infty$-category of rational spectra, whose homotopy category is the classical derived category of rational chain complexes.
For a homotopy type $Y$ we denote by $\Sp^Y_\bQ=\Fun(Y,\Sp_\bQ)$ the $\infty$-category of $\Sp_\bQ$-valued local coefficient systems on $Y$, and we denote by
$Y_*,Y_!\colon\Sp_\bQ^Y\to\Sp_\bQ$ the limit and colimit functor, respectively, also known as ``cochains'' and ``chains''. Similarly, if $\iota\colon Z\to Y$ is a map of homotopy types, then $\iota^*\colon\Sp^Y_\bQ\to\Sp^Z_\bQ$ denotes the restriction functor, and
$(Z,Y)_!=\mathrm{cofib}(Y_!\iota^*\to Z_!)$ denotes the ``relative chains'' functor.

\subsection*{Acknowledgments}
This article was born as a result of fruitful discussions with people met on the occasion of two conferences, having taken place in Cetraro and in Cortona in June 2026. I'm thankful to the organisers of those conferences for providing such a fantastic environment to discuss and learn new mathematics: Gabriel Farkas, Margherita Lelli Chiesa; Karl Christ, Ciro Ciliberto,
Sam Molcho, Edoardo Sernesi,
Sara Torelli. A special thank goes to Aaron Landesman, for many fruitful discussions as well as detailed corrections on a first draft of this article.
I would then also like to thank Luca Battistella, Samir Canning, Simon Gritschacher, Hannah Larson, Ishan Levy, Tommaso Rossi and Angelina Zheng for fruitful discussions. 

Basic AI chatbots that are freely available online have been used for learning, drawing pictures, and searching the literature; the novel mathematical content of the article is solely the fruit of the work of the author.

\tableofcontents
\section{Hurwitz stacks and topological branched covers}
\label{sec:cHgd}

\subsection{Algebraic and topological Hurwitz stacks}

In the following definition we generalise the algebraic stack $\cH_{g,d}$ from the introduction.
\begin{defn}
\label{defn:algHurstack}
Recall Notation \ref{nota:universal_curve}.
For $g,h\ge0$ and $l\ge1$ we denote by
$\cH_{g,l}(\fp_h)$
the moduli stack of degree $l$ maps $\theta\colon C\to D$ between smooth connected complex projective curves, with $C$ of genus $g$ and $D$ of genus $h$. When $h=0$ we abbreviate $\cH_{g,l}:=\cH_{g,l}(\fp_0)$, as in the introduction.

We denote by $q_{g,l,h}\colon \cH_{g,l}(\fp_h)\to\fM_h$ the forgetful map sending $(\theta\colon C\to D)\mapsto D$.
It is smooth, and its fibre over a curve $D$ of genus $h$ is the moduli stack $\cH_{g,l}(D)$ of genus $g$ degree $l$ covers of $D$.

For a family $\fp\colon \cE\to\cB$ of curves of genus $h$ over a stack $\cB$, classified by a map $\epsilon\colon \cB\to \fM_h$, we denote by $\cH_{g,l}(\fp)$ the fibre product
\[
\cH_{g,l}(\fp)=\cH_{g,l}(\fp_h)\times_{\fM_h} \cB,
\]
computed along the maps $\epsilon$ and $q_{g,l,h}$.
\end{defn}

\begin{ex}
\label{ex:cHglhe}
For $e\ge1$ and $h\ge0$ we may consider, as in Notation \ref{nota:universal_curve}, the map $\fp_{h,e}\colon\cC_{h,e}\to\cH_{h,e}$ classified by the map $\epsilon_{h,e}\colon\cH_{h,e}\to\fM_h$. For $g\ge0$ and $l\ge1$ we then have that $\cH_{g,l}(\fp_{h,e})$ is the moduli stack of sequences of maps $C\to D\to L$ of curves of genera $g,h,0$ respectively, with $C\to D$ and $D\to L$ of degrees $l$ and $e$ respectively.
\end{ex}
\begin{nota}
\label{nota:qglhe}
In the setting of Example \ref{ex:cHglhe} we denote by $q_{g,l,h,e}\colon\cH_{g,l}(\fp_{h,e})\to\cH_{h,e}$ the map sending $(C\to D\to L)\mapsto (D\to L)$.
\end{nota}
\begin{rem}
\label{rem:cHgdDMstack}
The case $l=1$ is degenerate and only included in Definition \ref{defn:algHurstack} for uniform notation: the stack $\cH_{g,1}(\fp_h)$ is non-empty if and only if $g=h$, in which case it is isomorphic to $\fM_h$ along $q_{g,1,h}$.

For $l\ge2$, the stack $\cH_{g,l}(\fp_h)$ is non-empty if and only if the number $\upsilon(g,l,h)$ from Notation \ref{nota:upsilon} is $\ge0$. In this case $2\upsilon(g,l,h)$ counts, with multiplicity, the number of branch values of a degree $l$ map $\theta\colon C\to D$ from a genus $g$ curve to a genus $h$ curve.

We also observe that a degree $l$ map $\theta\colon C\to D$ between curves of genera $g$ and $h$ admits finitely many automorphisms unless either $g=h\le1$. Consequently, $\cH_{g,l}(\fp_h)$ is a Deligne--Mumford stack whenever $g\ge2$, or $g=1$ and $h=0$.

For a curve $D$ of genus $h$, the stack $\cH_{g,l}(D)$ is a (possibly empty) Deligne--Mumford stack for all $g\ge0$ and $l\ge1$.
\end{rem}
\begin{rem}
This remark is the fruit of a conversation with Aaron Landesman.
If $d$ is a prime number and $g>(d-1)^2$, then the map $\epsilon_{g,d}\colon\cH_{g,d}\to\fM_g$ is an inclusion, with image the locus of $d$-gonal curves, i.e. those curves of genus $g$ admitting some degree $d$ map to a genus 0 curve, without a specific choice of this datum. In other words, a degree $d$ map $\theta\colon C\to \bP^1$, if it exists, is unique up to automorphism of $\bP^1$.

To see this, we observe that if $\theta_1,\theta_2\colon C\to \bP^1$ are two maps of degree $d$, they combine into a map $\theta_1\times\theta_2\colon C\to \bP^1\times\bP^1$. The image of this map is an irreducible, but possibly singular curve $C'\subseteq\bP^1\times\bP^1$ of bidegree $(e,e)$ for some divisor $e\mid d$, and the map $\theta_1\times\theta_2\colon C\to C'$ has degree $d/e$. If $e=1$ then $C'$ is the graph of an automorphism of $\bP^1$, which means that $\theta_1$ and $\theta_2$ differ by an automorphism of $\bP^1$. If $e=d$ then $C$ is the normalisation of $C'$, and $C'$, being a (possibly singular) curve of bidegree $(d,d)$ in $\bP^1\times\bP^1$, has genus $\le(d-1)^2$.
\end{rem}
We next recast the study of \'etale cohomology of the algebraic stack $\cH_{g,d}$ in terms of topological stacks and their underlying homotopy types. Recall that Noohi 
\cite[Section 20]{Noohi05} associates with every algebraic stack $\cB$ locally of finite type over $\C$ a topological stack $\cB\tp$; examples of such $\cB$ are the Hurwitz stacks from Definition \ref{defn:algHurstack}, . Concretely, given a groupoid presentation $R\rightrightarrows U$ of $\cB$, i.e. $U\to\cB$ is a smooth atlas and $R=U\times_\cB U$, we extract the underlying analytic spaces $R\tp\rightrightarrows U\tp$ and let $\cB\tp$ be the quotient topological stack. Also following Noohi \cite{Noohi12}, with any topological stack $X$ we can associate an underlying homotopy type $|X|$ (this is referred to as ``classifying space'' in Noohi's work). Concretely, we present $X$ by a groupoid object in spaces $R\rightrightarrows U$, form the simplicial topological space having space of $p$-simplices the $p$-fold iterated fibre product $R\times_UR\times_U\dots\times_UR$, pass to the corresponding simplicial homotopy type, having $|R\times_UR\times_U\dots\times_UR|\simeq |R|\times_{|U|}|R|\times_{|U|}\dots\times_{|U|}|R|$ as homotopy type of $p$-simplices, and compute the geometric realisation of the latter.

The comparison theorem \cite[Théorème 4.1]{SGA4IIIxvi} (see \cite[Corollary 5.14]{Carchedi} for a result covering the case of algebraic stacks) asserts that if $\cB$ is an algebraic stack locally of finite type over $\C$, and $\ell$ is a prime number, then there is an isomorphism between the $\ell$-adice \'etale cohomology of $\cB$ and the singular cohomology of $|\cB\tp|$ with coefficients in $\bQ_\ell$:
\[
H^*_{\text{\'et}}(\cB;\bQ_\ell)\cong H^*(|\cB\tp|;\bQ_\ell);
\]
if $\cB$ is actually of finite type, then all (integral) cohomology groups of $|\cB\tp|$ are finitely generated, and therefore $H^*(|\cB\tp|;\bQ_\ell)$ is actually isomorphic to the tensor product $H^*(|\cB\tp|;\bQ)\otimes_\bQ\bQ_\ell$. This may be applied for instance to $\cB=\cH_{g,d}$.
\begin{nota}
For an oriented closed topological surface $\Sigma$, i.e. a compact 2-dimensional topological manifold without boundary, we denote by $\Homeo^+(\Sigma)$ the topological group of orientation-preserving homeomorphisms of $\Sigma$, endowed with the compact-open topology.
\end{nota}

\begin{ex}
To see how this double step sheds topological light into our problem, let us first remind what happens for the algebraic stack $\fM_g$ for $g\ge0$. The topological stack $\fM_g\tp$ associates with a topological space the groupoid $\fM_g\tp(X)$ of smooth oriented fibre bundles $p\colon C\to X$ with fibre a (real) surface of genus $g$, together with a continuous choice of Riemann structures on fibres; morphisms are given by bundle isomorphisms that are fibrewise holomorphic. If we further pass to the underlying homotopy type $|\fM_g\tp|$, we obtain the homotopy type $\bbB\Homeo^+(\Sigma)$ of the classifying space of $\Homeo^+(\Sigma)$, where $\Sigma$ is a closed oriented topological surface of genus $g$; this is also a classifying space for fibre bundles with genus $g$ oriented surfaces as fibres.
\end{ex}

\subsection{Topological branched covers}
In this subsection we start the analysis of the underlying topological stack and homotopy type of $\cH_{g,l}(\fp_h)$, and thereby introduce the topological stacks of topological branched covers from \cite{DasPetersen}.

We start by observing that the topological stack $\cH_{g,l}(\fp_h)\tp$ associates with a space $X$ the groupoid with the following objects and morphisms:
\begin{itemize}
\item an object in $\cH_{g,l}(\fp_h)\tp(X)$ is the datum of two smooth oriented fibre bundles $C\to X$ and $D\to X$ with fibres real surfaces of genera $g$ and $h$, respectively; moreover, fibres are continuously endowed with Riemann structures; moreover, a map $\theta\colon C\to D$ over $X$ is given, and $\theta$ is fibrewise holomorphic and of degree $c$;
\item a morphism in $\cH_{g,l}(\fp_h)\tp(X)$ is  a commutative diagram over $X$ as follows, in which all maps are fibrewise holomorphic:
\[
\begin{tikzcd}
C\ar[r,"\theta"]\ar[d,"\cong"]&D\ar[d,"\cong"]\\
C'\ar[r,"\theta'"]&D'.\\
\end{tikzcd}
\]
\end{itemize}
The description of the underlying homotopy type $|\cH_{g,l}(\fp_h)\tp|$ is best given in terms of the map $q_{g,l,h}$ from Definition \ref{defn:algHurstack}. The fibre of $q_{g,l,h}$ over $D\in\fM_h$ is $\cH_{g,l}(D)$. Passing to topological stacks, we obtain a smooth map $q_{g,l,h}\tp\colon\cH_{g,l}(\fp_h)\tp\to\fM_h\tp$ with fibre $\cH_{g,l}(D)\tp$.

Consider now the topological surface $\Sigma$ underlying $D\in\fM_h$, i.e. $\Sigma=D\tp$ is the analytic space associated with the complex variety $D$. Then we may identify the topological stack $\cH_{g,l}(D)\tp$ with a substack of the topological stack $\Bra^l_\Sigma$ from \cite[§ 4]{DasPetersen}, whose definition we recall here in a generality which is suitable for the purposes of this article.
\begin{defn}
\label{defn:Bra}
Let $\Sigma$ be a closed connected oriented topological surface and let 
$A\subseteq B\subseteq \Sigma$ be finite subsets, with $\#A\le1$.
Let $l\ge2$. The topological stack 
$\Bra^l_{\Sigma\rel A\et B}$ 
associates with a space $X$ the groupoid whose objects are sequences $(Y,p,\theta,\bt)$ where $Y$ is a space, $p\colon Y\to X$ and $\theta\colon Y\to\Sigma$ are maps, and $\bt\colon X\times A\times\udll\cong \theta^{-1}(A)$ is a homeomorphism, satisfying the following properties:
\begin{itemize}
\item $p$ is a fibre bundle whose fibres are possibly disconnected closed surfaces;
\item for each $x\in X$ the restricted map $\theta\colon p^{-1}(x)\to \Sigma$ is a topological $l$-fold branched cover; that is, it is proper \'etale of degree $l$ away from a finite subset of $p^{-1}(x)$ of \emph{branch points}, and it can be modeled up to local homeomorphism by the map $\C\to\C$, $z\mapsto z^r$ near the branch points for some $2\le r\le l$; we require moreover that $p^{-1}(x)\to\Sigma$ is \'etale over $B$;
\item the composition $p\circ\bt\colon X\times A\times\udll\to X$
agrees with the product projection.
\end{itemize}
Morphisms in $\Bra_{\Sigma\rel A\et B}^l(X)$ are homeomorphisms $Y\cong Y'$ over $X\times \Sigma$ and under $X\times A\times\udll$.
When $A=\emptyset$ we remove $A$ and $\bt$ from the notation, whereas when $A=\set{a}$ we write $\rel a$. Similarly, when $A=B$ we remove $B$ from the notation.

We denote by $\CBra^l_{\Sigma\rel A\et B}\subseteq\Bra^l_{\Sigma\rel A\et B}$ the topological substack spanned at $X$ by those sequences $(Y,p,\theta,\bt)$ such that $p$ has connected fibres.
For $g\ge0$ we denote by $\CBra^{g,l}_{\Sigma\rel A\et B}\subseteq\CBra^l_{\Sigma\rel A\et B}$ the topological substack spanned at $X$ by those sequences $(Y,p,\theta,\bt)$ such that fibres of $p$ are connected genus $g$ surfaces.
\end{defn}
For a connected smooth complex projective curve $D$ with underlying topological surface $\Sigma$ of genus $h$,
we may now identify the topological stacks $\cH_{g,l}(D)\tp$ and $\CBra_\Sigma^{g,l}$: the crucial observation is that the Riemann structures on fibres of $p$ may be transfered up along $\theta$ from the Riemann structure on $D$, thereby simultaneously forcing $\theta$ to be fibrewise holomorphic and not only fibrewise a topological $l$-fold branched cover.

Now there is a natural action of the topological group $\Homeo^+(\Sigma)$ on the topological stack $\Bra^l_\Sigma$: loosely, given a space $X$, a map $s\colon X\to\Homeo^+(\Sigma)$ acts on a triple $(Y,p,\theta)\in\Bra^l_\Sigma(X)$ by sending it to the triple $(Y,p,\theta')$, where $\theta'\colon Y\to\Sigma$ is the composite $Y\xrightarrow{p\times\theta}X\times\Sigma\to\Sigma$, using the adjoint of $s$ as second map.
The substack $\CBra^{g,l}_\Sigma$ is $\Homeo^+(\Sigma)$-invariant, and the fibre sequence of homotopy types
\[
|\cH_{g,l}(D)\tp|\hookrightarrow|\cH_{g,l}(\fp_h)|\xrightarrow{|q_{g,l,h}\tp|}|\fM_h\tp|
\]
may be identified with the fibre sequence
\[
|\CBra^{g,l}_\Sigma|\hookrightarrow|\CBra^{g,l}_\Sigma|\sslash\Homeo^+(\Sigma)\to \bbB\Homeo^+(\Sigma).
\]
If $\epsilon\colon\cB\to\fM_h$ is a map of algebraic stacks, we may pull back the previous fibre sequence; in particular for $\epsilon_{h,e}\colon\cH_{h,e}\to\fM_h$ we obtain a fibre sequence
\[
|\CBra^{g,l}_\Sigma|\hookrightarrow|\cH_{g,l}(\fp_{h,e})\tp|\to|\cH_{h,e}\tp|.
\]

\subsection{Configurations with monodromy}
In this subsection, for a fixed 
connected closed oriented surface $\Sigma$ of genus $h$ and a point $a\in\Sigma$, we deepen our study of the topological stack $\Bra^l_{\Sigma\rel a}$ and its substacks $\CBra^{g,l}_{\Sigma\rel a}$.
Since this is a direct generalisation of constructions and arguments from \cite{Bianchi:Hur1,Bianchi:Hur2,Bianchi:Hur4,DasPetersen}, the discussion will be brief.

We start with the crucial observation that
the Deligne--Mumford topological stack $\Bra^{l}_{\Sigma\rel a}$ is a topological space.
To see this,
it suffices to show that the groupoid $\Bra^l_{\Sigma\rel a}(*)$ is discrete. Indeed, if $(Y,p,\theta,\bt)\in \CBra^{g,l}_{\Sigma\rel a}(*)$, then $Y$ is a closed surface and $\theta\colon Y\to\Sigma$ is a degree $l$ branched cover which is \'etale and trivialised over $a$. Such a cover may not admit non-trivial automorphisms. Compare also with \cite[§4.1]{DasPetersen}.
Our next goal is to explicitly describe the points and the topology of the space $\Bra^{l}_{\Sigma\rel a}$, in terms of monodromy data.
\begin{defn}
\label{defn:smallloop}
Given a finite subset $P\subseteq \Sma$, a \emph{small loop} is an element in $\pi_1(\SmP;a)$ whose conjugacy class is represented by an unbased (or free) simple loop $\gamma$ in $\SmP$ that spins once, clockwise\footnote{With respect to the orientation of $\Sigma$.}, around a single point of $P$.
\end{defn}

\begin{defn}
\label{defn:norm}
The \emph{norm} of a permutation $\sigma\in\fS_l$, also known as its \emph{absolute length}, is the minimum $r\ge0$ such that $\sigma$ may be written as a product of $r$ transpositions.
Equivalently, if $\sigma$ consists of $m$ cycles, then $N(\sigma)=l-m$. 
\end{defn}
We observe that the norm is conjugation invariant, satisfies the triangular inequality, and only vanishes at the identity permutation $\one\in\fS_l$.
\begin{nota}
\label{nota:fP}
We denote by $\fP(l)$ the set of all partitions $\lambda\vdash l$, i.e. the set of conjugacy classes in $\fS_l$.
For $\lambda\in\fP(l)$ we denote by $N(\lambda)=N(\sigma)$ for any $\sigma\in\lambda$. We let $\fP(l)_+\subseteq\fP(l)$ denote the subset of non-discrete partitions, or equivalently, the set of conjugacy class of non-identity elements in $\fS_l$.
\end{nota}
\begin{nota}
\label{nota:Nzphi}
Let $P\subseteq\Sma$ be a finite subset, let $\phi\colon\pi_1(\SmP;a)\to\fS_l$ be a group homomorphism, and let $z\in P$. We write $N(z,\phi):=N(\phi(\gamma))\ge0$ for any small loop $\gamma$ spinning around $z$. We observe that up to conjugation $\phi(\gamma)$ only depends on $z$, hence also $N(z,\phi)$ only depends on $z$. 
\end{nota}

Given a point $(Y,p,\theta,\bt)$ in $\CBra^{g,l}_{\Sigma\rel a}$, we may let $P\subseteq\Sma$ denote the finite set of branch values of $\theta\colon Y\to\Sigma$
and let $\phi\colon\pi_1(\SmP;a)\to\fS_l$ denote the monodromy group homomorphism, which is defined using the trivialisation $\bt\colon\theta^{-1}(a)\cong\udll$. 
Viceversa, let $(P,\phi)$ be a pair consisting of a finite subset $P\subseteq \Sma$ and a group homomorphism $\phi\colon\pi_1(\SmP;a)\to\fS_l$ satisfying the following property:
\begin{itemize}
\item each small loop is sent along $\phi$ to a non-trivial permutation.
\end{itemize}
Then we may recover $(Y,p,\theta,\bt)$ from $(P,\phi)$ up to unique isomorphism, and $P$ is indeed the set of branch values of $\theta$. 
The condition $(Y,p,\theta,\bt)\in\CBra^l_{\Sigma\rel a}$ translates into the following condition on $(P,\phi)$:
\begin{itemize}
\item the image of $\phi$ is a subgroup of $\fS_l$ acting transitively on $\udll$;
\end{itemize}
whereas the further condition 
$(Y,p,\theta,\bt)\in\CBra^{g,l}_{\Sigma\rel a}$ 
translates into the following further condition on $(P,\phi)$, using Notation \ref{nota:upsilon}:
\begin{itemize}
\item $\sum_{z\in P}N(z,\phi)=2\upsilon(g,l,h)$.
\end{itemize}

We next describe the topology on $\Bra^l_{\Sigma\rel a}$ from the point of view of pairs $(P,\phi)$ as above.
Intruitively, 
the topology makes sure that points in $P$ can freely move around each other in $\Sma$ while ``dragging'' the monodromy $\phi$; and they may even collide in clusters, but only under suitable ``geodesic'' circumstances.

\begin{defn}
\label{defn:fUsubset}
Let $(P,\phi)\in\Bra^l_{\Sigma\rel a}$, expand $P=\set{z_1,\dots,z_k}$, and let $\uU=(U_1,\dots,U_k)$ be open discs in $\Sma$ with disjoint closures, with $U_i$ intersecting $P$ only in $z_i$. We associate with the data $P,\phi,\uU$ the subset
\[
\mathfrak{U}(P,\phi,\uU)\subseteq\Bra^l_{\Sigma\rel a}
\]
consisting of all pairs $(P',\phi')$ satisfying the following properties:
\begin{enumerate}
\item $P'\subseteq U_1\cup\dots\cup U_k$ and $P'$ interesects each $U_i$ in at least one point;
\item the following composite group homomorphism
agrees with $\phi$:
\[
\pi_1(\SmP;a)\overset{\cong}{\leftarrow}\pi_1(\Sigma\setminus\uU;a)\to\pi_1(\SmP';a)\xrightarrow{\phi'}\fS_l;
\]
\item for all $1\le i\le k$ the following equality holds:
\[
N(z_i,\phi)=\sum_{z'\in P'\cap U_i}N(z',\phi').
\]
\end{enumerate}

\end{defn}

Note that the inequality ``$\le$'' always holds in (3) in Definition \ref{defn:fUsubset}, as a consequence of the triangular inequality enjoyed by the norm $N$. The sets $\fU(P,\phi,\uU)$ form an open basis for the topology of $\Bra^l_{\Sigma\rel a}$.

\subsection{Orbifold structure}
The goal of this subsection is to prove the following proposition. \begin{prop}
\label{prop:orbifold}
Let $\Sigma$ be a connected oriented closed surface of genus $h\ge0$, let $l\ge2$ and $g\ge0$, and let $A\subseteq B\subseteq\Sigma$ be finite subsets as in Definition \ref{defn:Bra}. Recall Notation \ref{nota:upsilon}.
Then the topological stack $\CBra^{g,l}_{\Sigma\rel A\et B}$ 
is
\begin{enumerate}
\item empty, if $\upsilon(g,l,h)<0$;
\item an oriented orbifold of real dimension $4\upsilon(g,l,h)$, if $\upsilon(g,l,h)\ge0$; 
\item a manifold, if moreover $A\neq\emptyset$.
\end{enumerate}
\end{prop}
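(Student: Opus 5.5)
We first reduce to the case $A=\set{a}$, and then treat the case $A=\emptyset$ as a quotient of that case.

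\textbf{Reduction to one marked point.} If $A=\emptyset$, pick a point $a\in\Sigma\setminus B$. The forgetful map $\CBra^{g,l}_{\Sigma\rel a\et B\cup\set{a}}\to\CBra^{g,l}_{\Sigma\et B\cup\set a}$ discards the trivialisation $\bt$. It exhibits the target as the quotient stack by the free-on-the-source action of $\fS_l$ relabelling $\udll$. The substack $\CBra^{g,l}_{\Sigma\et B\cup\set a}\subseteq\CBra^{g,l}_{\Sigma\et B}$ is open, since étaleness over $a$ is an open condition. Moving $a$ around, these open substacks cover $\CBra^{g,l}_{\Sigma\et B}$: each cover has finitely many branch values, so some $a$ avoids them. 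Hence (2) for $A=\emptyset$ follows from (3) with $A=\set a$, once we check that the $\fS_l$-action is by orientation-preserving homeomorphisms. The action has finite stabilisers, namely the automorphisms of the cover, so the quotient is an oriented orbifold. Similarly, for $A=\set a$ the condition ``étale over $B$'' cuts out an open subspace of $\CBra^{g,l}_{\Sigma\rel a}$. It therefore suffices to prove that $\CBra^{g,l}_{\Sigma\rel a}$, which we already know is a space, is empty when $\upsilon<0$ and is otherwise an oriented topological manifold of dimension $4\upsilon(g,l,h)$.

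\textbf{Emptiness.} Emptiness for $\upsilon<0$ is Riemann--Hurwitz. The condition $\sum_{z\in P}N(z,\phi)=2\upsilon(g,l,h)$ from the dictionary with pairs $(P,\phi)$ has a nonnegative left-hand side.

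\textbf{Local charts.} Fix a point $(P,\phi)$ with $P=\set{z_1,\dots,z_k}$, and take discs $\uU$ as in Definition \ref{defn:fUsubset}. The basic open set $\fU(P,\phi,\uU)$ decomposes as a product $\prod_i \fU_i$. Here $\fU_i$ is the space of pairs $(P'_i,\phi'_i)$ where $P'_i\subseteq U_i$ and $\phi'_i\colon\pi_1(U_i\setminus P'_i)\to\fS_l$ has boundary monodromy $\sigma_i:=\phi(\gamma_i)$, subject to the geodesic condition $\sum N=N(\sigma_i)$. By van Kampen, gluing these local data along the fixed monodromy of $\Sigma\setminus\uU$ is a bijection, and it is compatible with the basic opens. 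Each $\fU_i$ is exactly the space of branched covers of a disc with fixed boundary monodromy $\sigma_i$ and total norm $N(\sigma_i)$. Equivalently, it is the space of degree-$l$ covers of $\C$ totally described by minimal factorisations of $\sigma_i$. Decompose $\sigma_i$ into cycles of lengths $r_1,\dots,r_m$. Geodesic factorisations of $\sigma_i$ split over its cycles, because non-crossing partitions of different cycles commute. So $\fU_i\cong\prod_j V_{r_j}$, where $V_r$ is the space of branched covers of a disc by a disc, with boundary monodromy an $r$-cycle and total norm $r-1$ (a genus-0 connected cover).

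\textbf{The key identification.} We identify $V_r$, a neighbourhood of the polynomial $z^r$, with the space of monic centred-free polynomials $z^r+c_{r-2}z^{r-2}+\dots+c_0$ modulo nothing. More precisely, $V_r$ is homeomorphic to an open subset of $\C^{r-1}$, via the Riemann existence theorem / Thom's theorem that the space of polynomial maps of degree $r$ up to translation in the source is parametrised by coefficients. Concretely, the map $c\mapsto$ (branch values, monodromy of $z\mapsto p_c(z)$) is a continuous bijection onto $V_r$. This uses the existence and uniqueness of the polynomial realising a planar cover with a point of full ramification at infinity. It is a local homeomorphism, by properness and the Lyashko--Looijenga map being finite. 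This identification is the main obstacle: one must check that the basic-open topology on $V_r$ agrees with the coefficient topology. The strategy is to show that the Lyashko--Looijenga map $\C^{r-1}\to\mathrm{Sym}^{r-1}(\C)$ and the branch-value map $V_r\to\mathrm{Sym}^{r-1}(\C)$ are both finite branched covers with the same fibres, matched by monodromy.

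Granting this, we get charts $\fU(P,\phi,\uU)\cong$ an open subset of $\C^{\sum_i N(\sigma_i)}=\C^{2\upsilon}$, which has real dimension $4\upsilon$. Transition maps are homeomorphisms between open sets. They are orientation-preserving because they are generically holomorphic: in the locus of simple branch values, the charts are given by branch-value coordinates. Hence the manifold is oriented.
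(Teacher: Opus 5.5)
Your proposal is correct and has the same skeleton as the paper's proof. You reduce to $A=\{a\}$ using that the \'etale condition over $B$ is open, and get (1) from Riemann--Hurwitz. You use van Kampen to write the basic open $\fU(P,\phi,\uU)$ as a product over $i$ of local pieces with boundary monodromy $\sigma_i$. You obtain (2) as the quotient by the $\fS_l$-action relabelling the fibre over $a$.

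The difference is in how each local piece is shown to be a manifold of real dimension $2N(\sigma_i)$. The paper identifies these pieces with open subsets of $\Hur^\Delta(\fS_l\geo)_{\sigma_i}$, via a generalisation of \cite[Proposition 5.1]{Bianchi:Hur2}. It then cites \cite[Theorem B]{Bianchi:Hur4}, which supplies a natural complex structure and hence the orientation at no cost. You instead prove this by hand: you split further over the cycles of $\sigma_i$ and identify each cycle factor with monic centred polynomials through the Lyashko--Looijenga map. This makes your argument self-contained, at the price of a few points you should state more carefully.

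\begin{itemize}
\item \emph{Splitting over cycles.} Take $\sigma_i=\tau_1\cdots\tau_m$ with $\sum_jN(\tau_j)=N(\sigma_i)$. The group generated by the $\tau_j$ has at least $l-\sum_jN(\tau_j)=l-N(\sigma_i)$ orbits, and each orbit is a union of cycles of $\sigma_i$. Hence its orbits are exactly the cycles of $\sigma_i$, so every $\tau_j$ preserves each cycle. This, not ``non-crossing partitions commute'', is why the factorisations split.
\item \emph{Polynomial bijection.} The monic centred normalisation leaves the ambiguity $p(z)\mapsto p(\zeta z)$ with $\zeta^r=1$. You must match it with the choice of trivialisation compatible with the fixed boundary $r$-cycle; both are torsors under the cyclic group generated by that cycle. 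The homeomorphism then follows from continuity of the map, properness of the Lyashko--Looijenga map, and Hausdorffness of $V_r$.
\item \emph{Orientation.} ``Generically holomorphic'' is literally true only if you fix a complex structure on $\Sigma$ and take the $U_i$ to be holomorphic coordinate discs. The robust version: each chart is orientation-compatible with branch-value coordinates on the dense locus of distinct simple branch values, and the local degree of a homeomorphism is locally constant. The same argument gives that the $\fS_l$-action preserves orientation, which you announced but did not check.
\end{itemize}
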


\begin{proof}
Since $\CBra^{g,l}_{\Sigma\rel A\et B}$ is an open topological substack of $\CBra^{g,l}_{\Sigma\rel A}$, it suffices to consider the case $A=B$.

We first address point (1): if $\upsilon(g,l,h)=g-1-(h-1)l<0$, then the Riemann--Hurwitz formula prevents the existence of a degree $l$ branched cover map between oriented topological closed surfaces of genera $g$ and $h$. From now on we assume $\upsilon(g,l,h)\ge0$, and abbreviate $\upsilon=\upsilon(g,l,h)$.

We next address point (3): let $A=\set{a}$, let $(P,\phi)\in\Bra^l_{\Sigma\rel a}$, expand $P=\set{z_1,\dots,z_k}$ and let $\uU=(U_1,\dots,U_k)$ as in Definition \ref{defn:fUsubset}. Fix $k$ small loops $\gamma_1,\dots,\gamma_k\subseteq\SmU$ based at $a$, with $\gamma_i$ spinning clockwise around $U_i$. Let $\sigma_i=\phi(\gamma_i)\in\fS_l$, and consider $\sigma_i$ as an element of the partially multiplicative quandle $\fS_l\geo$ from \cite[Definition 7.1]{Bianchi:Hur1}, as well as an element of the multiplicative completion $\widehat{\fS_l\geo}$ of the latter. Then a straightforward generalisation of \cite[Proposition 5.1]{Bianchi:Hur2} identifies $\fU(P,\phi,\uU)$ up to homeomorphism with an open subset of the product $\prod_{i=1}^k\Hur^\Delta(\fS_l\geo)_{\sigma_i}$; here $\Hur^\Delta(\fS_l\geo)$ is the Hurwitz space with collisions from \cite[Definition 6.13]{Bianchi:Hur1}, and 
$\Hur^\Delta(\fS_l\geo)_{\sigma_i}$ is the connected component corresponding to $\sigma_i\in\widehat{\fS_l\geo}$. 
By \cite[Theorem B]{Bianchi:Hur4} we then have that $\Hur^\Delta(\fS_l\geo)_{\sigma_i}$ is naturally an open complex manifold of complex dimension $N(\sigma_i)=N(z_i,\phi)$, in particular it carries a canonical orientation. It follows that $\fU(P,\phi,\uU)$, and hence the entire $\CBra^{g,l}_{\Sigma\rel a}$, is naturally a
topological oriented manifold of real dimension $4\upsilon=2\sum_{i=1}^kN(z_i,\phi)$.

We finally address point (2) following \cite[§4.4]{DasPetersen}; it suffices to consider the case $A=\emptyset$.
For varying $a\in\Sigma$, the substacks $\CBra^{g,l}_{\Sigma\et a}$ give an open cover of $\CBra^{g,l}_\Sigma$. We may now consider the action of $\fS_l$ on the manifold $\CBra^{g,l}_{\Sigma\rel a}$ given as follows: a permutation $\sigma\in \fS_l$ sends a point $(Y,\theta,\bt)\in\CBra^{g,l}_{\Sigma\rel a}$ to $(Y,p,\theta,\sigma\circ\bt)$. The action is orientation preserving, and the quotient orbifold may be identified with $\CBra^{g,l}_{\Sigma\et a}$, which is in particular indeed an oriented orbifold.
\end{proof}
\begin{cor}
\label{cor:Braorbifold}
Let $\Sigma$ be a closed connected oriented surface of genus $h\ge0$, and let $g\ge0$ and $l\ge2$ be such that $\upsilon(g,l,h)\ge0$. Then the coarse space $\CBra^{g,l}_{\Sigma,\coarse}$ associated with the Deligne--Mumford topological stack $\CBra^{g,l}_\Sigma$ is an oriented rational homology manifold of dimension $4\upsilon(g,l,h)$.

In particular, by Poincar\'e--Lefschetz duality, we have a $\Homeo^+(\Sigma)$-equivariant equivalence of rational spectra
\[
|\CBra^{g,l}_{\Sigma,\coarse}|_*\bQ\simeq(|\CBra^{g,l,\infty}_{\Sigma,\coarse}|,\infty)_!\bQ\ [-4\upsilon(g,l,h)].
\]
\end{cor}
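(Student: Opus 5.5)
The plan is to deduce both assertions from Proposition \ref{prop:orbifold}: first pass from the oriented orbifold $\CBra^{g,l}_\Sigma$ to its coarse space, then apply Poincar\'e duality for rational homology manifolds, and finally keep track of the $\Homeo^+(\Sigma)$-action.

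\emph{Coarse space.} By the proof of Proposition \ref{prop:orbifold}, $\CBra^{g,l}_\Sigma$ is covered by the open substacks $\CBra^{g,l}_{\Sigma\et a}$ for $a\in\Sigma$. Each of these is the quotient stack of the oriented topological manifold $M_a:=\CBra^{g,l}_{\Sigma\rel a}$, of dimension $4\upsilon$, by the finite group $\fS_l$ acting by orientation-preserving homeomorphisms. Hence the coarse space is covered by the open subsets $M_a/\fS_l$.

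I then need the standard fact that a quotient of an oriented topological $n$-manifold by a finite group acting preservingly on orientation is an oriented rational homology $n$-manifold. To check it, take a point $x\in M_a$ with stabiliser $G_x$ and a $G_x$-invariant neighbourhood $V$. Transfer gives
\[
H_*(V/G_x,V/G_x\setminus\bar x;\bQ)\cong H_*(V,V\setminus G_x x;\bQ)_{G_x}.
\]
Shrinking $V$ to be a small invariant neighbourhood of $x$, this equals $H_*(V,V\setminus x;\bQ)_{G_x}$. The latter is $\bQ$ in degree $n$ and zero otherwise, because $G_x$ acts trivially on the local orientation class. The local orientation classes glue to a global one, since the group action is orientation preserving on every chart and the gluings between charts come from the stack structure.

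The step I expect to be most delicate is this last point. One has to make sure the coarse space is a genuine topological space: Hausdorff and locally compact, so that duality applies. One also has to check that the orientations on the various $M_a/\fS_l$ agree on overlaps. For the overlaps, I would argue that on $\CBra^{g,l}_{\Sigma\et a}\cap\CBra^{g,l}_{\Sigma\et a'}$ both orientations come from the canonical complex orientation of the local models $\Hur^\Delta(\fS_l\geo)_{\sigma}$ of \cite[Theorem B]{Bianchi:Hur4}. These models do not depend on the basepoint, so the orientations agree.

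\emph{Duality.} For an oriented rational homology $n$-manifold $X$, Poincar\'e--Lefschetz duality in Borel--Moore form gives $C^*(X;\bQ)\simeq C^{BM}_{n-*}(X;\bQ)$. Borel--Moore chains of a locally compact space are the reduced chains of its one-point compactification, that is $(|X^\infty|,\infty)_!\bQ$. Applying this with $X=\CBra^{g,l}_{\Sigma,\coarse}$ and $n=4\upsilon(g,l,h)$ yields the displayed equivalence, with the shift $[-4\upsilon]$ recording the degree change.

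\emph{Equivariance.} The action of $\Homeo^+(\Sigma)$ on $\CBra^{g,l}_\Sigma$ descends to the coarse space and extends to its one-point compactification by fixing $\infty$. This action preserves the orientation class: $\Homeo^+(\Sigma)$ is a topological group and the orientation is locally constant, so each path component of $\Homeo^+(\Sigma)$ acts in the same way as the identity on orientations. The duality map is cap product with the fundamental class, and that class is invariant, so the duality map is natural under homeomorphisms. This naturality gives the $\Homeo^+(\Sigma)$-equivariant statement, i.e.\ an equivalence of functors on $\bbB\Homeo^+(\Sigma)$. Concretely, I would realise this by performing the duality fibrewise over $E\Homeo^+(\Sigma)\times_{\Homeo^+(\Sigma)}(-)$, which is the parametrised version of the same argument.
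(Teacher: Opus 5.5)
Your strategy is the paper's. Its proof is the one-line remark that everything follows from Proposition \ref{prop:orbifold}, and your transfer computation of rational local homology as $G_x$-coinvariants is the standard way to unpack that. The point-set issue you left open is easy. Two points of $\CBra^{g,l}_{\Sigma,\coarse}$ have finite branch sets $P,P'$, so for $a\notin P\cup P'$ both lie in the single open chart $M_a/\fS_l$, which is Hausdorff and locally compact.

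\textbf{The gap: equivariance.} Continuity of the action and local constancy of the orientation only show that the \emph{identity component} of $\Homeo^+(\Sigma)$ preserves the orientation. They say nothing about the other components, so ``each path component acts in the same way as the identity'' does not follow.
\begin{itemize}
\item For $h\ge1$ the mapping class group is non-trivial and permutes the components of $\CBra^{g,l}_{\Sigma,\coarse}$.
\item A priori some mapping class could reverse the orientation of a component. The duality would then only be equivariant after twisting by an orientation character.
\item Your fibrewise duality over the Borel construction needs exactly this invariance, so it does not get around the problem.
\end{itemize}

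\textbf{How to close it.} You need a local argument, for example the following.
\begin{itemize}
\item Consider the stratum $(F_{2\upsilon}\setminus F_{2\upsilon-1})\CBra^{g,l}_{\Sigma,\coarse}$ of covers with $2\upsilon$ simple branch values. It is dense, since its complement maps with finite fibres to $\Ran_{\le 2\upsilon-1}(\Sigma)$ and so has dimension at most $4\upsilon-2$.
\item On this stratum the complex local models of Proposition \ref{prop:orbifold} reduce to products of discs in $\Sigma$. Hence the canonical orientation there is the pullback along the local homeomorphism $\brv$ of the orientation of $\Conf_{2\upsilon}(\Sigma)$ induced by that of $\Sigma$.
\item Every $f\in\Homeo^+(\Sigma)$ commutes with $\brv$ and preserves that orientation of $\Conf_{2\upsilon}(\Sigma)$.
\item The locus where $f$ preserves the orientation is open and closed. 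It contains a dense subset, so it is everything.
\end{itemize}
The same density argument also gives the compatibility of orientations on overlaps of the charts $M_a/\fS_l$. This is cleaner than appealing to basepoint-independence of the local models.
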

\begin{proof}
This follows directly from the fact that $\CBra^{g,l}_{\Sigma}$ is an oriented orbifold of said real dimension.
\end{proof}
A point in the space $\CBra^{g,l}_{\Sigma,\coarse}$ is an isomorphism class of $l$-fold branched covers $\theta\colon Y\to \Sigma$ with $Y$ connected of genus $g$. The group $\Homeo^+(\Sigma)$ acts on $\CBra^{g,l}_{\Sigma,\coarse}$ by postcomposing an equivalence class of maps $\theta$ with a homeomorphism of $\Sigma$, and the canonical map of homotopy types
\[
|\CBra^{g,l}_\Sigma|\sslash\Homeo^+(\Sigma)\to
|\CBra^{g,l}_{\Sigma,\coarse}|\sslash\Homeo^+(\Sigma)
\]
is a rational equivalence and compatible with projection to $|\fM_h\tp|\simeq\bbB\Homeo^+(\Sigma)$. Pulling back the projection
\begin{equation}
\label{eq:rvcrs}
|\CBra^{g,l}_{\Sigma,\coarse}|\sslash\Homeo^+(\Sigma)\xrightarrow{r_{g,l,h}}\bbB\Homeo^+(\Sigma)
\end{equation}
along the map $|\epsilon_{h,e}\tp|\colon|\cH_{h,e}\tp|\to|\fM_h\tp|$, we obtain a fibre sequence
\begin{equation}
\label{eq:cHvcrs}
|\CBra^{g,l}_{\Sigma,\coarse}|\hookrightarrow |\cH_{g,l}(\fp_{h,e})\tp_\vcoarse|\xrightarrow{r_{g,l,h,e}}|\cH_{h,e}\tp|,
\end{equation}
where $|\cH_{g,l}(\fp_{h,e})\tp_\vcoarse|$ is defined as the homotopy pullback
\[
|\cH_{g,l}(\fp_{h,e})\tp_\vcoarse|\coloneqq|(\CBra^{g,l}_\Sigma)_\coarse|\sslash\Homeo^+(\Sigma)
\times_{\bbB\Homeo^+(\Sigma)}|\cH_{h,e}\tp|;
\]
the ``v'' suggests that we are taking coarse spaces only in the ``vertical'' direction.

\begin{rem}
We expect $|\cH_{g,l}(\fp_{h,e})\tp_\vcoarse|$ to agree with the underlying homotopy type an algebraic stack ``$\cH_{g,l}(\fp_{h,e})_\vcoarse$'' of finite type over $\C$, parametrising degree $e$ maps $\theta\colon D\to L$ from a curve of genus $h$ to a curve of genus $0$, together with an isomorphism class of genus $g$, degree $l$ covers of $D$. We will not need this algebraic perspective in the following.
\end{rem}
By construction there is a canonical map
\[
|\cH_{g,l}(\fp_{h,e})\tp|\to|\cH_{g,l}(\fp_{h,e})\tp_\vcoarse|
\]
which is a rational equivalence. In particular, we have reduced the study of the rational cohomology of the algebraic stack $\cH_{g,l}(\fp_{h,e})$ to the study of the rational cohomology of the homotopy type $|\cH_{g,l}(\fp_{h,e})\tp_\vcoarse|$.

We may also consider the ``fibrewise one point compactification'' of
$|\cH_{g,l}(\fp_{h,e})\tp_\vcoarse|$, which we define as the following homotopy type:\footnote{We always first take coarse spaces and then one point compactifications.}
\[
|\cH_{g,l}(\fp_{h,e})\tpp_\vcoarse|\coloneqq|\CBra^{g,l,\infty}_{\Sigma,\coarse}|\sslash\Homeo^+(\Sigma)\times_{\bbB\Homeo^+(\Sigma)}|\cH_{h,e}\tp|.
\]
It fits in a fibre sequence as follows, endowed with a section at infinity which for simplicity we denote also $\infty$:
\begin{equation}
\label{eq:cHvcrsplus}
\begin{tikzcd}[column sep=40]
{|\CBra^{g,l}_{\Sigma,\coarse}|}\ar[r,hook]&{|\cH_{g,l}(\fp_{h,e})\tpp_\vcoarse|}\ar[r,"r^\infty_{g,l,h,e}"]&{|\cH_{h,e}\tp|}\ar[l,bend left, hook,"\infty"'].
\end{tikzcd}
\end{equation}

The Poincar\'e--Lefschetz equivalence from Corollary \ref{cor:Braorbifold} may be interpreted as an equivalence of parametrised spectra over $\bbB\Homeo^+(\Sigma)$. By restriction, and composing with the functor $|\cH_{h,e}\tp|_*$, we obtain equivalence of rational spectra
\[
|\cH_{g,l}(\fp_{h,e})\tp_\vcoarse|_*\bQ\simeq|\cH_{h,e}\tp|_*(r_{g,l,h,e})_*\bQ\simeq|\cH_{h,e}\tp|_*(r^\infty_{g,l,h,e},\infty)_!\bQ\ [-4\upsilon(g,l,h)]
\]
and in particular, taking homotopy groups, we obtain an isomorphism
\[
H^*(|\cH_{g,l}(\fp_{h,e})\tp_\vcoarse|;\bQ)\cong H^{*-4\upsilon(g,l,h)}\pa{|\cH_{h,e}\tp|\ ;\ (r^\infty_{g,l,h,e},\infty)_!\bQ}.
\]

\section{Factorisation homology}
\label{sec:facthomology}
In this section we introduce factorisation homology with coefficients in a commutative rational ring spectrum. We will focus on the commutative algebras $\cA(l)$, also appearing in the statement of Theorems \ref{thm:B}, \ref{thm:D} and \ref{thm:E}, as well as on the closely related commutative algebra $\cAh(l)$. The main result will be the following proposition.
\begin{prop}
\label{prop:defnTheta}
Let $\Sigma$ be a closed connected oriented topological surface of genus $h\ge0$, let $l\ge2$, and let $g\ge0$. Assume $\upsilon(g,l,h)\ge1$. Then there is a well-defined and $\Homeo^+(\Sigma)$-equivariant map of rational spectra
\[
\Theta^{g,l}_\Sigma\colon\pa{\int_{\Sigma}\cAh(l)}_{2\upsilon(g,l,h)}\to(|\CBra^{g,l,\infty}_{\Sigma,\coarse}|,\infty)_!\bQ.
\]
\end{prop}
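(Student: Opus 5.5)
We will build $\Theta^{g,l}_\Sigma$ by comparing both sides as functors of (disjoint unions of) discs, exploiting the locality of the one-point compactified branched-cover spaces.

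First we recall (or set up) $\cAh(l)$ as the commutative graded rational algebra whose degree-$n$ part is
\[
\bigoplus_{\sigma} \bQ\,[-2N(\sigma)]
\]
taken over $\fS_l$-conjugation classes with $N(\sigma)=n$, i.e. the reduced chains of the one-point compactification of the disc-local moduli $\Hur^\Delta(\fS_l\geo)_\sigma$ modulo $\fS_l$. Its multiplication is induced by the geodesic product $\sigma\tau$, which is kept when $N(\sigma\tau)=N(\sigma)+N(\tau)$ and set to zero otherwise. This is the associated graded of $\bQ[\fS_l]$ for the norm filtration.

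Next we define a functor $F$ on open subsets $U\subseteq\Sigma$:
\[
F(U)=(|\Bra^{l,\infty}_{U,\coarse}|,\infty)_!\bQ,
\]
the reduced rational chains on the one-point compactification of (coarse) branched covers of $\Sigma$ with all branch values in $U$, filtered by total norm. The key local computation is for a disc $U$: by \cite[Theorem B]{Bianchi:Hur4} and Proposition \ref{prop:orbifold}, each component $\Hur^\Delta(\fS_l\geo)_\sigma$ is a complex manifold of dimension $N(\sigma)$, and it is contractible, being a cone via radial scaling toward the centre. Its compactification therefore contributes a sphere of dimension $2N(\sigma)$. Passing to coarse spaces modulo $\fS_l$-relabelling recovers $\cAh(l)_n$, and this equivalence is compatible with the $E_2$ (indeed commutative) structure from disjoint unions of discs. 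The compatibility holds because colliding two clusters in the one-point compactification either produces a geodesic product or goes to $\infty$.

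Since $F$ is symmetric monoidal on disjoint discs, the universal property of factorisation homology (as a left Kan extension over $\mathrm{Disk}(\Sigma)$) gives a canonical map
\[
\int_\Sigma\cAh(l)\to F(\Sigma).
\]
We then restrict to the graded piece $2\upsilon(g,l,h)$ and compose with the projection of $F(\Sigma)_{2\upsilon}$ onto the summand $\CBra^{g,l,\infty}_{\Sigma,\coarse}$, namely the connected covers. This projection is legitimate because disconnected covers form a clopen piece of the same total norm. Naturality under embeddings gives $\Homeo^+(\Sigma)$-equivariance, since $\Homeo^+(\Sigma)$ acts on both $\mathrm{Disk}(\Sigma)$ and $F$ compatibly. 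The hypothesis $\upsilon\ge1$ ensures the target is a genuine reduced theory with at least one branch point, so no constant-term issues arise.

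The main obstacle is making $F$ a genuine symmetric monoidal functor on $\mathrm{Disk}(\Sigma)$ at the $\infty$-categorical level. The one-point compactification is not functorial for open embeddings in the covariant direction; the natural maps go by collapse (Pontryagin–Thom type), which are contravariant on spaces but covariant on reduced chains. We would first try to realise $F$ via a fibrewise Pontryagin–Thom collapse for open inclusions $U\subseteq V$: covers branched in $V$ but not only in $U$ are sent to $\infty$. We would then check that disjoint unions give a homeomorphism of smash products of compactifications, which gives strict monoidality. Commutativity, rather than mere $E_2$, then follows rationally from the explicit computation of the algebra.
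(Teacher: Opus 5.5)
\textbf{There is a genuine gap at the central step.} Your argument breaks there for two independent reasons.

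\emph{Variance.} Write $X_U$ for the covers with all branch values in an open $U\subseteq\Sigma$. For $U\subseteq V$, $X_U$ is an \emph{open} subspace of $X_V$. So the fibrewise Pontryagin--Thom collapse you propose is a map $X_V^\infty\to X_U^\infty$, i.e.\ $F(V)\to F(U)$. Reduced chains is a covariant functor of pointed spaces and does not reverse this. Your $F$ is therefore a presheaf: Borel--Moore chains restrict to open subsets. Consequently you cannot use the universal property of $\int_\Sigma$ as a colimit over $\mathrm{Disk}(\Sigma)$ to get a map $\int_\Sigma\cAh(l)\to F(\Sigma)$.

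\emph{Local values.} $\cAh(l)$ is concentrated in homological degree $0$; the version with $\bQ[-2N(\sigma)]$ is $\cA(l)$. By contrast, the one-point compactification of a contractible orbifold of real dimension $2N(\sigma)$ has reduced rational homology in degree $+2N(\sigma)$. Your own description $\bQ[-2N(\sigma)]$ already disagrees in sign with your sphere computation. Concretely, covers of a disc $U$ with one simple branch value form a copy of $U$, which contributes $\bQ[2]$ rather than $\bQ$ in degree $0$. Moreover, in each norm the covers of a disc also have non-geodesic components, e.g.\ two branch values with the same transposition, which can never collide. These contribute further Borel--Moore classes, so $F(\text{disc})\not\simeq\cAh(l)$ even after regrading.

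\textbf{What is missing} is the mechanism that makes the map exist and respect the product. The structure constants of $\cAh(l)$ are the factorisation counts $\#\bK(\mu,\ulambda)$. These must equal the automorphism-weighted number of ways a cover on the coarse space splits when a cluster of branch values separates. The slogan that a collision ``is geodesic or goes to $\infty$'' describes the topology but does not produce these coefficients.

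The paper instead builds $\Theta^{g,l}_\Sigma$ on the Ran exit-path model $\ER_\bullet(\Sigma,\cAh(l)_+)_{2\upsilon}$, as a weighted transfer along the finite map $\brv$. A labelled simplex $(s,(\sca{\lambda_z}))$ is sent to $\sum_t\frac{1}{\#\Aut(\alpha(t))}\,t$, the sum running over lifts $t$ of $s$ whose initial vertex has local monodromy classes $\lambda_z$.
\begin{itemize}
\item Degeneracies and the faces $d_i$ with $i\ge1$ follow from Lemma \ref{lem:onelift} and \'etaleness of $\brv$ on strata.
\item The face $d_0$, which implements the multiplication, reduces to matching $\#\bK(\umu,\ulambda)/\#\Aut(\theta_1)$ with the weighted count of lifts; this is Lemmas \ref{lem:eulerchar} and \ref{lem:bLequalbK}.
\item Equivariance comes from the levelwise $\Sing_\bullet(\Homeo^+(\Sigma))$-action.
\end{itemize}
This map goes the wrong way relative to proper pushforward along $\brv$: it is Poincar\'e dual to a weighted restriction. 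That is why no naive functoriality of one-point compactifications can produce it.
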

\begin{rem}
\label{rem:upsilonzero}
If $\upsilon(g,l,h)=0$, then the source and target of the map $\Theta^{g,l}_\Sigma$ are canonically equivalent to $\bQ$: we then just let $\Theta^{g,l}_\Sigma$ be the identity of $\bQ$. If $\upsilon(g,l,h)<0$ then both source and target of $\Theta^{g,l}_\Sigma$ are zero: we then let $\Theta^{g,l}_\Sigma$ be the zero map.
\end{rem}

\subsection{Two models for factorisation homology}
\label{subsec:factmodels}
The goal of this subsection is to introduce two explicit chain complex models for factorisation homology. We start with the abstract $\infty$-categorical notion.
\begin{defn}
We denote by $\Fin$ the category of finite sets, endowed with the coproduct (disjoint union) symmetric monoidal structure; we let $\FS$ denote the subcategory of finite non-empty sets and surjective maps; $\FS$ lacks a monoidal unit, but we may still consider it as a non-unital symmetric monoidal subcategory of $\Fin$. For a homotopy type $Y$ we denote by $\Fin_{/Y}$ the $\infty$-category of finite sets with a map to $Y$, and by $\FS_{/Y}$ the fibre product of $\infty$-categories $\FS\times_{\Fin}\Fin_{/Y}$.
\end{defn}
Let $\cC$ be a presentably symmetric monoidal $\infty$-category, and let $R\in\CAlg(\cC)$ be a commutative algebra object, i.e. a symmetric monoidal functor $R\colon\Fin\to\cC$.
Let $Y$ be a connected homotopy type. The factorisation homology of $Y$ with coefficients in $R$, denoted $\int_YR\in\cC$, is defined as the colimit over $\Fin_{/Y}$ of the composite functor $\Fin_{/Y}\to\Fin\xrightarrow{R}\cC$. The main motivation for this definition is perhaps that $\int_YR$ agrees with the underlying object of the colimit of the constant functor $Y\to\CAlg(\cC)$ at $R$. 
\begin{ex}
Our motivating examples for $\cC$ are $\Sp_\bQ$, the $\infty$-category of rational spectra, and $\Sp_\bQ^\bN\coloneqq\Fun(\bN,\Sp_\bQ)$, the $\infty$-category of functors from $\bN$ to $\Sp_\bQ$, which we refer to as \emph{graded spectra}. Here $\bN$ denotes the symmetric monoidal groupoid whose objects are natural numbers, and whose only morphisms are identities; $\Sp_\bQ$ is endowed with the symmetric monoidal structure given by tensor product of spectra, and $\Sp_\bQ^\bN$ with the symmetric monoidal structure given by Day convolution. There is a symmetric monoidal functor $\Sp_\bQ^\bN\to\Sp_\bQ$ sending a graded spectrum $(\omega_n)_{n\in\bN}\in\Sp_\bQ^\bN$ to the direct sum $\bigoplus_{n\in\bN}\omega_n\in\Sp_\bQ$; we refer to it as the functor forgetting grading.
Our motivating examples for $R$ are the algebras $\cAh(l),\cA(l)\in\CAlg(\Sp_\bQ^\bN)$, that we shall introduce in Definitions \ref{defn:cAheart} and \ref{defn:cA}, as well as their images in $\CAlg(\Sp_\bQ)$ along the grading forgetful functor.
\end{ex}

\begin{rem}
\label{rem:FSreduction}
Since we assume that $Y$ is connected, the following hold:
\begin{itemize}
\item the canonical functor $\FS_{/Y}\to\Fin_{/Y}$ is final, and in particular $\int_YR$ agrees with the colimit over $\FS_{/Y}$ of the composite functor $\FS_{/Y}\to\Fin\xrightarrow{R}\cC$;
\item if $\cC$ is stable and $R\to\one_\cC$ is an augmentation, then we may split $R=\one_\cC\oplus R_+$, where $R_+$ is the augmentation ideal. We may consider $R_+\colon\FS\to\cC$ as a non-unital symmetric monoidal functor. Then the colimit over $\FS_{/Y}$ of the composite $\FS_{/Y}\to\FS\xrightarrow{R_+}\cC$ computes the fibre of the augmentation $\int_YR\to\int_Y\one_\cC\simeq \one_\cC$.
\end{itemize}
\end{rem}

We may therefore give the following working definition.
\begin{defn}
\label{defn:int}
Let $Y$ be a connected homotopy type and let $\I\colon\FS\to\cC$ be a non-unital, symmetric monoidal functor with target a presentably symmetric monoidal stable $\infty$-category $\cC$. We define
\[
\int_Y\I\coloneqq \colim(\FS_{/Y}\to\Fin\xrightarrow{\I}\cC)\in\cC.
\]
\end{defn}
Our next aim is to give a first, concrete chain model for $\int_{|X|}\I$ when $X$ and $\I$ satisfy the following:
\begin{itemize}
\item $X$ is a CW complex containing infinitely many points and whose underlying homotopy type $|X|$ is connected;
\item $\I$ is a non-unital commutative ring in $\bQ$-vector spaces (i.e., it is endowed with an associative and commutative product, possibly without a unit) considered as a non-unital commutative rational ring spectrum concentrated in homological degree 0.\footnote{The construction works as well if $\I$ is a non-unital commutative ring in abelian groups, but for our applications $\bQ$-linearity will be anyway guaranteed.}
\end{itemize}
Fix such $X$ and $\I$ for the remainder of the subsection. Our main application later will be with $X=\Sigma$ a topological surface, and $\I=\cAh(l)_+$ as in Definition \ref{defn:cAheart}.
\begin{nota}
\label{nota:skeletalFS}
To avoid set theoretic issues, as well as some non-canonical choices in the following, we model the category $\FS$ by its skeletal full subcategory whose objects are finite nonempty subsets of $X$.
\end{nota}
\begin{nota}
We denote by $\Delta^p=\set{(t_1,\dots,t_p)\in\R^{p+1}\,|\,t_i\ge0,\ \sum_it_i=1}$ the standard geometric $p$-simplex. For $0\le i\le p$ we denote by $\delta_i\colon\Delta^{p-1}\to\Delta^p$ the $i$\textsuperscript{th} face inclusion. In particular $\delta_0\colon(t_0,\dots,t_{p-1})\mapsto(0,t_0,\dots,t_{p-1})$.

For $p\ge0$ and a singular simplex $s\colon \Delta^p\to X$ we denote by $\alpha(s)=s(1,0,\dots,0)\in X$ the initial vertex.
\end{nota}

In the following definition we introduce a model of the $\infty$-category $\FS_{/|X|}$ as a quasi-category, i.e. a simplicial set with the inner horn filling condition.
\begin{defn}
\label{defn:SingFS}
We model the $\infty$-category $\FS_{/|X|}$ as the simplicial set $\Sing^\FS_\bullet(X)$ described as follows: for $p\ge0$, a $p$-simplex in $\Sing^\FS_p(X)$ is a choice of:
\begin{itemize}
\item a sequence $\udlS=(S_0\twoheadrightarrow \dots\twoheadrightarrow S_p)$ of nonempty finite sets as in Notation \ref{nota:skeletalFS} and surjective maps, i.e. a $p$-simplex in the nerve of $\FS$;
\item a continuous map $s\colon\Delta^p\to X^{S_0}$ such that, for all $1\le i\le p$, there exists a (necessarily unique) dashed map making the following diagram of spaces commute:
\[
\begin{tikzcd}[row sep=10]
\Delta^{p-i}\ar[d,"\delta_0^i"]\ar[r,dashed]&X^{S_i}\ar[d,hook,"X^{S_0\twoheadrightarrow S_i}"]\\
\Delta^p\ar[r,"s"]&X^{S_0}.
\end{tikzcd}
\]
\end{itemize}
Face and degeneracy maps in $\Sing^\FS_\bullet(X)$ are given by omitting or repeating the finite subsets $S_i$, and by suitably restricting $s$.
\end{defn}
\label{defn:SingFSR}
We may now model $\int_{|X|}\I$ as the rational chain complex associated with the following simplicial $\bQ$-vector space $\Sing^\FS_\bullet(X,\I)$.
\begin{defn}
The simplicial $\bQ$-vector space $\Sing^\FS_\bullet(X,\I)$ is given in degree $p\ge0$ by
\[
\Sing^\FS_p(X,\I)=\bigoplus_{(\udlS,s)\in\Sing^\FS_p(X)}\I^{\otimes S_0}.
\]
For $1\le i\le p$, respectively $0\le j\le p$, the face map $d_i$, respectively the degeneracy map $s_j$, restricts to the identity of $\I^{\otimes S_0}$ from the $(\udlS,s)$-summand to the $d_i(\udlS,s)$-summand, respectively to the $s_j(\udlS,s)$-summand.
For $i=0$, instead, it sends the $(\udlS,s)$-summand $\I^{\otimes S_0}$ to the $d_0(\udlS,s)$-summand $\I^{\otimes S_1}$ along the multiplication map $\I^{\otimes S_0}\to \I^{\otimes S_1}$ induced by the surjection $S_0\twoheadrightarrow S_1$.
\end{defn}

\begin{rem}
\label{rem:catofsimplices}
The simplicial $\bQ$-vector space $\Sing^\FS_p(X,\I)$ arises concretely by considering the category of simplices of $\Sing^\FS_p(X,\I)$, which we temporarily denote by $W$. There is a functor $W\to\Vect_\bQ$ sending $(\udlS,s)\mapsto \I^{\otimes S_0}$, and $\Sing^\FS_\bullet(X,\I)$ is the left Kan extension of this functor along the projection $W\to\Delta\op$.
\end{rem}
The model $\Sing^\FS_\bullet(X,\I)$ of $\int_{|X|}\I$ is a bit large, so we next aim at providing a smaller model. The following definition recovers the exit path quasi-category of the Ran space of $X$, stratified by cardinality, in the sense of 
\cite[Definition A.6.2]{HA}.
\begin{defn}
\label{defn:Ran}
We let $\Ran(X)$ denote the set of all finite nonempty subsets of $X$. For $n\ge1$ we denote by $\Ran_{\le n}(X)\subseteq\Ran(X)$ the subset of those subsets of $X$ having cardinality at most $n$. We topologise $\Ran_{\le n}(X)$ as a quotient of $X^n$, and $\Ran(X)$ as the union $\bigcup_{n\ge1}\Ran_{\le n}(X)$. We denote by $\#\colon\Ran(X)\to\bN_{\ge1}$ the lower semicontinuous function assigning to a finite nonempty subset of $X$ its cardinality.

For $k\ge1$ we denote by $\Conf_k(X)\coloneqq\#^{-1}(k)\subseteq\Ran(X)$ the $k$\textsuperscript{th} unordered configuration space of $X$.
\end{defn}
\begin{defn}
\label{defn:ER}
We let $\ER_\bullet(X)\subseteq\Sing_\bullet(\Ran(X))$ denote the subsimplicial set spanned at level $p$ by those singular simplices $s\colon\Delta^p\to\Ran(X)$ for which the following property holds: there exist $n_0\ge \dots\ge n_p\in\bN_{\ge1}$ such that for all $0\le i\le p$ and all $(0,\dots,0,t_i,\dots,t_p)\in\Delta^p$ with $t_i\neq0$ we have $\#(s(0,\dots,0,t_i,\dots,t_p))=n_i$.
\end{defn}
There is an injective map of simplicial sets $\kappa\colon\ER_\bullet(X)\hookrightarrow\Sing^\FS_\bullet(X)$: it sends $s\in\ER_p(X)$ to the unique element of the form $(\udlS,t)\in\Sing^\FS_p(X)$ satisfying:
\begin{itemize}
\item $S_i=s(0,\dots,1,\dots,0)$, where the unique occurrence of 1 is in the $i$\textsuperscript{th} position;
\item $s\colon\Delta^p\to\Ran(X)$ agrees with the composite $\Delta^p\xrightarrow{t}X^{S_0}\to\Ran(X)$;
\item $\alpha(t)\in X^{S_0}$ is the inclusion $S_0\hookrightarrow X$.
\end{itemize}
In the following, we treat $\kappa$ as an inclusion of simplicial sets.
\begin{rem}
By \cite[Theorem A.6.4]{HA} we have that $\ER_\bullet(X)$ is in fact a quasi-category. We believe that the functor of quasi-categories $\kappa$ is in fact final, but refrain from proving this more general statement and focus on proving that it preserves the colimit corresponding to factorisation homology.
\end{rem}
\begin{defn}
Recall Definition \ref{defn:SingFSR} and Remark \ref{rem:catofsimplices}.
We let $\ER_\bullet(X,\I)$ be the simplicial $\bQ$-subvector space of $\Sing^\FS_\bullet(X,\I)$ given in degree $p\ge0$ by
\[
\ER_p(X,\I)=\bigoplus_{s\in\ER_p(X)}\I^{\alpha(s)}\subset\Sing^\FS_p(X,\I).
\]
We denote by $\kappa^\I\colon\ER_\bullet(X,\I)\hookrightarrow\Sing^\FS_\bullet(X,\I)$ the canonical inclusion.
\end{defn}
The main result of the subsection is the following lemma.
\begin{lem}
\label{lem:kappaiso}
The map $\kappa^\I$ induces a quasi-isomorphism on chain complexes.
\end{lem}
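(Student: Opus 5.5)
Our strategy is to filter both simplicial vector spaces by the cardinality of the initial vertex and compare associated gradeds. Concretely, for each $n\ge1$ let $F_n\Sing^\FS_\bullet(X,\I)$ be spanned by the summands indexed by $(\udlS,s)$ with $\#S_0\le n$, and let $F_n\ER_\bullet(X,\I)$ be spanned by the summands indexed by $s$ with $\#\alpha(s)\le n$. Both are simplicial subobjects, since the face $d_0$ only decreases cardinality (surjections) and the other faces preserve $S_0$. The map $\kappa^\I$ respects the filtrations, and both filtrations are exhaustive; since homology commutes with filtered colimits, it suffices to show that $\kappa^\I$ induces a quasi-isomorphism on each graded piece $F_n/F_{n-1}$. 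In the quotient $F_n/F_{n-1}$ the face $d_0$ becomes zero whenever $S_0\twoheadrightarrow S_1$ is not a bijection. Consequently, the graded piece of $\Sing^\FS_\bullet(X,\I)$ is identified with $\I^{\otimes n}$, twisted by the sign/permutation local system, tensored with the relative chains of a pair: on one side, simplices whose initial $S_0$ has cardinality $n$; on the other, those whose later vertices have smaller cardinality. The graded piece of $\ER_\bullet(X,\I)$ has an analogous description.

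The next step is to identify these graded pieces geometrically. For $\ER$, the $n$-th graded piece should compute the relative chains $(\Ran_{\le n}(X),\Ran_{\le n-1}(X))$ with coefficients in the local system $\I^{\otimes\alpha}$ over $\Conf_n(X)$. Here the exit-path condition in Definition \ref{defn:ER} says exactly that simplices start in the stratum $\Conf_n(X)$ and then may only drop cardinality; this is a singular-chains model for relative homology of the pair, using that $\Ran_{\le n-1}(X)\subset\Ran_{\le n}(X)$ is a CW-pair. For $\Sing^\FS$, we plan to show that the $n$-th graded piece computes the same pair. We factor each simplex $(\udlS,s)$ with $\#S_0=n$ through $X^{S_0}$, and observe that it defines a singular simplex of $X^n$ modulo $\fS_n$. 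The quotient should then compute the $\fS_n$-coinvariants of the relative chains of $(X^n,\Delta_{\mathrm{fat}})$, where $\Delta_{\mathrm{fat}}$ denotes the fat diagonal, with coefficients $\I^{\otimes n}$. Rationally these coinvariants agree with the relative chains of $(\Ran_{\le n},\Ran_{\le n-1})$ with coefficients in $\I^{\otimes\alpha}$. In this step we need to handle simplices of $X^{S_0}$ that touch the fat diagonal only at later faces in a non-exit-path way. The condition in Definition \ref{defn:SingFS} that face $i$ factors through $X^{S_i}$ plays the role of the exit-path condition, but points may collide in the interior of the simplex without being recorded in $\udlS$.

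The main obstacle is precisely this comparison: the simplices of $\Sing^\FS$ are much less constrained than exit paths, since they may hit the fat diagonal in their interiors. We would handle it by a subdivision/small simplices argument. Using barycentric subdivision together with a homotopy that pushes simplices off the fat diagonal, we would show that the subcomplex of simplices whose restriction to each half-open face $\{t_0=\dots=t_{i-1}=0,\ t_i\neq0\}$ has constant cardinality $\#S_i$ is quasi-isomorphic to the whole graded piece. This relies on $X$ being a CW complex, so that the relevant pairs are cofibrant and excision applies, and on $X$ having infinitely many points, so that the skeletal model in Notation \ref{nota:skeletalFS} is available. Finally, on this subcomplex, forgetting the ordering given by $t$ and remembering only the image in $\Ran(X)$ is, after rationally taking $\fS$-coinvariants, an isomorphism onto the graded piece of $\ER$. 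This step uses the normalisation $\alpha(t)=$ inclusion in the definition of $\kappa$. A five-lemma induction on $n$ then concludes.
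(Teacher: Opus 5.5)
Your architecture matches the paper's. You filter by $\#S_0$, observe that only bijective $d_0$'s survive in $F_n/F_{n-1}$, pass to ordered points with an $\fS_n$-symmetry (rational coinvariants are exact), and reduce everything to comparing simplices that meet the diagonals arbitrarily with exit-path simplices. That comparison is the whole content of the lemma, and the mechanism you propose for it does not work.

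The exit-path condition forces the entire region $\{t_0\neq0\}$ of a simplex with $\#S_0=n$ into $X^{S_0}\setminus\Delta_{\mathrm{fat}}$, and each half-open final face into exactly the stratum of $S_0\twoheadrightarrow S_i$. This is a highly non-generic condition, and neither subdivision nor a push-off can produce it:
\begin{itemize}
\item The preimage of $\Delta_{\mathrm{fat}}$ in a singular simplex can be an arbitrary closed subset, so no subdivision turns the pieces into exit paths. The exit-path condition is also not an open-cover smallness condition, which is what small-simplex arguments handle.
\item Simplices cannot be pushed off $\Delta_{\mathrm{fat}}$. For $X=[0,1]$ the diagonal separates $X^2$, so a $1$-simplex from $(0,1)$ to $(1,0)$ cannot be deformed rel endpoints off it. For a surface $X$, already $2$-simplices meet $\Delta_{\mathrm{fat}}$ generically.
\end{itemize}
What is needed is a genuine theorem about stratified spaces. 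For $X^T$ stratified by the partitions of $T$, the inclusion of exit-path simplices into all (lax) simplices must be a weak equivalence, compatibly with the closed union of diagonals. The paper obtains exactly this from \cite[Corollary A.9.4]{HA} applied to $X^T$ filtered by its diagonals. Your plan needs the same input a second time on the $\ER$ side: that exit-path simplices compute $H_*(\Ran_{\le n}(X),\Ran_{\le n-1}(X))$ does not follow from this being a CW pair.

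Your final step is also false as stated. In the skeletal model a simplex $(\udlS,t)$ carries arbitrary finite subsets $S_i\subseteq X$ as names and an arbitrary initial vertex $\alpha(t)\in X^{S_0}$. Only those with $\alpha(t)$ the inclusion $S_0\hookrightarrow X$ and $S_i$ the actual point sets lie in the image of $\kappa$. So over each $s\in\ER_p(X)$ there are infinitely many exit-path simplices of $\Sing^\FS_p(X)$ with different underlying subsets $S_0$. No $\fS_n$-coinvariants identify these summands, so forgetting to $\Ran(X)$ is not an isomorphism, already in simplicial degree $0$.

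At best it is a quasi-isomorphism: different namings are joined by $1$-simplices with $S_0\twoheadrightarrow S_1$ bijective and $t$ constant, and this needs an argument. The paper handles both this redundancy and the twisted coefficients at once by pulling back along $N_\bullet(\FS_{T/})\to N_\bullet(\FS)$. The coefficients become the constant $\I^{\otimes T}$, and the claim becomes that $\kappa_{T/}$ is a weak equivalence of pairs of simplicial sets. One then descends to $F_k/F_{k-1}$ by taking $\fS_T$-coinvariants of projective $\bQ[\fS_T]$-modules.
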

To prove Lemma \ref{lem:kappaiso} we need to introduce suitable compatible filtrations on the source and target of $\kappa^\I$.
\begin{defn}
\label{defn:starfiltation}
For $k\ge1$ we let $F_k\Sing^\FS_\bullet(X)$ denote the subsimplicial set spanned by those simplices $(\udlS,s)$ with $\#S_0\le k$. We obtain a filtration of simplicial sets $F_\star\Sing^\FS_\bullet(X)$ with $F_k\Sing^\FS_\bullet(X)\subseteq F_{k+1}\Sing^\FS_\bullet(X)$. We restrict this filtration along $\kappa$ and obtain a filtration $F_\star\ER_\bullet(X)$.

Passing to simplicial $\bQ$-vector spaces, we also let $F_k\Sing_\bullet^\FS(X,\I)$ denote the simplicial $\bQ$-subvector space of $\Sing_\bullet^\FS(X,\I)$ spanned by those summands corresponding to simplices in $F_k\Sing_\bullet^\FS(X)$. We also restrict this filtration along $\kappa^\I$.
\end{defn}
\begin{proof}[Proof of Lemma \ref{lem:kappaiso}]
Since $\kappa^\I$ is a filtered map, it suffices to prove it induces a quasi-isomorphism $(F_k/F_{k-1})\kappa^\I$ between filtration quotients for all $k\ge1$. Fix $k$ in the following and let $T\subseteq X$ be a fixed set of $k$ elements. 
Using the nerves of the categories $\FS$ and $\FS_{T/}$,
define the auxiliary simplicial set 
\[
\Sing^\FS_\bullet(X)_{T/}\coloneqq\Sing^\FS_\bullet(X)\times_{N_\bullet(\FS)}N_\bullet(\FS_{T/}),
\]
and following Remark \ref{rem:catofsimplices} define the auxiliary simplicial $\bQ$-vector space $\Sing^\FS_\bullet(X,\I)_{T/}$, given as the left Kan extension to $\Delta\op$ of the functor from the category of simplices of $\Sing^\FS_\bullet(X,\I)_{T/}$ to $\Vect_\bQ$ sending $(\udlS,s,T\twoheadrightarrow S_0)\mapsto \I^{\otimes S_0}$.
Explicitly, we have
\[
\Sing^\FS_p(X,\I)_{T/}=\bigoplus_{(\udlS,s,T\twoheadrightarrow S_0)\in\Sing^\FS_p(X)_{T/}}\I^{\otimes S_0}
\]

Define similarly the pullback simplicial set
\[
\ER_\bullet(X)_{T/}\coloneqq \ER_\bullet(X)\times_{\Sing_\bullet^\FS(X)}\Sing_\bullet^\FS(X)_{T/}\subseteq\Sing^\FS_\bullet(X)_{T/}
\]
and the auxiliary simplicial $\bQ$-vector space $\ER_\bullet(X,\I)_{T/}$, with
\[
\ER_p(X,\I)_{T/}=\bigoplus_{(s,T\twoheadrightarrow \alpha(s))\in\ER_p(X)_{T/}}R^{\alpha(s)}\subseteq\Sing^\FS_\bullet(X,\I)_{T/}.
\]
Then $\kappa^\I$ canonically lifts to an injective map of simplicial $\bQ$-vector spaces
\[
\kappa^\I_{T/}\colon \ER_\bullet(X,\I)_{T/}\hookrightarrow\Sing^\FS_\bullet(X,\I)_{T/},
\]
which we treat as an inclusion. The group $\fS_T$ of automorphisms of $T$ naturally acts on both auxiliary simplicial $\bQ$-vector spaces, and $\kappa^\I_{T/}$ is $\fS_T$-equivariant.

For $n=k-1,k$ denote by $F_n\Sing^\FS_\bullet(X,\I)_{T/}$ the preimage of $F_n\Sing^\FS_\bullet(X,\I)$ along the forgetful map $\Sing^\FS_\bullet(X,\I)_{T/}\to \Sing^\FS_\bullet(X,\I)$, and define  in a similar way $F_n\ER_\bullet(X,\I)_{T/}$. Then $\kappa^\I_{T/}$ is a filtered simplicial $\bQ$-vector space map, and upon quotienting the $\fS_T$-action, the simplicial map $(F_k/F_{k-1})\kappa^\I_{T/}$ becomes the simplicial map $(F_k/F_{k-1})\kappa^\I$. Using that the source and target of $(F_k/F_{k-1})\kappa^\I_{T/}$ are simplicial $\bQ$-vector spaces consisting of (necessarily) projective $\bQ[\fS_T]$-modules, it suffices to prove that $(F_k/F_{k-1})\kappa^\I_{T/}$ is a quasi-isomorphism.

For this, notice that $\ER_\bullet(X,\I)_{T/}$ computes the relative homology of the pair of simplicial sets $\pa{F_k\ER_\bullet(X)_{T/},F_{k-1}\ER_\bullet(X)_{T/}}$ with \emph{constant} coefficients in $\I^{\otimes T}$, and similarly 
$\Sing^\FS_\bullet(X,\I)_{T/}$ computes the relative homology of the pair of simplicial sets $\pa{F_k\Sing^\FS_\bullet(X)_{T/},F_{k-1}\Sing^\FS_\bullet(X)_{T/}}$ with constant coefficients in $\I^{\otimes T}$. It suffices therefore to show that the map of simplicial sets $\kappa_{T/}\colon\ER_\bullet(X)_{T/}\hookrightarrow\Sing^\FS_\bullet(X)_{T/}$ lifting $\kappa$ is a filtered weak equivalence with respect to the two-step filtration given above. This follows by applying
\cite[Corollary A.9.4]{HA} to the space $X^T$, filtered by its diagonals corresponding to all partitions of $T$.
\end{proof}
If $\I$ is a \emph{graded} non-unital commutative $\bQ$-algebra, i.e. a non-unital symmetric monoidal functor $\FS\to\Vect_\bQ^\bN$, then $\ER_\bullet(X,\I)$ naturally upgrades to a simplicial graded $\bQ$-vector space, whose part $\ER_\bullet(X,\I)_n$ of grading $n\in\bN$ models the rational spectrum $(\int_{|X|}\I)_n$. 
\subsection{The algebra \texorpdfstring{$\cA(l)$}{A(l)}}
In this subsection we introduce two commutative algebras in graded rational spectra, which we denote by $\cA(l),\cAh(l)\in\CAlg(\Sp_\bQ^\bN)$; the first appears in the statement of Theorems \ref{thm:B},\ref{thm:D} and \ref{thm:E}, whereas the second is a mild, auxiliary variation in which all homological degrees are forced to be 0, whence the symbol $\heartsuit$. We will actually start introducing this variation.

Let $l\ge2$, and consider the rational group algebra $\bQ[\fS_l]=\bigoplus_{\sigma\in\fS_l}\bQ\cdot\sigma$. It is an associative algebra in $\Vect_\bQ$, and we may consider it as an object in $\Alg(\Sp_\bQ)$ concentrated in degree 0. There is also an action of the group $\fS_l$ on $\bQ[\fS_l]$ by conjugation: it is an action by automorphisms of associative algebras.

Recall Definition \ref{defn:norm}. We may filter $\bQ[\fS_l]$ according to the norm, i.e. we may declare, for $k\ge0$, the $k$\textsuperscript{th} filtration part to be
\[
F_k=F_k\bQ[\fS_l]=\bigoplus_{\sigma\in\fS_l,N(\sigma)\le k}\bQ\cdot \sigma.
\]
This is a multiplicative filtration, i.e. $F_k\cdot F_{k'}\subseteq F_{k+k'}$; it is also preserved by the $\fS_l$-action. We may therefore pass to the associated graded and obtain the graded associative algebra
\[
\mathrm{grad}_F\bQ[\fS_l]=\bigoplus_{\sigma\in\fS_l}\bQ\cdot[\sigma]\in\Alg(\Sp^\bN_\bQ),
\]
in which $[\sigma]$ denotes the image of $\sigma\in F_{N(\sigma)}$ in the quotient $F_{N(\sigma)}/F_{N(\sigma)-1}\subseteq\mathrm{grad}_F\bQ[\fS_l]$. There is still an action of $\fS_l$ on $\mathrm{grad}_F\bQ[\fS_l]$ by automorphisms of graded algebras.
\begin{defn}
\label{defn:cAheart}
We let $\cAh(l)$ denote the subalgebra of $\fS_l$-invariants
\[
\cAh(l)\coloneqq \mathrm{grad}_F\bQ[\fS_l]^{\fS_l}\in\Alg(\Sp_\bQ^\bN).
\]
Its part of grading $0$ is isomorphic to $\bQ$; we let $\cAh(l)_+\subseteq\cAh(l)$ denote the part of strictly positive grading.
\end{defn}
We have that $\cAh(l)$ is\footnote{We should say ``has the structure'' instead of ``is'', but both here, and later for $\cA(l)$, the commutative upgrading is essentially unique.} in fact a commutative algebra in graded rational spectra \cite[Lemma 4.31]{Bianchi:Hur1}. The underlying (ungraded) spectrum of $\cAh(l)$ is the direct sum $\bigoplus_{\lambda}\bQ\cdot\sca{\lambda}$, where the sum is extended over all partitions $\lambda\in\fP(l)$, i.e. over all conjugacy classes in $\fS_l$, and where $\sca{\lambda}\coloneqq\sum_{\sigma\in\lambda}[\sigma]$. The summand $\bQ\cdot\sca{\lambda}$ has grading $N(\lambda)$ but it lives in homological degree 0.
\begin{nota}
\label{nota:bK}
Let $k\ge1$, let $\mu\vdash l$, and let $\ulambda=(\lambda_1,\dots,\lambda_k)$ be a sequence of partitions $\lambda_i\vdash l$. 
For $\sigma\in\mu$ we denote by $\bK(\sigma,\ulambda)$ the set of all sequences $\utau=(\tau_1,\dots,\tau_k)$ of permutations $\tau_i\in\lambda_i$ such that the equality $\sigma=\tau_1\dots\tau_k\in\fS_l$ holds.
We denote by $\#\bK(\mu,\ulambda)\coloneqq\#\bK(\sigma,\ulambda)$ for any $\sigma\in\mu$.
\end{nota}
Notation \ref{nota:bK} allows us to explicitly write a formula for the product
\[
\sca{\lambda_1}\dots\sca{\lambda_k}=\sum_{\mu\in\fP(l)}\#\bK(\mu,\ulambda)\cdot\sca{\mu},
\]
using also Notation \ref{nota:fP}. 
Note that whenever the equality $N(\mu)=\sum_{i=1}^kN(\lambda_i)$ does not hold, we have $\#\bK(\mu,\ulambda)=0$.

We may instead start from an alternative version of the group algebra $\bQ[\fS_l]_N\in\Alg(\Sp_\bQ)$, whose underlying spectrum is $\bigoplus_{\sigma\in\fS_l}\bQ[-2N(\sigma)]\cdot\sigma$; that is, we deliberately put the generator $\sigma$ in homological degree $-2N(\sigma)$, which means cohomological degre $2N(\sigma)$. We still have a multiplicative filtration $F_\bullet$ which is compatible with the $\fS_l$-action given by conjugation.
\begin{defn}
\label{defn:cA}
We define $\cA(l)\in\Alg(\Sp_\bQ^\bN)$ as
\[
\cA(l)\coloneqq\mathrm{grad}_F\bQ[\fS_l]_N^{\fS_l}\in\Alg(\Sp_\bQ^\bN).
\]
\end{defn}
Again, we may regard $\cA(l)\in\CAlg(\Sp_\bQ^\bN)$, and we may decompose the underlying spectrum of $\cA(l)$ as $\bigoplus_{\sigma\in I}\bQ[-2N(\sigma)]\cdot \sca{\lambda}$.

\begin{rem}
\label{rem:evenshift}
We observe that $\cA(l)$ may be obtained from $\cAh(l)$ also in a direct, categorical way. The category $\Sp_\bQ^\bN$ is endowed with a symmetric monoidal shift endofunctor $\shift\colon\Sp_\bQ^\bN\to\Sp_\bQ^\bN$, sending the sequence of spectra $(\omega_n)_{n\in\bN}$ to the sequence $(\omega_n\ [-2n])_{n\in\bN}$. We then have that the commutative algebra $\cA(l)$ is the image of the commutative algebra $\cAh(l)$ along $\shift$. Since $\shift$ is an equivalence, it preserves colimits; being symmetric monoidal, it also preserves factorisation homologies. In particular, for all $n\in\bN$ and naturally in a space $X$ we have an equivalence of rational spectra 
\[
\pa{\int_{|X|}\cA(l)}_n\simeq\pa{\int_{|X|}\cAh(l)}_n\ [-2n].
\]
\end{rem}
By Lemma \ref{lem:kappaiso} and Remark \ref{rem:FSreduction}, for a topological surface $\Sigma$ we may model $\int_{|\Sigma|}\cAh(l)$ by the simplicial graded $\bQ$-vector space $\ER_\bullet(\Sigma,\cAh(l))$; moreover, the fibre of the augmentation $\int_{|\Sigma|}\cAh(l)\to\int_{|\Sigma|}\bQ\simeq\bQ$ is the part $(\int_{|\Sigma|}\cAh(l))_{\ge1}$ of strictly positive grading, and it can be modeled by $\ER_\bullet(\Sigma,\cAh(l)_+)$.

If $g\ge0$ and $l\ge2$ are such that $\upsilon(g,l,h)\ge1$, we obtain in particular that $\ER_\bullet(\Sigma,\cAh(l)_+)_{2\upsilon(g,l,h)}$ is a simplicial $\bQ$-vector space model for the source rational spectrum $(\int_{|\Sigma|}\cAh(l))_{2\upsilon(g,l,h)}$ of the map $\Theta^{g,l}_\Sigma$ in Proposition \ref{prop:defnTheta}

\subsection{The branch values map}
For the reminder of the section we fix a connected closed surface $\Sigma$ of genus $h\ge0$ and numbers $g\ge0$, $l\ge2$, and we abbreviate $\upsilon=\upsilon(g,l,h)$. We assume $\upsilon\ge1$ to avoid trivialities.
In this subsection we introduce the branch values map $\brv$ and analyse its main properties.
\begin{defn}
\label{defn:brv}
We let $\brv\colon\CBra^{g,l}_{\Sigma,\coarse}\to\Ran_{\le2\upsilon}(\Sigma)\subseteq\Ran(\Sigma)$ denote the continuous map sending an isomorphism class of maps $\theta\colon C\to \Sigma$ to its set of branch values in $\Sigma$. For $1\le k\le2\upsilon$ we let $F_k\CBra^{g,l}_{\Sigma,\coarse}=\brv^{-1}(\Ran_{\le k}(\Sigma))$; this defines a filtration of $\CBra^{g,l}_{\Sigma,\coarse}$ by the poset $\bN$.

We define a similar filtration of $\CBra^{g,l,\infty}_{\Sigma,\coarse}$ by the subspaces 
\[
F_k\CBra^{g,l,\infty}_{\Sigma,\coarse}\coloneqq F_k\CBra^{g,l}_{\Sigma,\coarse}\cup\set{\infty},
\]
and for $k=-1,-2$ we also let $F_{-1}\CBra^{g,l,\infty}_{\Sigma,\coarse}=\set{\infty}$ and $F_{-2}\CBra^{g,l,\infty}_{\Sigma,\coarse}=\emptyset$. 

\end{defn}

We observe that $\brv$ is a map with finite fibres between locally compact Hausdorff spaces; moreover, for any $k\ge1$, the restricted map
\[
\brv\colon 
(F_k\setminus F_{k-1})\CBra^{g,l}_{\Sigma,\coarse}
\to \Conf_k(\Sigma)
\]
is finite \'etale. In the following lemma we highlight a technical feature of $\brv$.
\begin{lem}
\label{lem:onelift}
Let $p,k\ge1$, let $s\colon\mD^p\to (F_k\setminus F_{k-1})\CBra^{g,l,\infty}_{\Sigma,\coarse}$ be a continuous map defined on the interior of $\Delta^p$, and assume that the composite $\brv\circ s$ extends to a map $\Delta^p\to\Ran_{\le2\upsilon}(\Sigma)$. Then $s$ extends to a continuous map $\Delta^p\to\CBra^{g,l,\infty}_{\Sigma,\coarse}$.
\end{lem}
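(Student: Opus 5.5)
The plan is to define the extension pointwise on $\partial\Delta^p$ by taking limits of $s$, and to show that these limits exist. Three facts do the work: the target $\CBra^{g,l,\infty}_{\Sigma,\coarse}$ is compact Hausdorff, since it is the one point compactification of the locally compact Hausdorff space $\CBra^{g,l}_{\Sigma,\coarse}$; the map $\brv$ is continuous with finite fibres; and the intersection of the open simplex $\mD^p$ with a small convex neighbourhood of a point of $\Delta^p$ is connected. I do not expect to need the detailed local structure of the basic open sets $\fU(P,\phi,\uU)$. I also do not expect to need the hypothesis that $s$ lands in the single stratum $(F_k\setminus F_{k-1})$, beyond knowing that $s$ lands in $\CBra^{g,l}_{\Sigma,\coarse}$.

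Write $\bar b\colon\Delta^p\to\Ran_{\le2\upsilon}(\Sigma)$ for the given extension of $\brv\circ s$, and fix $y\in\partial\Delta^p$. For $\epsilon>0$ let $B_\epsilon=\set{x\in\mD^p\,|\,\|x-y\|<\epsilon}$. This set is convex, hence nonempty and connected; here we use $p\ge1$ and that $y$ lies in the closure of $\mD^p$. I will consider the cluster set
\[
L_y=\bigcap_{\epsilon>0}\overline{s(B_\epsilon)}\subseteq\CBra^{g,l,\infty}_{\Sigma,\coarse}.
\]
Each $\overline{s(B_\epsilon)}$ is compact, nonempty and connected, because it is the closure of a continuous image of a connected set. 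A decreasing intersection of compact connected nonempty subsets of a Hausdorff space is nonempty and connected, so $L_y$ is nonempty and connected.

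Next I show that $L_y\subseteq\brv^{-1}(\bar b(y))\cup\set{\infty}$. Suppose $c\in L_y$ with $c\neq\infty$. Since the target is first countable near $c$ and $\Delta^p$ is metric, I can pick $x_n\in\mD^p$ with $x_n\to y$ and $s(x_n)\to c$. Continuity of $\brv$ on $\CBra^{g,l}_{\Sigma,\coarse}$ then gives $\brv(c)=\lim\brv(s(x_n))=\lim\bar b(x_n)=\bar b(y)$. The space $\Ran_{\le2\upsilon}(\Sigma)$ is Hausdorff, so this limit is unique. Since $\brv$ has finite fibres, $L_y$ is a connected subset of a finite subset of a Hausdorff space, and therefore consists of a single point. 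I define $\bar s(y)$ to be this point, and set $\bar s=s$ on $\mD^p$.

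Finally I check that $\bar s$ is continuous. At interior points this is clear. At a boundary point $y$, let $V$ be an open neighbourhood of $\bar s(y)$. By compactness, $\overline{s(B_\epsilon)}\subseteq V$ for some $\epsilon>0$: otherwise the compact sets $\overline{s(B_\epsilon)}\setminus V$ would be nonempty and decreasing, so their intersection would give a point of $L_y$ outside $V$. Now take a boundary point $y'$ with $\|y'-y\|<\epsilon/2$. Then $B_{\epsilon/2}(y')\subseteq B_\epsilon$, so $\bar s(y')\in\overline{s(B_\epsilon)}\subseteq V$. Interior points of that neighbourhood also map into $s(B_\epsilon)\subseteq V$. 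Hence $\bar s$ maps a neighbourhood of $y$ into $V$, which proves continuity. The only step needing care is the connectedness of the cluster set, that is, the fact that a nested intersection of compact connected sets is connected; this is standard point-set topology in compact Hausdorff spaces.
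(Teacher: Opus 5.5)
Your argument is correct, and it follows a genuinely different route from the paper.

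\textbf{The paper's route.} The paper works with the local structure of $\CBra^{g,l}_{\Sigma,\coarse}$. Given a sequence $x_i\to x_\infty\in\partial\Delta^p$, it chooses discs $\uU$ around the points of $\brv\circ s(x_\infty)$. It then uses straight segments between consecutive $x_i$, along which $\brv\circ s$ stays in $\uU$, to see that the restricted \'etale covers over $\SmU$ are all isomorphic. Finally it reads off the limit from the norms $n_j$ of the monodromies around $\partial U_j$. If $\sum_j n_j=2\upsilon$, the limit is the unique cover branched at $\brv\circ s(x_\infty)$ with that restriction over $\SmU$; otherwise the limit is $\infty$.

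\textbf{What your route buys.} Your cluster-set argument replaces all of this with soft topology: compactness of the one-point compactification, continuity and finite fibres of $\brv$, Hausdorffness of $\Ran_{\le2\upsilon}(\Sigma)$, and connectedness of $B_\epsilon$. It also handles explicitly two points the paper leaves implicit: that the limit does not depend on the chosen sequence, and that the extension is continuous at boundary points approached along $\partial\Delta^p$. It also shows that the stratum hypothesis plays no role. As a small aside, you do not need first countability at $c$. If $\brv(c)\neq\bar b(y)$, separate the two points by disjoint opens $W,W'$, shrink $\epsilon$ so that $\bar b(B_\epsilon)\subseteq W'$, and note that $\brv^{-1}(W)$ must meet $s(B_\epsilon)$, a contradiction.

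\textbf{What you give up.} Your proof does not identify the extension; you only learn that $\bar s(y)\in\brv^{-1}(\bar b(y))\cup\set{\infty}$. The paper uses exactly that identification in the discussion preceding Lemma \ref{lem:eulerchar}: ``the proof of Lemma \ref{lem:onelift} shows that $t(1)$ will be the isomorphism class of $\theta_1$''. That step feeds the coefficient comparison in the proof of Proposition \ref{prop:defnTheta}. With your proof, the local-constancy argument for the restricted cover over $\SmU$, and the norm count deciding between a genuine cover and $\infty$, would have to be supplied separately at that point.

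Also, the local compactness and Hausdorffness you rely on come from the orbifold structure in Proposition \ref{prop:orbifold}. So your approach repackages the local analysis rather than eliminating it.
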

\begin{proof}
Let $(x_i)_{i\in\bN}$ be a sequence of points in $\mD^p$ with limit $x_\infty\in\partial\Delta^p$. Let $\brv\circ s(x_\infty)=\set{z_1,\dots,z_h}\subseteq\Sigma$, with $h\le k$. Let $\uU=(U_1,\dots,U_h)$ be disjoint open discs in $\Sigma$ with $z_j\in U_j$. Up to discarding finitely many points $x_i$, we may assume that for all $i\in\bN$ we have $\brv\circ s(x)\subseteq \uU$ for any $x$ lying in the straight interval joining $x_i$ with $x_{i+1}$ in $\mD^p$. Pick representatives $\theta_i\colon C_i\to \Sigma$ of $s(x_i)$; by the previous we have that each $l$-fold branched cover $\theta_i$ restricts to an \'etale cover $\theta_i\colon\theta_i^{-1}(\SmU)\to\SmU$, and that these restricted covers are pairwise isomorphic. In particular, for all $1\le j\le h$, the norm of the monodromy of $\theta_i$ around the boundary of $U_j$ only depends on $j$, but not on $i$: let us denote this number by $n_j$. There are now two cases:
\begin{itemize}
\item if $\sum_{j=1}^hn_j=2\upsilon$, then the sequence $s(x_i)$ admits a limit in $\CBra^{g,l}_{\Sigma,\coarse}$, given by the unique isomorphism class of covers $\theta_\infty\colon C_\infty\to\Sigma$ satisfying both of the following:
\begin{itemize}
\item the image of $\theta_\infty$ along $\brv$ is $\set{z_1,\dots,z_h}$;
\item the restriction of $\theta_\infty$ over $\SmU$ is isomorphic to any restriction of $\theta_i$ over $\SmU$;
\end{itemize}
\item if instead $\sum_{j=1}^hn_j<2\upsilon$, then the sequence $s(x_i)$ tends to $\infty$.
\end{itemize}
In all cases we have found a  limit in $\CBra^{g,l,\infty}_{\Sigma,\coarse}$.
\end{proof}
In the reminder of the subsection we let $s\colon[0,1]\to\Ran_{\le2\upsilon}(\Sigma)$ be a continuous map restricting to a map $[0,1)\to\Conf_k(\Sigma)$, and we let $\theta_1\colon C_1\to \Sigma$ be a representative of elements in $\CBra^{g,l}_{\Sigma,\coarse}$ with $\brv(\theta_1)=s(1)$. We are broadly interested in classifying lifts of $s$ along $\brv$ ending at $\theta$, i.e.
maps $t\colon [0,1]\to\CBra^{g,l}_{\Sigma,\coarse}$ with $\brv\circ t=s$ and $t(1)=\theta_1$. Our goal is to establish the equalities in Lemmas \ref{lem:eulerchar} and \ref{lem:bLequalbK}.

We denote $P=s(1)=\set{z_1,\dots,z_h}\subseteq\Sigma$, with $h\le k$, and we let $\mu_j\vdash l$ denote the conjugacy class of the local monodromy of $\theta_1$ at $z_j\in P$.

Fix a collection of disjoint open discs $\uU=(U_1,\dots,U_k)$ of $\Sigma$ with $z_j\in U_j$. Let $\epsilon>0$ be such that $s(x)\subseteq\uU$ for all $x\in[1-\epsilon,1]$.
Since $\brv$ is \'etale over $\Conf_k(\Sigma)$, lifts defined over $[1-\epsilon,1]$ uniquely extend to lifts defined over the entire $[0,1]$. Up to replacing $s$ with its restriction on $[1-\epsilon,1]$ we may henceforth assume that $s(x)\subseteq\uU$ for all $x\in[0,1]$. In the following, let $P'=s(0)$, let $P'_j=P'\cap U_j$, and for each $1\le j\le h$ let $z'_{j,1},\dots,z'_{j,k_j}$ be the points of $P'_j$, for suitable numbers $k_j>0$ satisfying $\sum_{j=1}^hk_j=k$.

\begin{defn}
\label{defn:bL}
Let $\ulambda=(\lambda_{j,i})_{1\le j\le h,1\le i\le k_j}$ be a sequence of partitions of $l$, and assume $\sum_{i=1}^{k_j}N(\lambda_{j,i})=N(\sigma_j)$. We denote by
$\bL(\theta_1,\uU,\ulambda)$
the set of all isomorphism classes of pairs $(\theta_0,\psi)$ consisting of a map $\theta_0\colon C_0\to \Sigma$ of degree $l$ with source of genus $g$, branched over $P'=s(0)$, and an identification $\psi\colon\theta_0^{-1}(\SmU)\cong\theta_1^{-1}(\SmU)$ over $\SmU$; we further require that the local monodromy of $\theta_0$ at $z'_{i,j}$ is in $\lambda_{i,j}$. Notice that two such pairs $(\theta_0,\psi)$ and $(\theta'_0,\psi')$, if they are isomorphic, are uniquely isomorphic. The group $\Aut(\theta_1)$ of automorphisms of $\theta_1\colon C_1\to \Sigma$ can be identified with the group of automorphisms of the restricted, \'etale cover $\theta_1^{-1}(\SmU)\to\SmU$, so it acts on the set $\bL(\theta_1,\uU,\ulambda)$ by postcomposition on $\psi$.
\end{defn}

We start with some general observations. Let $t\colon[0,1]\to\CBra^{g,l}_{\Sigma,\coarse}$ be any lift of $s$ along $\brv$ ending at $\theta_1$, and denote by $\theta_x\colon C_x\to\Sigma$ a representative of $t(x)$ for all $x\in[0,1]$. Then 
for all $x\in[0,1]$, and in particular for $x=0$, the \'etale coverings $\theta_x^{-1}(\SmU)\to\SmU$ and $\theta_1^{-1}(\SmU)\to\SmU$ are abstractly isomorphic. Moreover, if $\lambda_{j,i}$ denotes the conjugacy class of the local monodromy of $\theta_0$ at $z'_{j,i}\in P'=\brv(t(0))$, then the equalities $N(\mu_j)=\sum_{i=1}^{k_j}N(\lambda_{j,i})$ hold.

Viceversa, let $\theta_0\colon C_0\to\Sigma$ be any genus $g$ degree $l$ branched cover whose restriction over $\SmU$ is abstractly isomorphic to $\theta_1^{-1}(\SmU)$, and let $\lambda_{j,i}$ be the conjugacy class of the local monodromy of $\theta_0$ at $z'_{j,i}$. Since both $C_0$ and $\theta_1^{-1}(\SmU)$ have the same genus $g$, the
preimage $\theta_0^{-1}(\uU)$ must be a disjoint union of discs.
The Riemann--Hurwitz formula then implies that the equalities $N(\mu_j)=\sum_{i=1}^{k_j}N(\lambda_{j,i})$ hold. Therefore, we may extend $\theta_0$ to a map $t\colon[0,1)\to\CBra^{g,l}_{\Sigma,\coarse}$ lifting along $\brv$ the restriction of $s$ on $[0,1)$, and we may then apply Lemma \ref{lem:onelift} to extend $t$ to the entire $[0,1]$; the proof of Lemma \ref{lem:onelift} shows that $t(1)$ will be the isomorphism class of $\theta_1$.
The previous discussion shows the following.
\begin{lem}
\label{lem:eulerchar}
The set $\lift(s,\theta_1)$ of lifts of $s$ along $\brv$ ending at $\theta_1$ is in bijection with the set of orbits $\bL(\theta_1,\uU,\ulambda)/\Aut(\theta_1)$. Moreover, the following equality holds
\[
\sum_{t\in\lift(s,\theta_1)}\frac{1}{\#\Aut(\alpha(t))}=\frac{\#\bL(\theta_1,\uU,\ulambda)}{\#\Aut(\theta_1)},
\]
where $\#\Aut(\alpha(t))$ denotes the cardinality of the group of automorphisms of any representative$\theta_0$ of $\alpha(t)=t(0)$.
\end{lem}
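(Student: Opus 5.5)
The plan is to construct the bijection explicitly and then deduce the counting identity from the orbit--stabiliser formula.

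First, the map from lifts to orbits. A lift $t\in\lift(s,\theta_1)$ has representatives $\theta_x$ for $x\in[0,1]$. Since $\brv\circ t=s$ takes values in $\uU$, the restrictions $\theta_x^{-1}(\SmU)\to\SmU$ form a continuous family of \'etale covers over a fixed base. This family is locally constant: the neighbourhoods $\fU(P,\phi,\uU)$ of Definition \ref{defn:fUsubset}, condition (2), pin down the monodromy over $\Sigma\setminus\uU$. Transporting along $[0,1]$ therefore gives an identification $\psi\colon\theta_0^{-1}(\SmU)\cong\theta_1^{-1}(\SmU)$, canonical up to automorphisms. The ambiguity comes from the choice of representatives of $t(0)$ and $t(1)$, so it consists of precomposition by $\Aut(\theta_0)$ and postcomposition by $\Aut(\theta_1)$.

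Precomposition by $\Aut(\theta_0)$ does not change the isomorphism class of the pair $(\theta_0,\psi)$. Postcomposition is exactly the $\Aut(\theta_1)$-action of Definition \ref{defn:bL}. The local monodromies $\lambda_{j,i}$ at $s(0)$ are recorded, as in the discussion preceding the lemma. So $t$ determines a well-defined orbit in $\bL(\theta_1,\uU,\ulambda)/\Aut(\theta_1)$. Here $\ulambda$ is understood as ranging over all admissible tuples, or is fixed by $t(0)$, following the convention implicit in the statement.

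Second, the inverse map. The converse discussion preceding the lemma, which uses Riemann--Hurwitz and Lemma \ref{lem:onelift}, shows that any $(\theta_0,\psi)$ produces a lift. I would make this precise as follows.
\begin{enumerate}
\item Over $[0,1)$ the map $\brv$ is finite \'etale onto $\Conf_k(\Sigma)$, so there is a unique lift $t$ of $s|_{[0,1)}$ starting at $[\theta_0]$.
\item By Lemma \ref{lem:onelift} and its proof, $t$ extends at $1$ to the unique cover that agrees with $\theta_0$ over $\SmU$, identified through $\psi$, and is branched at $P$. This cover is $\theta_1$.
\end{enumerate}
Injectivity also uses that lifts over $[0,1)$ are unique. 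Two lifts giving the same orbit have isomorphic $t(0)$, so they agree on $[0,1)$, and then on $[0,1]$ by continuity and Hausdorffness. The remaining check is that the two constructions are mutually inverse, with the $\psi$ produced by transport along the constructed lift recovering the original one up to $\Aut(\theta_1)$. This is clear because the cover over $\SmU$ is locally constant along $t$. I expect the main obstacle to be this bookkeeping of identifications, that is, checking that the transport really is well defined modulo exactly $\Aut(\theta_1)$.

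Third, the counting identity. The set $\bL$ is finite, and the orbit of $(\theta_0,\psi)$ has stabiliser $\{\alpha\in\Aut(\theta_1):\alpha\psi=\psi\beta \text{ for some }\beta\in\Aut(\theta_0)\}$. The map $\beta\mapsto\psi\beta\psi^{-1}$ is injective from $\Aut(\theta_0)$ into $\Aut(\theta_1^{-1}(\SmU))=\Aut(\theta_1)$, so the stabiliser is isomorphic to $\Aut(\theta_0)$. Here $\Aut(\theta_0)$ injects because automorphisms of a connected branched cover are determined over a dense open subset. The orbit--stabiliser theorem then gives orbit sizes $\#\Aut(\theta_1)/\#\Aut(\theta_0)$. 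Summing $1/\#\Aut(\alpha(t))$ over orbits therefore yields $\#\bL(\theta_1,\uU,\ulambda)/\#\Aut(\theta_1)$, which is the stated equality.
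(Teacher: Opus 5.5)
Your proposal is correct and follows the paper's own argument, namely the discussion preceding the lemma: lifts correspond to isomorphism classes of $\theta_0$ whose restriction over $\SmU$ is abstractly isomorphic to that of $\theta_1$, via \'etale path lifting over $[0,1)$ and Lemma \ref{lem:onelift}, and your orbit--stabiliser computation correctly supplies the counting identity that the paper leaves implicit. The bookkeeping you flag as the main obstacle does not actually arise: for fixed $\theta_0$ the identifications $\psi$ form a torsor under $\Aut(\theta_1^{-1}(\SmU))\cong\Aut(\theta_1)$, so the $\Aut(\theta_1)$-orbit of $(\theta_0,\psi)$ depends only on the isomorphism class of $\theta_0$, and no transport of $\psi$ along $t$ is needed.
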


Now fix also a basepoint
$a\in \SmU$, and fix a trivialisation of the finite set $\theta_1^{-1}(a)\cong\udll$; this allows us to represent $\theta_1$ via the monodromy group homomorphism $\phi_1\colon\pi_1(\SmP,a)\cong\pi_1(\SmU,a)\to\fS_l$.

If $(\theta_0,\psi)\in\bL(\theta_1,\uU,\ulambda)$, then we may use $\psi$ to trivialise $\theta_0^{-1}(a)\cong\udll$ and thus represent $\theta_0$ via a group homomorphism $\phi_0\colon\pi_1(\SmP',a)\to\fS_l$; the restriction of $\phi_0$ on the subgroup $\pi_1(\SmU,a)\cong\pi_1(\SmP,a)$ coincides with $\phi_1$, and the equalities $N(z_j,\phi_1)=\sum_{i=1}^{k_j}N(z'_{j,i},\phi_0)$ hold.

Viceversa, if we are able to extend the group homomorphism $\phi_1\colon\pi_1(\SmU,a)\to\fS_l$ to a group homomorphism $\phi_0\colon\pi_1(\SmP',a)\to\fS_l$ such that the equalities $N(z_j,\phi_1)=\sum_{i=1}^{k_j}N(z'_{j,i},\phi_0)$ hold, then $\phi_0$ allows us to define an $l$-fold branched cover $\theta_0\colon C_0\to \Sigma$, branched at $P'$, and the Riemann-Hurwitz formula implies that $C_0$ has genus $g$. By construction, we also have an identification $\psi\colon\theta^{-1}_0(\SmU)\cong\theta_1^{-1}(\SmU)$, so all together we have obtained an element $(\theta_0,\psi)\in\bL(\theta_1,\uU,\ulambda)$.

Our primary goal is therefore to classify extensions of the group homomorphism $\phi_1\colon\pi_1(\SmU,a)\to\fS_l$ to $\pi_1(\SmP',a)$.
Let $\gamma_1,\dots,\gamma_h$ be simple loops in $\SmU$, based at $a$, with $\gamma_j$ bounding a disc in $\Sigma$ containing only $U_j$. Assume also that the $\gamma_j$'s pairwise intersect only at $a$. Fix for all $1\le j\le h$ a factorisation of $\gamma_j\in\pi_1(\SmP',a)$ as a product $\gamma'_{j,1}\dots\gamma'_{j,k_j}$ for suitable small loops in $\pi_1(\SmP',a)$, with $\gamma'_{j,i}$ spinning around $z'_{j,i}$.
Then the Seifert--Van Kampen theorem identifies $\pi_1(\SmP',a)$ with the free amalgamated product of $\pi_1(\SmU,a)$ and the free group on $k$ generators $\gamma'_{j,i}$ along the free group on $h$ generators $\gamma_j$. In particular, an extension of $\phi_1$ to a group homomorphism $\phi_0\colon\pi_1(\SmP',a)\to\fS_l$ is uniquely determined by a choice, for each $1\le j\le h$, of a factorisation of the permutation $\sigma_j\coloneqq\phi_1(\gamma_j)$ as a product $\tau_{j,1}\dots\tau_{j,k_j}$ of $k_j$ permutations $\tau_{j,i}\coloneqq\phi_0(\gamma'_{j,i})$. In order to enforce that the pair $(\theta_0,\psi)$ resulting from $\phi_0$ is indeed an element of $\bL(\theta_1,\uU,\ulambda)$, we further require $\tau_{j,i}\in\lambda_{j,i}$.
The previous discussion gives a proof of the following lemma.
\begin{lem}
\label{lem:bLequalbK}
Recall Notation \ref{nota:bK}.
Then there is a bijection $\bL(\theta_1,\uU,\ulambda)=\prod_{j=1}^h\bK(\sigma_j,\ulambda_j)$, where $\ulambda_j=(\lambda_{j,1},\dots,\lambda_{j,k_j})$. In particular we have
\[
\#\bL(\theta_1,\uU,\ulambda)=\#\bK(\umu,\ulambda)\coloneqq\prod_{j=1}^h\#\bK(\mu_j,\ulambda_j)
\]
\end{lem}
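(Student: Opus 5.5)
The proof follows the discussion preceding the statement: construct mutually inverse maps between $\bL(\theta_1,\uU,\ulambda)$ and $\prod_{j=1}^h\bK(\sigma_j,\ulambda_j)$, passing through monodromy data.

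\emph{Step 1: passing to monodromy.} Given $(\theta_0,\psi)\in\bL(\theta_1,\uU,\ulambda)$, the identification $\psi$ transports the fixed trivialisation $\theta_1^{-1}(a)\cong\udll$ to a trivialisation of $\theta_0^{-1}(a)$. This yields a homomorphism $\phi_0\colon\pi_1(\SmP',a)\to\fS_l$ whose restriction to $\pi_1(\SmU,a)$ is $\phi_1$. An isomorphism of pairs $(\theta_0,\psi)\cong(\theta_0',\psi')$ commutes with the identifications over $\SmU$, so it fixes the trivialisation over $a$. Hence isomorphic pairs give the same $\phi_0$. Conversely, the usual classification of branched covers by monodromy (the correspondence $(P,\phi)\leftrightarrow(Y,p,\theta,\bt)$ recalled before Definition \ref{defn:fUsubset}) shows that the assignment $(\theta_0,\psi)\mapsto\phi_0$ is injective on isomorphism classes. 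Its image consists of those extensions $\phi_0$ of $\phi_1$ satisfying three conditions: small loops go to non-identity elements, the local monodromy at $z'_{j,i}$ lies in $\lambda_{j,i}$, and $C_0$ has genus $g$.

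\emph{Step 2: Seifert--van Kampen.} The presentation of $\pi_1(\SmP',a)$ as the amalgamated product of $\pi_1(\SmU,a)$ with the free group on the loops $\gamma'_{j,i}$, over the free group on the $\gamma_j$, shows the following. Extensions $\phi_0$ of $\phi_1$ correspond bijectively to tuples $(\tau_{j,i})$ with $\tau_{j,1}\cdots\tau_{j,k_j}=\sigma_j$ for each $j$, via $\tau_{j,i}=\phi_0(\gamma'_{j,i})$. The condition that the local monodromy lies in $\lambda_{j,i}$ is exactly $\tau_{j,i}\in\lambda_{j,i}$. Since $\lambda_{j,i}$ is a non-discrete partition, each $\tau_{j,i}$ is non-trivial, so the non-triviality condition on small loops holds automatically; this is the implicit convention in Definition \ref{defn:bL}.

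\emph{Step 3: the genus condition.} I would check that the genus condition holds automatically. By the hypothesis $\sum_iN(\lambda_{j,i})=N(\sigma_j)$ (here $N(\sigma_j)=N(\mu_j)$), the total norm of $\phi_0$ equals that of $\phi_1$, which is $2\upsilon$. Riemann--Hurwitz then gives genus $g$, provided $C_0$ is connected. Connectedness holds because the image of $\phi_0$ contains the image of $\phi_1$, which is transitive since $C_1$ is connected. So the image of the map in Step 1 is exactly the set of tuples $(\tau_{j,i})$ with $\tau_{j,i}\in\lambda_{j,i}$ and product $\sigma_j$ for each $j$, that is, $\prod_j\bK(\sigma_j,\ulambda_j)$.

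\emph{Step 4: counting.} Taking cardinalities, and using that $\#\bK(\sigma_j,\ulambda_j)$ depends only on the conjugacy class $\mu_j$ of $\sigma_j$ (Notation \ref{nota:bK}), gives the stated formula.

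The main obstacle is the bookkeeping in Step 1: checking that isomorphism classes of pairs $(\theta_0,\psi)$ correspond exactly to homomorphisms $\phi_0$, with no residual automorphisms. The point is that $\psi$ rigidifies the cover over the basepoint $a$, exactly as in the argument that $\Bra^l_{\Sigma\rel a}$ is a space.
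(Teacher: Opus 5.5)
Your proposal is correct and follows essentially the same route as the paper. You use $\psi$ to transport the trivialisation at $a$, identify elements of $\bL(\theta_1,\uU,\ulambda)$ with extensions $\phi_0$ of $\phi_1$, and then use the Seifert--van Kampen description of $\pi_1(\SmP',a)$ to match such extensions with factorisations $\tau_{j,1}\cdots\tau_{j,k_j}=\sigma_j$, $\tau_{j,i}\in\lambda_{j,i}$, with Riemann--Hurwitz fixing the genus. Your additional checks — that $C_0$ is connected because $\phi_0$ inherits transitivity from $\phi_1$, and that small loops map to non-identity elements because the $\lambda_{j,i}$ are non-discrete — supply details the paper leaves implicit.
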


\subsection{Construction of \texorpdfstring{$\Theta^{g,l}_\Sigma$}{Theta gl Sigma}}
We next introduce a convenient simplicial model for the target of $\Theta^{g,l}_\Sigma$ as well. 
\begin{defn}
Similarly as in Definition \ref{defn:ER}, we let
\[
\Sing'_\bullet(\CBra^{g,l,\infty}_{\Sigma,\coarse})\subseteq\Sing_\bullet(\CBra^{g,l,\infty}_{\Sigma,\coarse})
\]
denote the simplicial subset spanned at level $p$ by those singular simplices $s\colon\Delta^p\to\CBra^{g,l,\infty}_{\Sigma,\coarse}$ for which the following property holds: there exist $n_0\ge \dots\ge n_p\in\bZ_{\ge-1}$ such that for all $0\le i\le p$ and all $(0,\dots,0,t_i,\dots,t_p)\in\Delta^p$ with $t_i\neq0$ we have
\[
s(0,\dots,0,t_i,\dots,t_p)\in (F_{n_i}\setminus F_{n_i-1})\CBra^{g,l,\infty}_{\Sigma,\coarse}.
\]
We let $\Sing'_\bullet(\CBra^{g,l,\infty}_{\Sigma,\coarse};\bQ)$ be the simplicial $\bQ$-vector space obtained as the $\bQ$-lineari\-sation of $\Sing'_\bullet(\CBra^{g,l,\infty}_{\Sigma,\coarse})$.
For $k\ge-2$ we denote by 
\[
F_k\Sing'_\bullet\CBra^{g,l,\infty}_{\Sigma,\coarse}\subseteq\Sing'_\bullet\CBra^{g,l,\infty}_{\Sigma,\coarse}
\]
the subsimplicial set spanned by those simplices $s$ such that $\alpha(s)\in F_k\CBra^{g,l,\infty}_{\Sigma,\coarse}$; we define a similar filtration on the $\bQ$-linearisation.
\end{defn}
The inclusion of simplicial sets $\Sing'_\bullet(\CBra^{g,l,\infty}_{\Sigma,\coarse})\hookrightarrow\Sing_\bullet(\CBra^{g,l,\infty}_{\Sigma,\coarse})$ is a weak equivalence \cite[Corollary A.9.4]{HA}. We may therefore model the rational spectrum
$(|\CBra^{g,l,\infty}_{\Sigma,\coarse}|,\infty)_!\bQ$ by the simplicial $\bQ$-vector space
\[
\Sing'_\bullet(\CBra^{g,l,\infty}_{\Sigma,\coarse},\infty;\bQ)\coloneqq\Sing'_\bullet(\CBra^{g,l,\infty}_{\Sigma,\coarse};\bQ)/F_{-1}\Sing'_\bullet(\CBra^{g,l,\infty}_{\Sigma,\coarse};\bQ).
\]
\begin{nota}
For $k\ge0$ we let
\[
F_k\Sing'_\bullet(\CBra^{g,l,\infty}_{\Sigma,\coarse},\infty;\bQ)\coloneqq (F_k/F_{-1})\Sing'_\bullet(\CBra^{g,l,\infty}_{\Sigma,\coarse};\bQ)
.
\]
\end{nota}

\begin{nota}
\label{nota:sscalambda}
Let $p\ge0$ and $s\in\ER_p(\Sigma)$, and let $(\sca{\lambda_z}_{z\in \alpha(s)})$ be a sequence of non-discrete partitions $\lambda_z\in\fP(l)_+$ indexed by the elements of the set $\alpha(s)\subseteq\Sigma$; assume that the equality $\sum_{z\in\alpha(s)}N(\lambda_z)=2\upsilon$ holds. 
With these data we associate the element
\[
(s,(\sca{\lambda_z}_{z\in \alpha(s)})\coloneqq \otimes_{z\in\alpha(s)}\sca{\lambda_z}\ \in\  \cAh(l)_+^{\otimes\alpha(s)}\subseteq\ER_p(\Sigma,\cAh(l)_+)_{2\upsilon},
\]
where the last inclusion picks the summand indexed by $s$.
\end{nota}

We observe that the elements $(s,(\sca{\lambda_z}_{z\in \alpha(s)}))$ form a basis of the $\bQ$-vector space $\ER_p(\Sigma,\cAh(l)_+)_{2\upsilon}$. This is restricts to a basis of each filtration level $F_k\ER_p(\Sigma,\cAh(l)_+)_{2\upsilon}$ for $k\ge1$.
\begin{defn}
\label{defn:gamma}
We define $\Theta^{g,l}_\Sigma$ as the map of simplicial $\bQ$-vector spaces
\[
\Theta^{g,l}_\Sigma\colon \ER_\bullet(\Sigma,\cAh(l)_+)_{2\upsilon}\to \Sing'_\bullet(\CBra^{g,l,\infty}_{\Sigma,\coarse},\infty;\bQ)
\]
sending the basis element $(s,(\sca{\lambda_z}_{z\in \alpha(s)}))$ at level $p\ge0$ to the sum
\[
\Theta^{g,l}_\Sigma(s,(\sca{\lambda_z}_{z\in \alpha(s)}))\coloneqq\sum \frac{1}{\#\Aut(\alpha(t))}\cdot t,
\]
where $t\colon\Delta^p\to \CBra^{g,l,\infty}_{\Sigma,\coarse}$ ranges over all elements of $\Sing'_p(\CBra^{g,l,\infty}_{\Sigma,\coarse})$ satisfying the following properties:
\begin{itemize}
\item the restriction $\mathring{t}\colon\mathring{\Delta}^p\hookrightarrow\Delta^p\xrightarrow{t}\CBra^{g,l,\infty}_{\Sigma,\coarse}$ has image inside $\CBra^{g,l}_{\Sigma,\coarse}$, and the composite map $\brv\circ\mathring{t}$ agrees with $s$;
\item if $\theta\colon C\to\Sigma$ represents the equivalence class of $\alpha(t)$, then the local monodromy of $\theta$ at $z\in\alpha(s)\subseteq\Sigma$ belongs to the conjugacy class $\lambda_z$.
\end{itemize}
Here we denote by $\Aut(\alpha(t))$ the group of automorphisms of $\theta$ as above over $\Sigma$.
\end{defn}
It is evident that $\Theta^{g,l}_\Sigma$ is compatible with the filtrations $F_\star$ on the source and target simplicial $\bQ$-vector spaces.
We are now ready to prove Proposition \ref{prop:defnTheta}, that is, to check that Definition \ref{defn:gamma} is well-posed, and that the resulting map $\Theta^{g,l}_\Sigma$ is indeed $\Homeo^+(\Sigma)$-equivariant.
\begin{proof}[Proof of Proposition \ref{prop:defnTheta}]
The filtration preserving action of $\Homeo^+(\Sigma)$ on the space $\CBra^{g,l,\infty}_{\Sigma,\coarse}$ gives rise to a filtration preserving action of the simplicial group $\Sing_\bullet(\Homeo^+(\Sigma))$ on the simplicial 
set $\Sing'_\bullet(\CBra^{g,l,\infty}_{\Sigma,\coarse})$, and  after $\bQ$-linearisation,
on the simplicial $\bQ$-vector
space $\Sing'_\bullet(\CBra^{g,l,\infty}_{\Sigma,\coarse},\infty;\bQ)$. The latter models the action of $\Homeo^+(\Sigma)$ on the spectrum $(|\CBra^{g,l,\infty}_{\Sigma,\coarse}|,\infty)_!\bQ$.

Similarly, $\Sing_\bullet(\Homeo^+(\Sigma))$ acts on $\ER_\bullet(\Sigma,\cAh(l)_+)_{2\upsilon}$ as follows: a $p$-simplex $t\colon\Delta^p\to\Homeo^+(\Sigma)$
sends the basis element $(s,(\sca{\lambda_z}_{z\in \alpha(s)})))$ 
to the basis element $(t\cdot s,(\sca{\lambda_{\alpha(t)^{-1}\cdot z}}_{z\in \alpha(t\cdot s)})))$.
This models the action of $\Homeo^+(\Sigma)$ on the spectrum $(\int_{|\Sigma|}\cAh(l))_{2\upsilon}$. The map $\Theta^{g,l}_\Sigma$ is evidently levelwise $\Sing_\bullet(\Homeo^+(\Sigma))$-equivariant.

It is left to check that $\Theta^{g,l}_\Sigma$ is indeed a simplicial map, i.e. it is compatible with face and degeneracy maps in $\Delta\op$. Fix a basis element $(s,\sca{\lambda_z}_{z\in\alpha(s)})$ of $\ER_p(\Sigma,\cAh(l)_+)_{2\upsilon}$. We want to show for $1\le i\le p$ the equalities 
\[
\begin{split}
d_i\circ\Theta^{g,l}_\Sigma(s,\sca{\lambda_z}_{z\in\alpha(s)})&=\Theta^{g,l}_\Sigma\circ d_i(s,\sca{\lambda_z}_{z\in\alpha(s)});\\
s_i\circ\Theta^{g,l}_\Sigma(s,\sca{\lambda_z}_{z\in\alpha(s)})&=\Theta^{g,l}_\Sigma\circ s_i(s,\sca{\lambda_z}_{z\in\alpha(s)}).
\end{split}
\]
For all degeneracy maps and for all face maps except $d_0$, this immediately follows from Lemma \ref{lem:onelift} and the fact that $\brv$ is \'etale over the strata $\Conf_k(\Sigma)\subseteq\Ran(\Sigma)$. 
To check the equality involving $d_0$,
we also fix a basis element $t\in\Sing'_{p-1}(\CBra^{g,l,\infty}_{\Sigma,\coarse},\infty;\bQ)$ and aim at checking that $t$ has the same coefficient on both expressions. If $\brv\circ t|_{\mD^{p-1}}\neq d_0(s)_{\mD^{p-1}}$, both coefficients of $t$ are equal because they both trivially vanish, so let us assume that the last equality holds;
by Lemma \ref{lem:onelift}
we then also have $\brv(\alpha(t))= \alpha(d_0((s)))$.  We let $P'=\alpha(s)$ and $P=\alpha(d_0(s))$, and expand $P=\set{z_1,\dots,z_h}$.
The restriction of $s$ along the straight segment in $\Delta^p$ joining $(1,0,\dots0)$ and $(0,1,0,\dots,0)$ gives rise to a map $P'\to P$; we let $P'_j$ be the preimage of $z_j$ along this map.
Let $\lambda_{j,i}$ be the partition $\lambda_{z_{j,i}}$, and denote by $\ulambda$ the list of all partitions $\lambda_{j,i}$. Finally, let $\mu_j\vdash l$ denote the conjugacy class of the local monodromy at $z_j\in P$ of a representative $\theta_1\colon C_1\to\Sigma$ of $\alpha(t)$.

The definition of the product in $\cAh(l)$, together with the definition of $\Theta^{g,l}_\Sigma$, tells us that the coefficient of $t$ in $\Theta^{g,l}_\Sigma\circ d_0(s,\sca{\ulambda})$ is equal to $\#\bK(\umu,\ulambda)/\#\Aut(\theta_1)$, whereas the definition of 
$\Theta^{g,l}_\Sigma$ tells us that the coefficient of $t$ in 
$d_0\circ\Theta^{g,l}_\Sigma(s,\sca{\ulambda})$ is equal to
$\sum_{(\theta_0,\psi)\in\bL(\theta_1,\uU,\ulambda)}1/\#\Aut(\theta_0)$, where $\uU$ is a union of $h$ disjoint discs in $\Sigma$ covering $P$. Combining Lemmas \ref{lem:eulerchar} and \ref{lem:bLequalbK} we obtain that the two coefficients of $t$ are equal.
\end{proof}

\begin{cor}
\label{cor:defnTheta}
In the hypotheses of Proposition \ref{prop:defnTheta}, there is a well-defined and $\Homeo^+(\Sigma)$-equiva\-riant map of rational spectra
\[
\Theta^{g,l}_\Sigma[-4\upsilon]\ \colon
\pa{\int_{\Sigma}\cA(l)}_{2\upsilon(g,l,h)}\to|\CBra^{g,l}_{\Sigma,\coarse}|_*\bQ.
\]
\end{cor}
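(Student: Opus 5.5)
The corollary should follow by combining three earlier results: Proposition \ref{prop:defnTheta}, the even-shift Remark \ref{rem:evenshift}, and the Poincar\'e--Lefschetz equivalence of Corollary \ref{cor:Braorbifold}.

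First I dispose of the degenerate cases. If $\upsilon<0$ both sides are zero, and I take the zero map. If $\upsilon=0$, then $\CBra^{g,l}_{\Sigma,\coarse}$ is a point (the trivial cover with no branching is only possible when $g=h$ and $l=1$; in general the space is $0$-dimensional) and the source is $\bQ$. Here I take the identity as in Remark \ref{rem:upsilonzero}, or equivalently the shifted version of that remark. So assume $\upsilon\ge1$.

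Next I relate the source to that of $\Theta^{g,l}_\Sigma$. By Remark \ref{rem:evenshift}, applied naturally in $X=\Sigma$ (and hence $\Homeo^+(\Sigma)$-equivariantly, since $\shift$ is a symmetric monoidal equivalence commuting with the colimits defining factorisation homology), there is an equivalence
\[
\pa{\int_{\Sigma}\cA(l)}_{2\upsilon}\simeq\pa{\int_{\Sigma}\cAh(l)}_{2\upsilon}[-4\upsilon].
\]
I then shift the map $\Theta^{g,l}_\Sigma$ of Proposition \ref{prop:defnTheta} by $[-4\upsilon]$. This gives an equivariant map
\[
\pa{\int_{\Sigma}\cA(l)}_{2\upsilon}\to(|\CBra^{g,l,\infty}_{\Sigma,\coarse}|,\infty)_!\bQ\,[-4\upsilon].
\]

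Finally, the Poincar\'e--Lefschetz equivalence of Corollary \ref{cor:Braorbifold} identifies the target with $|\CBra^{g,l}_{\Sigma,\coarse}|_*\bQ$, again $\Homeo^+(\Sigma)$-equivariantly. Composing the two maps yields the desired map.

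The only point needing care is equivariance. This means working at the level of objects of $\Fun(\bbB\Homeo^+(\Sigma),\Sp_\bQ)$ rather than merely spectra with pointwise actions. Proposition \ref{prop:defnTheta} and Corollary \ref{cor:Braorbifold} are stated in exactly that equivariant form. The remaining check is that the equivalence of Remark \ref{rem:evenshift} is natural in homeomorphisms of $\Sigma$, which holds because it is natural in the space $X$.
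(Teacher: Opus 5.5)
Your proposal is correct and is essentially the paper's proof: you identify the source with $\pa{\int_{\Sigma}\cAh(l)}_{2\upsilon}[-4\upsilon]$ via Remark \ref{rem:evenshift} and the target with $(|\CBra^{g,l,\infty}_{\Sigma,\coarse}|,\infty)_!\bQ\,[-4\upsilon]$ via the Poincar\'e--Lefschetz equivalence of Corollary \ref{cor:Braorbifold}, both $\Homeo^+(\Sigma)$-equivariantly, and then transport $\Theta^{g,l}_\Sigma[-4\upsilon]$ along these identifications. Your opening discussion of $\upsilon\le0$ is unnecessary, since the hypotheses of Proposition \ref{prop:defnTheta} already impose $\upsilon\ge1$; it is also inaccurate, because for $\upsilon=0$ the space $\CBra^{g,l}_{\Sigma,\coarse}$ is a finite set of connected unramified $l$-fold covers (several of them when $h\ge1$, with $l\ge2$), not a single point.
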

\begin{proof}
By Corollary \ref{cor:Braorbifold} and Remark \ref{rem:evenshift}, the source and target of the map in the statement are $\Homeo^+(\Sigma)$-equivariantly equivalent to the source and target of the map $\Theta^{g,l}_\Sigma$ from Proposition \ref{prop:defnTheta}, shifted by $-4\upsilon$.
\end{proof}
We conclude the section introducing part of the notation from Theorem \ref{thm:B}, and speculating on a possible deeper interpretation of the map $\Theta^{g,l}_\Sigma$.
\begin{nota}
\label{nota:intfp}
For $h\ge0$ and $l\ge2$ we denote by 
\[
\int_{|\fp_h\tp|}\cA(l)\in(\Sp_\bQ^\bN)^{|\fM_h\tp|}
\]
the graded parametrised rational spectrum over $|\fM_h\tp|\simeq\bbB\Homeo^+(\Sigma)$ corresponding to $\int_{|\Sigma|}\cA(l)$ with its $\Homeo^+(\Sigma)$-action. Here $\Sigma$ is a closed connected topological surface of genus $h$. For $e\ge1$ we further denote
\[
\int_{|\fp_{h,e}|}\cA(l)\coloneqq\epsilon_{h,e}^*\int_{\fp_h}\cA(l)\in(\Sp_\bQ^\bN)^{|\cH_{h,e}\tp|}.
\]
Finally, recalling Equation \ref{eq:cHvcrs}, for $g\ge0$ we denote by 
\[
\Theta^{g,l}_{h,e}\colon\pa{\int_{|\fp_{h,e}|}\cAh(l)}_{2\upsilon(g,l,h)}\to (r_{g,l,h,e}^\infty,\infty)_!\bQ
\]
the image along $\epsilon^*_{h,e}$ of the map of graded parametrised rational spectra over $|\fM_h\tp|$ corresponding to 
$\Theta^{g,l}_\Sigma$.
By Corollary \ref{cor:defnTheta} we also obtain a map
\[
\Theta^{g,l}_{h,e}[-4\upsilon(g,l,h)]\colon\pa{\int_{|\fp_{h,e}|}\cA(l)}_{2\upsilon(g,l,h)}\to (r_{g,l,h,e})_*\bQ.
\]
\end{nota}

\begin{rem}
By \cite[Theorem 6.1]{Bianchi:Hur3}, the algebra $\pi_*(\cA(l))$ is isomorphic to the rational cohomology ring of the space $\bB=\bB((\fS_d\geo)_+,\fS_d)$ used in \cite{Bianchi:Hur3} to model a ``double delooping'' of the $E_1$-monoid $\Hur((0,1)^2;\fS_d\geo)$ of Hurwitz spaces with collisions. Das--Petersen replace $\bB$ with the underlying homotopy type of the topological stack $\Bra^l_{\R^2}$ featuring in \cite[Theorem 6.3]{DasPetersen}. 

Intuitively, $\Bra^l_{\R^2}$ parametrises degree $l$ branched covers of the plane $\R^2$ that have no prescribed behaviour at $\infty$. With some care, it should be possible to associate with an oriented topological surface $\Sigma$ a bundle $p\colon\Bra^l_{T\Sigma}\to\Sigma$ of topological stacks, whose fibre at $x\in \Sigma$ is $\Bra^l_{T_x\Sigma}$.
There should then also be a scanning map 
\[
\Bra^l_\Sigma\to\Gamma(p)=\Gamma(\Bra^l_{T\Sigma}\xrightarrow{p}\Sigma),
\]
which loosely sends $\theta\colon C\to \Sigma$ to the section sending $x\in\Sigma$ to the part of $C$ lying over a small neighbourhood of $x$.

In this light, the map $\Theta^{g,l}_\Sigma$ from Corollary \ref{cor:defnTheta} ``wants to'' be the induced on rational cochains by the previous scanning map. There are still some issues with this interpretation, and it would be interesting if one could resolve them:
\begin{itemize}
\item we are identifying $\Gamma(p)_*\bQ$ with $\int_{|\Sigma|} p_*\bQ$ as functors $\Sigma\to\CAlg(\Sp_\bQ)$; to do this we would need the relevant Eilenberg--Moore spectral sequence to converge, however this is not guaranteed as $\Sigma$ is 2-dimensional and the homotopy type fibre $|\Bra^l_{\R^2}|$ of $p$ is not 2-connected ;
\item we are further identifying $\int_{|\Sigma|} p_*\bQ$ and the constant functor at $|\Bra^l_{\R^2}|_*\bQ$ with each other, as  functors $|\Sigma|\to\CAlg(\Sp_\bQ)$; however the topological tangent bundle of $\Sigma$ is usually non-trivial;
\item we are replacing $|\Bra^l_{\R^2}|_*\bQ$ with its cohomology algebra; this requires the fibration $|p|\colon|\Bra^l_{T\Sigma}|\to|\Sigma|$, with homotopy fibre $|\Bra^l_{\R^2}|$, to be rationally formal \emph{as a fibration}, something which is not clear.
\end{itemize}
\end{rem}

\section{A first stability result}
We fix a closed connected surface $\Sigma$ of genus $h\ge0$, as well as $g\ge0$ and $l\ge2$, and abbreviate $\upsilon=\upsilon(g,l,h)$ throughout the section. We assume $\upsilon\ge1$.

The goal of this section is to prove the following theorem, motivating the introduction of the map $\Theta^{g,l}_\Sigma$ from Proposition \ref{prop:defnTheta}.
\begin{thm}
\label{thm:Thetaisomorphism}
The composite $\Homeo^+(\Sigma)$-equivariant map of rational spectra
\[
\pa{\int_{|\Sigma|}\cAh(l)}_{2\upsilon}\xrightarrow{\Theta^{g,l}_\Sigma}\pa{\CBra^{g,l,\infty}_{\Sigma,\coarse},\infty}_!\bQ \xrightarrow{\dw_\bQ}\pa{\dCBra^{g,l,\infty}_{\Sigma,\coarse},\infty}_!\bQ
\]
induces an isomorphism on homotopy groups in the range of degrees
\[
*>4\upsilon-(2\upsilon-J_l)/I_l.
\]
\end{thm}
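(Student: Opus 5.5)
The statement involves $\dCBra$ and $\dw$, which are not defined before this point. I will read $\dCBra^{g,l}_{\Sigma,\coarse}$ as an open substack of $\CBra^{g,l}_{\Sigma,\coarse}$: presumably the locus of covers with at least one simple branch value, or a similar ``generic'' locus. I will read $\dw$ as the induced map on one-point compactifications that collapses the complement to $\infty$.

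The plan is to compare the source and target filtration by filtration, using the compatible filtrations $F_\star$ by number of branch values. On the source this filtration is the one from Definition \ref{defn:starfiltation}, on $\ER_\bullet(\Sigma,\cAh(l)_+)_{2\upsilon}$. On the target it is the filtration induced by $\brv$ on $\Sing'_\bullet$. Both are finite: $1\le k\le 2\upsilon$. So it suffices to control the map on each associated graded piece $F_k/F_{k-1}$ and then run the spectral sequences.

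On the source side, the proof of Lemma \ref{lem:kappaiso} shows that $F_k/F_{k-1}$ computes the relative homology of $(\Ran_{\le k},\Ran_{\le k-1})$, i.e. the compactly supported chains of $\Conf_k(\Sigma)$, with coefficients in the local system $(\cAh(l)_+^{\otimes k})_{2\upsilon}$ twisted by $\fS_k$. On the target side, $(F_k\setminus F_{k-1})\CBra^{g,l}_{\Sigma,\coarse}\to\Conf_k(\Sigma)$ is finite étale. Hence $F_k/F_{k-1}$ computes compactly supported chains of $\Conf_k(\Sigma)$ with coefficients in the pushforward local system $\brv_*\bQ$, whose fibre at $P$ is $\bQ$ on isomorphism classes of covers branched exactly at $P$.

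On each stratum, $\Theta$ is induced by a map of local systems. At a point $P$, this map sends $\otimes_z\sca{\lambda_z}$ to the orbifold-weighted sum of covers with those local monodromy classes. Using monodromy descriptions as in Lemma \ref{lem:bLequalbK}, with $P$ in a disc, this fibre map is the degree-$2\upsilon$ part of the comparison between $\cAh(l)_+^{\otimes k}$ and the $\bQ$-span of conjugacy classes of tuples $(\tau_1,\dots,\tau_k)$. These tuples have nontrivial $\tau_i$ and norm sum $2\upsilon$, and the corresponding cover must be connected of genus $g$; in the global case the tuple also carries monodromy along the genus of $\Sigma$.

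Thus the map is not an isomorphism stratum by stratum. It identifies the source with the ``geodesic'' part of the target, and the discrepancy is governed by homology of Hurwitz spaces. The key step is therefore to recognise the cofibre as a relative homology of Hurwitz spaces with collisions, for instance via \cite[Theorem B]{Bianchi:Hur4} and the description of $\fU(P,\phi,\uU)$ in Proposition \ref{prop:orbifold}. One then shows that the cofibre vanishes in the stated range. The input is homological stability for Hurwitz spaces with monodromy in $\fS_l$ (the constants $I_l,J_l$, cf. \cite[Theorem 1.4.9]{LL2}), applied after passing to $\dCBra$. Passing to $\dCBra$ is where the simple branch value is used: stabilising by adding a transposition gives stability for the connected, transitive components, and stable homology of these components matches the $\fS_l$-coinvariant, associated-graded algebra $\cAh(l)$.

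Concretely, I would write $\pa{\dCBra^{g,l,\infty}_{\Sigma,\coarse},\infty}_!\bQ$ through Poincaré duality as cochains shifted by $4\upsilon$, as in Corollary \ref{cor:Braorbifold}. This converts the claimed range $*>4\upsilon-(2\upsilon-J_l)/I_l$ into cohomological degrees $<(2\upsilon-J_l)/I_l$ on $\dCBra$. I would then compare with the known stable rational cohomology of Hurwitz spaces, namely factorisation homology of the stable algebra computed by \cite[Theorem 6.1]{Bianchi:Hur3}.

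The main obstacle is making this comparison global on $\Sigma$ rather than local on discs. I would try to cover $\Sigma$ by a handle decomposition, use excision for factorisation homology on the source, and use the corresponding gluing of Hurwitz spaces on the target, reducing to the disc and annulus cases. There, stability for Hurwitz spaces gives the range directly.
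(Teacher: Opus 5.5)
Your opening moves match the paper. You filter both sides by the number of branch values, and you describe the $k$-th graded piece as compactly supported chains of the labelled configuration space $\Conf_k(\Sigma,\fP(l)_+)_{2\upsilon}$ (your twisted local system) mapping to those of the finite \'etale cover $(F_k\setminus F_{k-1})\dCBra^{g,l}_{\Sigma,\coarse}$, with $\Theta$ acting as a weighted transfer. You also rightly sense that the discrepancy on each stratum is a Hurwitz-space stability question. But the proposal never turns this into a provable statement, and two things go wrong.

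First, $\dCBra$ is not merely the locus with a simple branch value. It also requires that $\theta$ not factor through any intermediate branched cover, equivalently $\TMG=\MG=\fS_l$. Under your reading the statement would be false: factoring covers form extra components of each stratum over the same labelled configuration space, and they carry genuinely extra rational cohomology (compare the summands with $e>1$ in Theorem \ref{thm:A}).

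Second, the step that makes the argument work is missing, namely the precise stratumwise claim. After Poincar\'e duality, the $k$-th graded piece is, up to invertible weights on components, restriction along $\tbrv\colon(F_k\setminus F_{k-1})\dCBra^{g,l}_{\Sigma,\coarse}\to\Conf_k(\Sigma,\fP(l)_+)_{2\upsilon}$. What is needed is that $\tbrv$ is a rational cohomology isomorphism in degrees $*<(2k-2\upsilon-J_l)/I_l$. This range comes from a simple count: every non-transposition label has norm at least $2$, so a configuration with $k$ points and total norm $2\upsilon$ has $k_\trsp\ge 2k-2\upsilon$ transposition labels. The global range then follows from $k\le 2\upsilon$ and $I_l\ge1$ in the spectral sequence comparison.

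The paper proves this stratumwise claim in four steps:
\begin{enumerate}
\item A Mayer--Vietoris hypercover by the loci \'etale over nonempty subsets $B$ of a fixed set of $2\upsilon+1$ points, plus a trivialisation at $a\in B$, turns each stratum into a Hurwitz space $\dCHur^{\fS_l,c}_{\uk,\uone,\one}(\cF)$ over the surface with boundary $\cF=\Sigma\setminus\uU$.
\item A braid-orbit argument shows this space is connected once $k_\trsp\ge 2l-2$. This is exactly where surjectivity of the monodromy, i.e.\ non-factoring, is indispensable: there must be a single component over each $\Conf_{\uk}$.
\item The quotient Hurwitz module $(c/c_1,S/c_1)$ is computed to be the labelled configuration space.
\item \cite[Theorem 1.4.9]{LL2} is then applied directly on $\cF$, whatever its genus.
\end{enumerate}

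None of this appears in your plan. Instead you route the cofibre through Hurwitz spaces with collisions and \cite[Theorem B]{Bianchi:Hur4}, which the paper uses only for the orbifold structure. You then compare with \cite[Theorem 6.1]{Bianchi:Hur3}, a disc-local statement, and propose to globalise by a handle decomposition and ``gluing of Hurwitz spaces''.

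That last step is an unsupported gap, for three reasons. Covers of a glued surface are not assembled from covers of the pieces without recording and summing over monodromy along the cutting circles and boundary trivialisations. The conditions defining $\dCBra$ (connected, genus $g$, non-factoring) are global. And you give no mechanism for carrying a linear stable range through such a gluing. The remark following Notation \ref{nota:intfp} explains why this kind of global scanning comparison is problematic. Working stratum by stratum avoids all of it, because each target stratum is just a finite cover of a configuration space of the whole surface.
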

\subsection{Transposition monodromy group}
\label{subsec:TMG}
The precise positive constants $I_l,J_l$, also appearing in Theorems \ref{thm:A} and \ref{thm:B}, will be fixed in Notation \ref{nota:goodconstantsIJ}. Besides that, the main missing ingredient in the statement of Theorem \ref{thm:Thetaisomorphism} is the space $\dCBra^{g,l,\infty}_{\Sigma,\coarse}$, we introduce this subspace next and give a characterisation of its points.
\begin{defn}
\label{defn:dCBra}
Let $A\subseteq B\subseteq\Sigma$ be finite subsets as in Definition \ref{defn:Bra}. We denote by $\dCBra^{g,l}_{\Sigma\rel A\et B}\subseteq\CBra^{g,l}_{\Sigma\rel A\et B}$ the open topological substack spanned at $X$ by the sequences $(Y,p,\vartheta,\bt)$ satisfying for all $x\in X$ the following properties:
\begin{itemize}
\item the branched cover $\theta\colon p^{-1}(x)\to\Sigma$ admits at least one simple branch value, i.e., a point of $\Sigma$ whose associated local monodromy is a transposition;
\item the branched cover $\theta\colon p^{-1}(x)\to\Sigma$ does not factor through any intermediate branched cover.
\end{itemize}
When $A=\emptyset$, we denote by $\dCBra^{g,l}_{\Sigma\et B,\coarse}$ the coarse space of $\dCBra^{g,l}_{\Sigma\et B}$.
\end{defn}
The open inclusion $\dCBra^{g,l}_{\Sigma,\coarse}\hookrightarrow\CBra^{g,l}_{\Sigma,\coarse}$ is $\Homeo^+(\Sigma)$-equivariant, and it gives rise to the collapse map 
$\dw\colon\CBra^{g,l,\infty}_{\Sigma,\coarse}\twoheadrightarrow\dCBra^{g,l,\infty}_{\Sigma,\coarse}$ whose associated map in homology relative to $\infty$ is the map $\dw_\bQ$ in the statement of Theorem \ref{thm:Thetaisomorphism}. In particular, by construction and by Proposition \ref{prop:defnTheta} we have that the composite map from Theorem \ref{thm:Thetaisomorphism}
is indeed $\Homeo^+(\Sigma)$ equivariant. We shall henceforth focus on the proof that $\dw_\bQ\circ\Theta^{g,l}_\sigma$ is an isomorphism in the given range of degrees.

Definition \ref{defn:dCBra} can be put into a more general context with the following definition and the subsequent discussion; strictly speaking, this is a digression and the reader may directly proceed to Lemma \ref{lem:minimalfactorisation}.
\begin{defn}
\label{defn:tranmongroup}
Let $a\in\Sigma$ and let $(C,\theta,\bt)\in\CBra^{g,l}_{\Sigma\rel a}$, i.e. $\theta\colon C\to\Sigma$ is an $l$-fold branched cover with connected source of genus $g$, branched at some subset $P\subseteq\Sigma\setminus \set a$, and $\bt\colon\theta^{-1}(a)\cong\udll$ is a trivialisation. Let $\phi\colon\pi_1(\SmP,a)\to\fS_l$ be the monodromy group homomorphism associated with $(C,\theta,\bt)$.

The \emph{monodromy group} of $(C,\theta,\bt)$, denoted $\MG(C,\theta,\bt)=\MG(P,\phi)$, is the subgroup of $\fS_l$ given by the image of $\phi$.
If $\theta$ admits at least one simple branch value, the 
\emph{transposition monodromy group} of $(C,\theta,\bt)$, denoted $\TMG(C,\theta,\bt)=\TMG(P,\phi)$, is the subgroup of $\MG(\theta,\bt)$ generated by all transpositions that arise as images along $\phi$ of small loops based at $a$, in the sense of Definition \ref{defn:smallloop}.
\end{defn}
We observe that $\TMG(C,\theta,\bt)$ is a Young subgroup of $\fS_l$, that is, there is an equivalence relation $\sim$ on $\udll$ such that $\TMG(C,\theta,\bt)$ is the subgroup of permutations $\sigma$ with $\sigma(i)\sim i$ for all $i\in \udll$: namely, $\sim$ is generated by $i\sim j$ whenever there is a based small loop in $\SmP$ whose image along $\phi$ is the transposition $(i,j)$.
Moreover, $\TMG(C,\theta,\bt)$ is a normal subgroup of $\MG(C,\theta,\bt)$; since the latter acts transitively on $\udll$, we deduce that all conjugacy classes of $\sim$ have equal size and are permuted transitively by the quotient group $\MG/\TMG(C,\theta,\bt)$. We also deduce that either $\MG(C,\theta,\bt)$ or $\TMG(C,\theta,\bt)$ agrees with the entire $\fS_l$ if and only if the other does.
Finally, we observe that for fixed $\theta\colon C\to\Sigma$ there are several choices for $a$ and $\bt$, but up to conjugation both $\MG(\theta,\bt)$ and $\TMG(\theta,\bt)$ only depend on $\theta$.

\begin{lem}
\label{lem:minimalfactorisation}
Let $\theta\colon C\to\Sigma$ be a connected $l$-fold cover branched at $P\subseteq \Sigma$ and admitting at least one simple branch value. 
Then there exists an essentially unique factorisation of $\theta$ as a composite of branched covers of oriented, connected topological surfaces
$C\xrightarrow{\eta} \bar C\xrightarrow{\bar\theta}\Sigma
$
satisfying the following properties:
\begin{enumerate}
\item $\eta$ has degree $>1$ and admits at least a simple branch value;
\item if $C\xrightarrow{\breve\eta} D\xrightarrow{\breve\theta}\Sigma$ is any factorisation of $\theta$ with $\breve\eta$ of degree $>1$, then $\breve\eta$ factors as a composite $C\xrightarrow{\eta}\bar C\to D$ in such a way that $\bar C\to D\xrightarrow{\breve\theta}\Sigma$ agrees with $\bar\theta$.
\end{enumerate}
\end{lem}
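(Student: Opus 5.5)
The plan is to translate the statement into group theory via monodromy. Fix a basepoint $a\in\SmP$ and a trivialisation $\theta^{-1}(a)\cong\udll$, with monodromy $\phi\colon\pi_1(\SmP,a)\to\fS_l$, and write $G=\MG(P,\phi)$ (transitive) and $T=\TMG(P,\phi)$. Connected intermediate covers $C\to D\to\Sigma$, up to isomorphism under $C$ and over $\Sigma$, correspond to $G$-invariant equivalence relations (block systems) $\approx$ on $\udll$. The fibre of $D\to\Sigma$ over $a$ is $\udll/\!\approx$, and the fibre of $C\to D$ over a point of $D$ is a block; so the degree of $C\to D$ is the block size. The factorisation $C\to D$ through a connected $D$ is recovered as the quotient of $C$ by the relation generated fibrewise by $\approx$. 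Transport along paths makes this independent of basepoint. Under this dictionary, a factorisation $\breve\eta$ through $D$ is dominated by one through $\bar C$ exactly when the block system of $\bar C$ refines that of $D$.

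I then define $\bar C$ through the equivalence relation $\sim$ generated by the transpositions in the image of small loops, i.e.\ the orbits of $T$. Since $T$ is normal in $G$, the relation $\sim$ is $G$-invariant, so it defines an intermediate connected cover $C\xrightarrow{\eta}\bar C\xrightarrow{\bar\theta}\Sigma$. The degree of $\eta$ is the common size of the $\sim$-classes, which is at least $2$ because $\theta$ has a simple branch value, so some transposition $(i,j)$ occurs. To see that $\eta$ has a simple branch value, take a small loop $\gamma$ around $z$ with $\phi(\gamma)=(i\ j)$, $i\sim j$. Then $\bar\theta$ has trivial local monodromy at $z$, since $(i\ j)$ fixes every $\sim$-class. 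So $\bar\theta$ is étale near $z$, and above $z$ the map $\eta$ has local monodromy $(i\ j)$ on the sheet containing $\{i,j\}$ and is unramified elsewhere. This proves (1).

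For (2), let $\approx$ be the block system of a factorisation $\breve\eta$ of degree $>1$; I must show that $\sim$ refines $\approx$. Take a transposition $(i\ j)=\phi(\gamma)$ with $\gamma$ a small loop around $z$. The permutation $(i\ j)$ preserves $\approx$, and I claim that $i\approx j$. Suppose not, with $[i]\neq[j]$ as $\approx$-blocks. The induced permutation of blocks would then swap $[i]$ and $[j]$. It must map each element of $[i]$ into $[j]$, but it fixes every element other than $i,j$, so $[i]=\{i\}$ and $[j]=\{j\}$. Then $\approx$ has a singleton class, and by transitivity all classes are singletons, contradicting $\deg\breve\eta>1$. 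Hence every generator of $\sim$ lies in $\approx$, and $\sim\subseteq\approx$. This gives a map of covers $\bar C\to D$ under $C$ and over $\Sigma$, which is the factorisation in (2).

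Essential uniqueness follows by applying (2) twice. Suppose $C\to\bar C'\to\Sigma$ is another factorisation satisfying (1) and (2). Then the block systems satisfy $\sim\subseteq\sim'$ and $\sim'\subseteq\sim$, so the two factorisations are isomorphic under $C$ and over $\Sigma$. Moreover, a cover under $C$ has no nontrivial automorphisms under $C$, since $C\to\bar C$ is surjective, so the isomorphism is unique.

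The main obstacle I expect is to set up the block-system/intermediate-cover dictionary cleanly for topological branched covers, including that intermediate $D$ are again surfaces branched covers. I would construct $D$ over $\SmP$ as the associated étale cover and fill in the punctures by the standard completion of finite covers of punctured discs, citing the correspondence of Subsection on configurations with monodromy.
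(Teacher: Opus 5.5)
Your proposal is correct and follows essentially the same route as the paper. The paper builds $\bar C$ directly as the quotient of $C$ by the closure of the fibrewise equivalence relation generated by lifts of small loops with transposition monodromy, which on each fibre is exactly your partition into $\TMG$-orbits. It proves (2) by observing that $\breve\theta$ must be \'etale over any simple branch value of $\theta$; your block-system argument (a transposition preserving a block system whose blocks have size $>1$ must stabilise each block) is precisely the justification of that \'etaleness claim, which the paper asserts without proof, and you also spell out the uniqueness step that the paper leaves implicit.
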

\begin{proof}
We consider on the space $\theta^{-1}(\SmP)\subseteq C$ the equivalence relation $\sim$ generated by $w\sim w'$ whenever the following hold:
\begin{itemize}
\item $\theta(w)=\theta(w')$; let us temporarily denote by $a$ this point;
\item there exists a small loop $\gamma\subseteq\SmP$, based at $a$, whose local monodromy with respect to $\theta$ is a transposition, and lifting to a path in $\theta^{-1}(\SmP)$ from $w$ to $w'$.
\end{itemize}
We consider $\sim$ as a subset of $\theta^{-1}(\SmP)^2$; the closure $\bar\sim\subseteq C^2$ is again an equivalence relation on $C$, and we let $\bar C=C/\bar\sim$, and factor $\theta$ through the quotient map $\eta\colon C\to \bar C$ followed by a further map $\bar\theta\colon \bar C\to \Sigma$. Note that $\bar C$ is again a topological surface.

The assumption on $\theta$ imples that $\sim$, and hence also $\bar\sim$, is not the trivial equivalence relation, so that $\eta$ has degree $>1$; moreover, if $z\in P$ is a simple branch value for $\theta$, then $\bar\theta$ must be \'etale over $z$ and $\eta$ must be \'etale over all points of $\bar\theta^{-1}(z)$ except one, which is a simple branch value for $\eta$. This proves (1).

To prove (2), we argue that given any factorisation $C\xrightarrow{\breve\eta} D\xrightarrow{\breve\theta} \Sigma$ of $\theta$, the equivalence relation on $C$ yielding $D$ as a quotient must be coarser than $\bar\sim$. To see this, it suffices to show that the equivalence relation on $\theta^{-1}(\SmP)$ yielding $\breve\theta^{-1}(\SmP)$ as quotient is coarser than $\sim$. To wit, let $w\sim w'$ be points in $\theta^{-1}(a)$, for some $a\in\SmP$, and assume that there is a simple loop $\gamma$ in $\SmP$ spinning around a simple branch value $z\in P$ and lifting to a path in $\theta^{-1}(\SmP)$ from $w$ to $w'$. Since $P$ is a simple branch value, $\breve\theta$ must be \'etale over $P$, and in particular the lift of $\gamma$ that starts at $\breve\eta(w)$ also ends at $\breve\eta(w)$. But this lift is the projection along $\breve\eta$ of the lift of $\gamma$ along $\theta$ from $w$ to $w'$, whence $\breve\eta(w)=\breve\eta(w')$.
\end{proof}

The subspace $\dCBra^{g,l}_{\Sigma,\coarse}$ may now be characterised as the subspace of $\CBra^{g,l}_{\Sigma,\coarse}$ comprising those $\theta\colon C\to\Sigma$ whose transposition monodromy group and monodromy group both coincide with $\fS_l$; equivalently, those $\theta$ whose associated $\bar\theta$ as in Lemma \ref{lem:minimalfactorisation} has degree 1. 
\subsection{Coloured configuration spaces}
Recall that $\Theta^{g,l}_\Sigma$ is by definition modeled as a map of simplicial $\bQ$-vector spaces with target $\Sing_\bullet(\CBra^{g,l,\infty}_{\Sigma,\coarse},\infty;\bQ)$. We next model also $\dw_\bQ$ as a map of simplicial $\bQ$-vector spaces.
\begin{nota}
We denote by $\Sing'_\bullet(\dCBra^{g,l,\infty}_{\Sigma,\coarse},\infty;\bQ)$ the quotient of the simplicial $\bQ$-vector space $\Sing'_\bullet(\CBra^{g,l,\infty}_{\Sigma,\coarse},\infty;\bQ)$ by all simplices $t\colon\Delta^p\to  \CBra^{g,l,\infty}_{\Sigma,\coarse}$ such that $\alpha(t)\notin\dCBra^{g,l}_{\Sigma,\coarse}$.
\end{nota}
By
\cite[Corollary A.9.4]{HA},
we may model the map $\dw_\bQ$ from Theorem \ref{thm:Thetaisomorphism} by the quotient map of simplicial $\bQ$-vector spaces
\[
\dw_\bQ\colon \Sing'_\bullet(\CBra^{g,l,\infty}_{\Sigma,\coarse},\infty;\bQ)\to\Sing'_\bullet(\dCBra^{g,l,\infty}_{\Sigma,\coarse},\infty;\bQ).
\]
\begin{nota}
We define a filtration $F_\star$ on $\Sing'_\bullet(\dCBra^{g,l,\infty}_{\Sigma,\coarse},\infty;\bQ)$ as the image of the filtration $F_\star$ on $\Sing'_\bullet(\CBra^{g,l,\infty}_{\Sigma,\coarse},\infty;\bQ)$.
\end{nota}
We have thus modeled the composite from Theorem \ref{thm:Thetaisomorphism} as a composite
\[
\ER_\bullet(\Sigma,\cAh(l)_+)_{2\upsilon}\xrightarrow{\Theta^{g,l}_\Sigma}
\Sing'_\bullet(\CBra^{g,l,\infty}_{\Sigma,\coarse},\infty;\bQ)\xrightarrow{\dw_\bQ}\Sing'_\bullet(\dCBra^{g,l,\infty}_{\Sigma,\coarse},\infty;\bQ)
\]
of filtered maps of simplicial $\bQ$-vector spaces. Theorem \ref{thm:Thetaisomorphism} will follow from the following, which we state as a lemma for future reference.
\begin{lem}
\label{lem:first_reduction}
Assume that for all $1\le k\le 2\upsilon$ the map of simplicial $\bQ$-vector spaces $(F_k/F_{k-1})(\dw_\bQ\circ\Theta^{g,l}_\Sigma)$ is injective on homology groups in all degrees, and an isomorphism in degrees $*>2k-(2k-2\upsilon-J_l)/I_l$. Then Theorem \ref{thm:Thetaisomorphism} holds true.
\end{lem}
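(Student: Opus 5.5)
We compare the two filtered simplicial $\bQ$-vector spaces through the spectral sequences (equivalently, the finite towers of cofibre sequences) associated with the filtrations $F_\star$. Both filtrations are finite: on the source they run over $1\le k\le 2\upsilon$, since a basis element $(s,(\sca{\lambda_z}))$ has $\#\alpha(s)\le\sum_z N(\lambda_z)=2\upsilon$. On the target the filtration starts at $F_{-1}$, which has been quotiented out, and ends at $F_{2\upsilon}$. The composite $\dw_\bQ\circ\Theta^{g,l}_\Sigma$ is a filtered map, so it induces maps on all filtration quotients and on the long exact sequences
\[
\cdots\to H_*(F_{k-1})\to H_*(F_k)\to H_*(F_k/F_{k-1})\to H_{*-1}(F_{k-1})\to\cdots
\]
on both sides. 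We then argue by induction on $k$, using a five-lemma-type argument, that the map $F_k(\dw_\bQ\circ\Theta^{g,l}_\Sigma)$ is an isomorphism on $H_*$ in a suitable range. Taking $k=2\upsilon$ recovers the full map, since $F_{2\upsilon}$ is everything.

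The key bookkeeping is the range. The hypothesis gives an isomorphism on the $k$-th graded piece in degrees $*>2k-(2k-2\upsilon-J_l)/I_l$, and injectivity in all degrees. We claim by induction that $F_k$ is an isomorphism in degrees $*>c_k$ with $c_k\le 4\upsilon-(2\upsilon-J_l)/I_l$. To see this, rewrite the threshold as $2k(1-1/I_l)+(2\upsilon+J_l)/I_l$. Assuming $I_l\ge1$ (part of the requirements on the constants), this is increasing in $k$, and at $k=2\upsilon$ it equals $4\upsilon-(2\upsilon-J_l)/I_l$. So every graded piece is an isomorphism in degrees above the final threshold.

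For the inductive step we compare the two long exact sequences, with $F_{k-1}$ on the left, $F_k$ in the middle and $F_k/F_{k-1}$ on the right. The five lemma needs surjectivity in degree $*$ and injectivity in degree $*-1$ at the outer positions. The injectivity at $F_k/F_{k-1}$ in \emph{all} degrees is exactly what lets the range for $F_k$ equal the maximum of the ranges of $F_{k-1}$ and $F_k/F_{k-1}$, with no loss of one degree. In degree $*$ we need an isomorphism on $H_*(F_{k-1})$ and $H_*(F_k/F_{k-1})$, and injectivity on $H_{*-1}(F_{k-1})$ and $H_{*-1}(F_k/F_{k-1})$.

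The $H_{*-1}(F_k/F_{k-1})$ term is covered by injectivity in all degrees. The $H_{*-1}(F_{k-1})$ term requires controlling $F_{k-1}$ one degree lower. This is the main obstacle: a naive induction loses one degree per step, accumulating $2\upsilon$ degrees. I would avoid it by strengthening the inductive claim to "injective on $H_*(F_k)$ in all degrees and an isomorphism above $c_k$".

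Injectivity of $F_k$ in all degrees is itself obtained by a diagram chase. Suppose $x\in H_*(F_k)$ maps to zero. Its image in the quotient $F_k/F_{k-1}$ then maps to zero, so by injectivity there that image vanishes and $x$ lifts to $H_*(F_{k-1})$. However, the image of $x$ in $H_*(F_{k-1})$ on the target side only vanishes modulo the connecting map from $H_{*+1}(F_k/F_{k-1})$ of the target. Above the threshold this is handled by surjectivity on the quotient. Below it, full injectivity may genuinely fail, so I would fall back to proving the isomorphism statement only above the threshold. For that I would perform a downward check: in degrees above $c_k$, only terms in degrees $\ge *-1>c_k-1$ occur, and the monotonicity of the thresholds, together with the integrality of homological degrees, should leave room. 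If it turns out that one degree is lost, I would absorb it into the constant $J_l$, which Notation \ref{nota:goodconstantsIJ} allows to be enlarged.

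Finally, the simplicial models compute the rational spectra in question: the source by Lemma \ref{lem:kappaiso} and Remark \ref{rem:FSreduction}, and the target by \cite[Corollary A.9.4]{HA}. Hence the isomorphism of $H_*$ for the chain complexes translates into the statement on homotopy groups in Theorem \ref{thm:Thetaisomorphism}.
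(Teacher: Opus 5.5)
\textbf{There is a genuine gap: you do not close the induction, and that is where the content of this lemma lies.}

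Your strategy matches the paper's. Inducting along the finite filtration $F_\star$ with the long exact sequences is an unrolled form of the paper's comparison of the two filtration spectral sequences. There the map on $E^1$ is an isomorphism in total degrees above $N\coloneqq 4\upsilon-(2\upsilon-J_l)/I_l$ and injective everywhere. As you correctly note, $t_k\coloneqq 2k-(2k-2\upsilon-J_l)/I_l$ is weakly increasing in $k$ when $I_l\ge 1$, so $t_k\le N$ for all $k$.

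Two things go wrong.

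\emph{The five-lemma bookkeeping.} For the middle term $H_*(F_k)$, the outer terms are $H_{*+1}(F_k/F_{k-1})$, where you need surjectivity, and $H_{*-1}(F_{k-1})$, where you need injectivity. The term $H_{*-1}(F_k/F_{k-1})$ plays no role.

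\emph{The fallbacks.} You rightly identify injectivity on $H_{*-1}(F_{k-1})$ at the bottom of the range as the obstacle. Your strengthening to injectivity in all degrees fails, as you notice, and neither fallback works.
\begin{itemize}
\item ``Should leave room'' is not an argument.
\item Absorbing a loss into $J_l$ proves a different statement, since $J_l$ appears in both the hypothesis and the conclusion.
\item More seriously, the naive induction $c_k=\max(c_{k-1}+1,t_k)$ loses one degree per step. For $I_l=1$, which Notation \ref{nota:goodconstantsIJ} permits, all $t_k$ equal $2\upsilon+J_l$. The total loss is then $2\upsilon-1$ degrees, which grows with $\upsilon$ and cannot be absorbed into any constant.
\end{itemize}

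\textbf{The fix is the right strengthening.} Let $n_0=\lfloor N\rfloor$ be the one degree just below the range. Prove by induction on $k$ that $F_k(\dw_\bQ\circ\Theta^{g,l}_\Sigma)$ is an isomorphism on $H_*$ for $*>N$ and injective on $H_{n_0}$. The base case is trivial, since both sides vanish in filtration $\le 0$.
\begin{itemize}
\item \emph{Isomorphism for $*>N$.} This follows from the five lemma. The term $H_{*-1}(F_{k-1})$ has $*-1\ge n_0$, so it is either in the isomorphism range or is the injective degree. The term $H_{*+1}(F_k/F_{k-1})$ lies above $t_k$, so it is an isomorphism.
\item \emph{Injectivity in degree $n_0$.} This is exactly your diagram chase, i.e.\ the four lemma applied to $H_{n_0+1}(F_k/F_{k-1})\to H_{n_0}(F_{k-1})\to H_{n_0}(F_k)\to H_{n_0}(F_k/F_{k-1})$. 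It needs three inputs:
\begin{itemize}
\item surjectivity on $H_{n_0+1}(F_k/F_{k-1})$, which holds because $n_0+1>N\ge t_k$;
\item injectivity on $H_{n_0}(F_{k-1})$, by induction;
\item injectivity on $H_{n_0}(F_k/F_{k-1})$, by hypothesis.
\end{itemize}
\end{itemize}
Your chase already works in every degree $*$ with $*+1$ above the threshold, and that single extra degree is all the induction consumes. This invariant (``isomorphism in total degrees $>N$, injective in total degree $n_0$'') is also what propagates through the pages in the paper's spectral sequence argument.
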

\begin{proof}
The map of filtered simplicial $\bQ$-vector spaces $\dw_\bQ\circ\Theta^{g,l}_\Sigma$ gives rise to a comparison map between the spectral sequences computing the homology of the source and target of $\dw_\bQ\circ\Theta^{g,l}_\Sigma$. The hypothesis, together with the assumption $I_l\ge1$ from Notation \ref{nota:goodconstantsIJ}, implies that the map on $E^1$-pages is an isomorphism on $E^1_{k,*-k}$ for all $*>2k-(2k-2\upsilon-J_l)/I_l\ge4\upsilon-(2\upsilon-J_l)/I_l$ and an injection on $E^1_{k,*-k}$ for all $k,*$, whence a standard spectral sequence argument proves the claim.
\end{proof}
Lemma \ref{lem:first_reduction} is the first of a sequence of reductions of Theorem \ref{thm:Thetaisomorphism} that we will make; the subsequent reductions will be Lemmas \ref{lem:second_reduction} and \ref{lem:third_reduction}, and Example \ref{ex:tbrvexplained}.

We next analyse the maps $(F_k/F_{k-1})(\dw_\bQ\circ\Theta^{g,l}_\Sigma)$ in terms of coloured configuration spaces, using the following definition.

\begin{defn}
\label{defn:colconf}
Recall Definition \ref{defn:Ran}. Let $X$ be a space and let $S$ be a finite set.
We denote by $\Conf_k(X,S)$ the space of configurations of $k$ points in $X$ with labels in $S$. It is defined as the quotient
\[
\Conf_k(X,S)\coloneqq\pa{\set{(z_1,\dots,z_k)\in X^k\,|\,z_i\neq z_j}\times S^k}/\fS_k.
\]
It is a finite \'etale cover of $\Conf_k(X)$.
If $S$ carries a grading $S\to\bN$, we denote by $\Conf_k(X,S)_n$ the subspace of configurations whose sum of labels is equal to $n$.
If $\uk=(k_s)_{s\in S}$ is a sequence of natural numbers with $k=\sum_{s\in S}k_s$,
we let $\Conf_{\uk}(X)$ denote the $\uk$-coloured configuration space of $X$, i.e. the subspace of $\Conf_k(X,S)$ containing configurations in which precisely $k_s$ points carry the label $s\in S$.
\end{defn}
\begin{nota}
Recall Notation \ref{nota:fP}. We consider on $\fP(l)_+$ the grading $N\colon\fP(l)_+\to\bN$ as in Definition \ref{defn:norm}
\end{nota}
The simplicial $\bQ$-vector space $(F_k/F_{k-1})\ER_\bullet(\Sigma,\cAh(l)_+)_{2\upsilon}$ may be identified with the simplicial $\bQ$-vector subspace of $\Sing_\bullet(\Conf_k(\Sigma,\fP(l)_+)_{2\upsilon}^\infty,\infty;\bQ)$ spanned by those simplices $s\colon \Delta^p\to\Conf_k(\Sigma,\fP(l)_+)_{2\upsilon}^\infty$ such that $s^{-1}(\infty)$ is empty or it is a ``final subsimplex'' of $\Delta^p$, i.e. it has the form $\delta_0^i(\Delta^{p-i})$ for some $1\le i\le p-1$. By 
\cite[Corollary A.9.4]{HA}, the inclusion
\begin{equation}
\label{eq:factohom_colconf_iso}
(F_k/F_{k-1})\ER_\bullet(\Sigma,\cAh(l)_+)_{2\upsilon}\hookrightarrow\Sing_\bullet(\Conf_k(\Sigma,\fP(l)_+)_{2\upsilon}^\infty,\infty;\bQ)
\end{equation}
is a quasi-isomorphism.
\begin{nota}
\label{nota:Xi}
We denote by $\Xi_k$ the set of all sequences $\uk=(k_\lambda)_{\lambda\in\fP(l)_+}$ of natural numbers with $\sum_{\lambda\in\fP(l)_+}k_\lambda=k$ and $\sum_{\lambda\in\fP(l)_+}k_\lambda\cdot N(\lambda)=2\upsilon$. The set $\Xi_k$ is in natural bijection with $\pi_0(\Conf_k(\Sigma,\fP(l)_+)_{2\upsilon})$.
\end{nota}

We split $(F_k/F_{k-1})\ER_\bullet(\Sigma,\cAh(l)_+)_{2\upsilon}$ as a direct sum of simplicial $\bQ$-vector spaces $(F_k/F_{k-1})\ER_\bullet(\Sigma,\cAh(l)_+)_{\uk}$ for varying $\uk\in\Xi_k$
: the simplicial $\bQ$-vector subspace corresponding to $\uk$ is spanned by basis elements $(s,\sca{\lambda_z}_{z\in\alpha(s)})$ with $\#\alpha(s)=k$ and $\#\set{z\in\alpha(s)\,|\,\lambda_z=\lambda}=k_\lambda$ for all $\lambda\in\fP(l)_+$.
The quasi-isomorphism 
\eqref{eq:factohom_colconf_iso} restricts to quasi-isomorphisms
\[
(F_k/F_{k-1})\ER_\bullet(\Sigma,\cAh(l)_+)_{\uk}\overset{\simeq}{\hookrightarrow}\Sing_\bullet(\Conf_{\uk}(\Sigma)^\infty,\infty;\bQ).
\]

Now recall Definition \ref{defn:brv}: the finite \'etale map 
\[
\brv\colon (F_k\setminus F_{k-1})\CBra^{g,l}_{\Sigma,\coarse}\to\Conf_k(\Sigma)
\]
naturally lifts to a finite \'etale map
\[
\tbrv\colon (F_k\setminus F_{k-1})\CBra^{g,l}_{\Sigma,\coarse}\to\Conf_k(\Sigma,\fP(l)_+)_{2\upsilon},
\]
by recording not only the branch values, but also the conjugacy class of their local monodromies. Our second reduction towards Theorem \ref{thm:Thetaisomorphism} is the following.
\begin{lem}
\label{lem:second_reduction}
Assume that for all $1\le k\le2\upsilon$ the restricted \'etale map
\begin{equation}
\label{eq:tbrvrestricted}
\tbrv\colon(F_k\setminus F_{k-1})\dCBra^{g,l}_{\Sigma,\coarse}\to\Conf_k(\Sigma,\fP(l)_+)_{2\upsilon}
\end{equation}
induces an isomorphism in rational cohomology in the range of cohomological degrees $*<(2k-2\upsilon-J_l)/I_l$. Then the assumptions of Lemma \ref{lem:first_reduction} hold true.
\end{lem}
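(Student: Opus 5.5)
Fix $1\le k\le 2\upsilon$ and split the source of $(F_k/F_{k-1})(\dw_\bQ\circ\Theta^{g,l}_\Sigma)$ into the summands indexed by $\uk\in\Xi_k$, each quasi-isomorphic to $\Sing_\bullet(\Conf_{\uk}(\Sigma)^\infty,\infty;\bQ)$. Likewise split the target $(F_k/F_{k-1})\Sing'_\bullet(\dCBra^{g,l,\infty}_{\Sigma,\coarse},\infty;\bQ)$, via \cite[Corollary A.9.4]{HA}, as the relative chains of the one-point compactification of the locally closed stratum $(F_k\setminus F_{k-1})\dCBra^{g,l}_{\Sigma,\coarse}$. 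This stratum decomposes as a disjoint union of the preimages $\tbrv^{-1}(\Conf_{\uk}(\Sigma))$, each open and closed in the stratum.

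The first step is to identify the filtration quotient of $\dw_\bQ\circ\Theta^{g,l}_\Sigma$ on each $\uk$-summand with a transfer. By Definition \ref{defn:gamma}, a basis simplex $(s,\sca{\lambda_z})$ is sent to the weighted sum $\sum 1/\#\Aut(\alpha(t))\cdot t$ of all lifts $t$ of $s$ along $\tbrv$. The lifts with $\alpha(t)\notin\dCBra$ are killed by $\dw_\bQ$, and the face maps are compatible by the check done in the proof of Proposition \ref{prop:defnTheta}. On the stratum $\tbrv$ is a finite étale (orbifold) covering, so this weighted sum is exactly the Becker--Gottlieb transfer (umkehr map) on compactly supported rational chains. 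It is taken along the restricted map \eqref{eq:tbrvrestricted}, rather than along the whole stratum of $\CBra$, because non-$\dCBra$ lifts are discarded.

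The second step is to apply Poincaré--Lefschetz duality on both sides. $\Conf_{\uk}(\Sigma)$ is an oriented manifold of real dimension $2k$, and the stratum of $\dCBra$ is an oriented rational homology manifold of the same dimension, being étale over it. Duality turns relative homology of one-point compactifications in degree $*$ into cohomology in degree $2k-*$. For a finite étale map, the transfer on compactly supported homology is dual to the pullback $\tbrv^*$ on ordinary cohomology. The degree condition $*>2k-(2k-2\upsilon-J_l)/I_l$ thus becomes the cohomological range $*<(2k-2\upsilon-J_l)/I_l$, which is exactly what the hypothesis provides.

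Injectivity in all degrees follows from rational transfer theory. For a finite (orbifold) covering $E\to B$ with rational coefficients, the composite of pullback followed by pushforward is multiplication by the weighted degree. Hence $\tbrv^*$ is injective on each component of $\Conf_{\uk}(\Sigma)$ over which the restricted stratum is nonempty.

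The main obstacle is the components $\uk$ over which the $\dCBra$-stratum is empty. There $\tbrv^*$ has target zero and is not injective, unless $H^*(\Conf_{\uk}(\Sigma))$ vanishes, which it does not. I would handle this by checking that for every $\uk$ the preimage in $\dCBra$ is nonempty as soon as some $k_\lambda$ with $\lambda$ a transposition class is positive. If needed, I would restrict attention to such $\uk$ and observe that the hypothesis of the lemma, namely an isomorphism in the range including degree $0$ whenever $(2k-2\upsilon-J_l)/I_l>0$, already forces nonemptiness where relevant. I expect this bookkeeping, together with the careful identification of the weights $1/\#\Aut$ with the orbifold transfer, to be the delicate points.
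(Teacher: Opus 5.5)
Your main line is the same as the paper's. You split by $\uk\in\Xi_k$, recognise $(F_k/F_{k-1})(\dw_\bQ\circ\Theta^{g,l}_\Sigma)$ as the $1/\#\Aut$-weighted transfer along $\tbrv$ on chains relative to $\infty$, and use Poincar\'e--Lefschetz duality to turn it into the weighted pullback along \eqref{eq:tbrvrestricted}. The isomorphism range is then read off from the hypothesis, and that part is fine. You have also found a real weak point, which the paper's own proof skips by simply asserting that the finite \'etale map \eqref{eq:tbrvrestricted} is injective on rational cohomology.

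\textbf{The gap.} Your remedy fails, and the ``injective in all degrees'' conclusion cannot be proved at all. For $l\ge3$, take $k=\upsilon$ and $\uk$ with all $\upsilon$ points labelled by the class of $3$-cycles. Then:
\begin{itemize}
\item $k_\trsp=0$, so the preimage in $\dCBra^{g,l}_{\Sigma,\coarse}$ is empty by definition;
\item the hypothesis for this $k$ is vacuous, since $(2k-2\upsilon-J_l)/I_l<0$;
\item the map nevertheless kills $H_{2\upsilon}(\Conf_{\uk}(\Sigma)^\infty,\infty;\bQ)\cong H^0(\Conf_{\uk}(\Sigma);\bQ)\cong\bQ$.
\end{itemize}
So injectivity genuinely fails for this $k$. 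You cannot ``restrict attention'' to good $\uk$, because Lemma \ref{lem:first_reduction} is about the whole filtration quotient.

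Your auxiliary claim is also false in general. Take $\Sigma=S^2$, $l=4$, $g=0$ (so $\upsilon=3$), with two transpositions and two double transpositions whose product is $\one$. Modulo the Klein four-group, the two transpositions must have equal image in $\fS_3$. Hence the generated group lies in a dihedral group of order $8$ preserving a partition into two pairs. The cover therefore factors, and the $\dCBra$-preimage is empty even though $k_\trsp=2$.

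\textbf{The fix.} The repair has to happen upstream, in Lemma \ref{lem:first_reduction}. Its spectral-sequence comparison only needs the $E^1$-map to be an isomorphism in total degrees $>T\coloneqq 4\upsilon-(2\upsilon-J_l)/I_l$ and injective in total degree $\lfloor T\rfloor$. With this weaker requirement your argument closes:
\begin{itemize}
\item If $2k>2\upsilon+J_l$, the hypothesis includes degree $0$. So the preimage meets every component of $\Conf_k(\Sigma,\fP(l)_+)_{2\upsilon}$, and your transfer argument gives injectivity.
\item If $2k\le 2\upsilon+J_l$ (this covers every $\uk$ with $k_\trsp=0$), the $k$-th source piece vanishes above degree $2k$. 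Since $I_l\ge1$, we have $2k\le T$, so this piece can only matter in the boundary case $2k=\lfloor T\rfloor$.
\item In that boundary case nonemptiness is checked directly: either $k_\trsp\ge 2l-2$ and Corollary \ref{cor:CHurconnected} applies, or $k=2\upsilon$ and all labels are transpositions.
\end{itemize}
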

\begin{proof}
We may identify $(F_k/F_{k-1})\Sing'_\bullet(\CBra^{g,l,\infty}_{\Sigma,\coarse},\infty;\bQ)$ with the simplicial $\bQ$-vector subspace of $\Sing_\bullet\pa{\pa{(F_k\setminus F_{k-1})\CBra^{g,l}_{\Sigma,\coarse}}^\infty,\infty;\bQ}$ spanned by simplices $s\colon\Delta^p\to\pa{(F_k\setminus F_{k-1})\CBra^{g,l}_{\Sigma,\coarse}}^\infty$ such that $s^{-1}(\infty)$ is a terminal subsimplex of $\Delta^p$. A similar remark holds for $\pa{(F_k\setminus F_{k-1})\dCBra^{g,l}_{\Sigma,\coarse}}^\infty$. By
\cite[Corollary A.9.4]{HA}, the map $(F_k/F_{k-1})\dw_\bQ$ models the collapse map
\[
\pa{\pa{(F_k\setminus F_{k-1})\CBra^{g,l}_{\Sigma,\coarse}}^\infty,\infty}_!\bQ\to\pa{\pa{(F_k\setminus F_{k-1})\dCBra^{g,l}_{\Sigma,\coarse}}^\infty,\infty}_!\bQ
\]
which under Poincar\'e--Lefschetz duality corresponds to the shifted restriction map along the open inclusion
\[
\pa{(F_k\setminus F_{k-1})\CBra^{g,l}_{\Sigma,\coarse}}_*\bQ[2k]\to\pa{(F_k\setminus F_{k-1})\dCBra^{g,l}_{\Sigma,\coarse}}_*\bQ[2k].
\]

Similarly, $(F_k/F_{k-1})\Theta^{g,l}_\Sigma$ models a ``weighted'' version of the shifted restriction map along the finite \'etale cover $\tbrv$
\[
\pa{\Conf_k(\Sigma,\fP(l)_+)_{2\upsilon}}_*\bQ\to\pa{(F_k\setminus F_{k-1})\CBra^{g,l}_{\Sigma,\coarse}}_*\bQ,
\]
described precisely as follows. Let $Z\subseteq (F_k\setminus F_{k-1})\CBra^{g,l}_{\Sigma,\coarse}$ be a connected component, and let $\Conf_{\uk}(\Sigma)=\tbrv(Z)$. Observe that if $\theta\colon C\to\Sigma$ and $\theta'\colon C'\to\Sigma$ represent classes in $Z$, then there is a homeomorphism $C\cong C'$ covering a homeomorphism $\Sigma\cong\Sigma$; in particular the number $n=\#\Aut(\theta)\ge1$ is also equal to $\#\Aut(\theta')$, i.e. it only depends on $Z$. Then $\Theta^{g,l}_\Sigma$ restricts to a simplicial map
\[
\ER_\bullet(\Sigma,\cAh(l)_+)_{\uk}\to\Sing'_\bullet(Z^\infty,\infty;\bQ)\subseteq\Sing'_\bullet(\CBra^{g,l,\infty}_{\Sigma,\coarse},\infty;\bQ)
\]
which models a map of rational spectra
\[
\pa{\Conf_{\uk}(\Sigma)^\infty,\infty}_!\bQ\to (Z^\infty,\infty)_!\bQ,
\]
and under Poincar\'e--Lefschetz duality corresponds to the map
\[
\Conf_{\uk}(\Sigma)_*\bQ[2k]\to Z_*\bQ[2k]
\]
given by $1/n$ times restriction along $\tbrv$; note that the number $n$ depends on $Z$.

It follows that $(F_k/F_{k-1})(\dw_\bQ\circ\Theta^{g,l}_\Sigma)$ may be identified with a mild variation of the restriction map associated with \eqref{eq:tbrvrestricted}, in which we multiply by different invertible rational numbers on different components of $(F_k\setminus F_{k-1})\dCBra^{g,l}_{\Sigma,\coarse}$. Being a finite \'etale cover, \eqref{eq:tbrvrestricted} induces injections in rational cohomology groups. Assuming \eqref{eq:tbrvrestricted} induces an isomorphism in the range from the statement, Poincar\'e--Lefschetz duality yields the corresponding isomorphism range for $(F_k/F_{k-1})(\dw_\bQ\circ\Theta^{g,l}_\Sigma)$ in homology.
\end{proof}
There is one further reduction towards Theorem \ref{thm:Thetaisomorphism} that we wish to make now.
\begin{nota}
For finite subsets $A\subseteq B\subseteq\Sigma$ as in Definition \ref{defn:Bra} and for $k\ge0$ we denote by $F_k\dCBra^{g,l}_{\Sigma\rel A\et B,\coarse}$ the preimage of $F_k\dCBra^{g,l}_{\Sigma,\coarse}$ along the map $\dCBra^{g,l}_{\Sigma\rel A\et B,\coarse}\to\dCBra^{g,l}_{\Sigma,\coarse}$ forgetting trivialisations over $A$ and the \'etale property over $B$.
For simplicity, we denote by
$\tbrv$ also the composite map
\[
(F_k\setminus F_{k-1})\CBra^{g,l}_{\Sigma\rel A\et B,\coarse}\to(F_k\setminus F_{k-1})\CBra^{g,l}_{\Sigma\et B,\coarse}\xrightarrow{\tbrv}\Conf_k(\Sigma\setminus B,\fP(l)_+)_{2\upsilon}.
\]
\end{nota}
\begin{lem}
\label{lem:third_reduction}
Assume that the restricted finite \'etale map
\begin{equation}
\label{eq:tbrv_generalised_dBra}
\tbrv\colon(F_k\setminus F_{k-1}) \dCBra^{g,l}_{\Sigma\rel a\et B,\coarse}\to\Conf_k(\Sigma\setminus B,\fP(l)_+)_{2\upsilon}
\end{equation}
induces an isomorphism in rational cohomology in the range of cohomological degrees $*<(2k-2\upsilon-J_l)/I_l$ for any non-empty finite subset $B\subseteq\Sigma$ and any $a\in\Sigma$. Then the assumptions of Lemma \ref{lem:second_reduction} hold true.
\end{lem}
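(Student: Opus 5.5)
We deduce the hypothesis of Lemma \ref{lem:second_reduction} (no basepoint, $B=\emptyset$) from the based version \eqref{eq:tbrv_generalised_dBra} by two moves. First we remove the trivialisation at $a$. Then we glue over the open cover of $\dCBra^{g,l}_{\Sigma,\coarse}$ given by the loci where a chosen finite set of points is unbranched.

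\textbf{Removing the basepoint.} Fix a non-empty finite $B\subseteq\Sigma$ and $a\in B$. As in the proof of Proposition \ref{prop:orbifold}, the group $\fS_l$ acts on $(F_k\setminus F_{k-1})\dCBra^{g,l}_{\Sigma\rel a\et B}$ by postcomposing the trivialisation $\bt$. The quotient is $(F_k\setminus F_{k-1})\dCBra^{g,l}_{\Sigma\et B}$, whose coarse space is $(F_k\setminus F_{k-1})\dCBra^{g,l}_{\Sigma\et B,\coarse}$. The map $\tbrv$ is $\fS_l$-invariant. Since $\fS_l$ is finite, rational cohomology of the quotient is the invariant part of the cohomology of the cover. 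The map induced by $\tbrv$ on rational cohomology is an isomorphism in the range by hypothesis, it is $\fS_l$-equivariant, and its source $\Conf_k(\Sigma\setminus B,\fP(l)_+)_{2\upsilon}$ carries the trivial action. Hence the image is entirely invariant, and taking invariants shows that
\[
\tbrv\colon(F_k\setminus F_{k-1})\dCBra^{g,l}_{\Sigma\et B,\coarse}\to\Conf_k(\Sigma\setminus B,\fP(l)_+)_{2\upsilon}
\]
is again a rational cohomology isomorphism for $*<(2k-2\upsilon-J_l)/I_l$.

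\textbf{Gluing.} For each non-empty finite $B\subseteq\Sigma$, the open subspaces $V_B\coloneqq(F_k\setminus F_{k-1})\dCBra^{g,l}_{\Sigma\et B,\coarse}$ cover $(F_k\setminus F_{k-1})\dCBra^{g,l}_{\Sigma,\coarse}$. The corresponding opens $W_B=\Conf_k(\Sigma\setminus B,\fP(l)_+)_{2\upsilon}$ cover $\Conf_k(\Sigma,\fP(l)_+)_{2\upsilon}$. The two covers are compatible: $V_B=\tbrv^{-1}(W_B)$, and $V_B\cap V_{B'}=V_{B\cup B'}$, $W_B\cap W_{B'}=W_{B\cup B'}$. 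We use the hypercover (Čech nerve) indexed by the poset of non-empty finite subsets, or a cofinal countable subfamily; the finite intersections are again of the same form. We then compare the Mayer--Vietoris/Čech descent spectral sequences of source and target. On each term the map is an isomorphism in degrees $*<N$, where $N=(2k-2\upsilon-J_l)/I_l$, and this bound does not depend on $B$. Taking the homotopy limit of cochains over the Čech diagram therefore yields an isomorphism in degrees $*<N$, provided the spectral sequence converges.

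\textbf{Main obstacle.} The delicate point is this convergence and connectivity bookkeeping. The Čech diagram is infinite, and higher Čech terms could a priori shift degrees. To control it, we would use that the map of cochain spectra has fibre which is $N$-coconnective termwise, and that coconnectivity is preserved under limits. So we never need to truncate the Čech complex: the limit of maps whose fibres lie in degrees $\ge N$ still has fibre in degrees $\ge N$. A second, minor point is to justify that cohomology of an open cover of the coarse spaces is computed by this descent. This holds because these are paracompact Hausdorff spaces and we are using sheaf (equivalently singular) cohomology. Finally, injectivity in all degrees, as needed by Lemma \ref{lem:first_reduction}, is unaffected: Lemma \ref{lem:second_reduction} already obtains it from the finite étale property of \eqref{eq:tbrvrestricted}.
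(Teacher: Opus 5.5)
Your two moves are the paper's two moves, done in the opposite order. The paper glues first and then removes the trivialisation on each piece. It argues that both $(F_k\setminus F_{k-1})\dCBra^{g,l}_{\Sigma\rel a\et B}\to(F_k\setminus F_{k-1})\dCBra^{g,l}_{\Sigma\et B,\coarse}$ and $\tbrv$ are injective on rational cohomology, so an isomorphism for the composite forces one for $\tbrv$. Your $\fS_l$-invariants argument is an equivalent way of doing this step. The genuine difference is the cover. The paper fixes a set $\hat B\subseteq\Sigma$ of $2\upsilon+1$ points and uses only the opens indexed by non-empty $B\subseteq\hat B$. These already cover, because a configuration of $k\le 2\upsilon$ branch values must miss some point of $\hat B$. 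The Mayer--Vietoris spectral sequence then has finitely many columns, and the convergence problem you single out as the main obstacle never arises. Your infinite cover also works: singular cochains satisfy descent for arbitrary open covers, and coconnective spectra are closed under limits. It is just heavier than necessary.

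There is, however, an off-by-one slip in the bookkeeping you propose. Suppose you know termwise only that $H^*(W_B;\bQ)\to H^*(V_B;\bQ)$ is an isomorphism for $*<N$. Then the termwise fibres have cohomology in degrees $\ge N$; their first possibly non-zero group is the kernel in the first degree $\ge N$. A map whose fibre lies in degrees $\ge N$ is only an isomorphism in degrees $*<N-1$. In the long exact sequence $H^q(W)\to H^q(V)\to H^{q+1}(\mathrm{fib})$, a cokernel can appear in the top degree of the range. So your argument as stated loses one degree.

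The fix is to feed termwise injectivity in all degrees into the gluing step, so that the termwise fibres lie strictly above the range. This injectivity holds. If the range is non-empty, the degree-$0$ part of the isomorphism shows that $V_B\to W_B$ meets every component of $W_B$. The transfer for this finite covering then gives injectivity in rational cohomology. The paper's ``standard Mayer--Vietoris spectral sequence argument'' needs exactly this input at the edge column $p=0$, which is why it records injectivity of finite \'etale maps. You invoke that fact only afterwards, for Lemma \ref{lem:first_reduction}, whereas it is needed inside the gluing itself.
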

\begin{proof}
Let $\hat B\subseteq \Sigma$ be a subset of cardinality $2\upsilon+1$. For any non-empty subset $B\subseteq \hat B$ we may restrict \eqref{eq:tbrvrestricted} to a finite \'etale map
\begin{equation}
\label{eq:tbrvrestricted_etale_B_nonempty}
\tbrv\colon(F_k\setminus F_{k-1})\dCBra^{g,l}_{\Sigma\et B,\coarse}\to\Conf_k(\Sigma\setminus B,\fP(l)_+)_{2\upsilon}
\end{equation}
The sources and targets of \eqref{eq:tbrvrestricted_etale_B_nonempty} for varying $\emptyset\neq B\subseteq \hat B$ form a hypercover of the source and target of \eqref{eq:tbrvrestricted}, so that by a standard Mayer--Vietoris spectral sequence argument it suffices to show that \eqref{eq:tbrvrestricted_etale_B_nonempty} induces isomorphisms in rational cohomology in degrees $*<(2k-2\upsilon-J_l)/I_l$. For this, we may pick $a\in B$ and consider the composite 
\[
(F_k\setminus F_{k-1})\dCBra^{g,l}_{\Sigma\rel a\et B}\to (F_k\setminus F_{k-1})\dCBra^{g,l}_{\Sigma\et B,\coarse}\xrightarrow{\eqref{eq:tbrvrestricted_etale_B_nonempty}}\Conf_k(\Sigma\setminus B,\fP(l)_+)_{2\upsilon};
\]
by assumption the previous composite induces a rational cohomology isomorphism in degrees $*<(2k-2\upsilon-J_l)/I_l$, but we also know that both maps, being finite \'etale, induce injective morphisms in rational cohomology in all degrees.
\end{proof}

\subsection{Connectivity of Hurwitz spaces and modules}
We next make a digression about Hurwitz spaces in the sense of \cite{LL2}. Although we may restrict our attention to symmetric groups, the arguments we present work in the more general adpoted setting. Our goal in this subsection is to verify that \cite[Theorem1.4.9]{LL2}
may be applied to prove that the assumptions of Lemma \ref{lem:third_reduction} hold true.

In the entire subsection we fix a finite group $G$, and let $c=c_1\sqcup\dots\sqcup c_v$ be a union of $v$ conjugacy classes in $G$, with the requirement that $c_1$ generates $G$. We also fix a compact, connected, oriented topological surface $\cF$ of genus $h\ge0$ with $n+1\ge 1$ boundary curves denoted $\partial_i\cF$ for $0\le i\le n$, and a basepoint $*\in\partial_0\cF$.
\begin{ex}
\label{ex:mainsetting}
Our applications will be with $G=\fS_l$, $c\subseteq \fS_l\setminus\set{\one}$ being a conjugation-invariant subset containing all transpositions, and $c_1$ being the conjugacy class of transpositions.
\end{ex}

\begin{nota}
\label{nota:standardgenerators}
We fix a sequence $\bar P_\infty=\set{\bar z_i\,|\,i\in\bN}$ of distinct points in $\mcF$ that converges to $*\in\partial_0\cF$, and let $\bar P_k\coloneqq\set{\bar z_1,\dots,\bar z_k}$ be our preferred basepoint of $\Conf_k(\mcF)$. For $k=0$ we also set $\bar P_0=\emptyset$ for uniform notation.

Further, we fix for all $k\ge0$ a system of simple loops, all disjoint from $\bar P_\infty$
\[
\alpha_1,\beta_1,\dots,\alpha_h,\beta_h,\delta_0,\dots,\delta_n,\gamma_1,\gamma_2,\gamma_3,\dots\in\pi_1(\FmbP_\infty,*)
\]
as in Figure \ref{fig:loopscF},
so that in particular, for all $k\ge1$, $\pi_1(\FmbP_k,*)$ is freely generated by
the loops $\alpha_1,\beta_1,\dots,\alpha_h,\beta_h,\delta_0,\dots,\delta_n,\gamma_1,\dots,\gamma_k$.
\begin{figure}
\centering
\begin{tikzpicture}
\draw[fill=gray!20] (0,0) rectangle (12,3)node[above left]{$\partial_0\cF$};
\draw[line width=3pt](0,0) -- (12,0);
\node[below left] at (10,0){$*$};
\fill[gray!40] (1.5,1.5) ellipse (1.2cm  and 0.7cm);
\node[above] at (1.1,2){$V_2$};
\draw[dashed, bend left](0.6,0) to node[left]{$\zeta_2$} (0.6,1.2);
\node at (1,1.5){$\bullet\ \bar z_1$};
\node at (2.1,1.3){$\bullet\ \bar z_2$};
\node at (3.2,1.1){$\bullet\ \bar z_3$};
\draw[looseness=1.5, midarrow] (1,0) to[in=-90, out=90](0.6,1.5) to[out=90, in=90] (1.4,1.5)  to[out=-90, in=90] node[above left]{$\gamma_1$} (1,0);
\draw[looseness=1.5, midarrow] (2.1,0) to[in=-90, out=90](1.7,1.3) to[out=90, in=90] (2.5,1.3)  to[out=-90, in=90] node[above left]{$\gamma_2$} (2.1,0);
\draw[looseness=1.5, midarrow] (3.2,0) to[in=-90, out=90](2.8,1.1) to[out=90, in=90] (3.6,1.1)  to[out=-90, in=90] node[above left]{$\gamma_3$} (3.2,0);
\draw[looseness=2] (3.5,3) to[out=90, in=90](5.5,3);
\draw[looseness=2] (3.9,3) to[out=90, in=90](5.1,3);
\fill[looseness=2, gray!20](3.5,2.7)to(3.5,3)to[out=90, in=90](5.5,3)to (5.5,2.7) to (5.1,2.7)to (5.1,3) to[out=90, in=90] (3.9,3) to (3.9,2.7) to(3.5,2.7);
\draw[looseness=2] (3.5,3) to[out=90, in=90](5.5,3);
\draw[looseness=2] (3.9,3) to[out=90, in=90](5.1,3);
\draw[looseness=2,midarrow] (4.5,0) to[in=-90, out=100] (3.7,3)node[below left]{$\alpha_1$} to[out=90, in=90] (5.3,3)  to[out=-90, in=80]  (4.5,0);

\begin{scope}[xshift=0.8cm]
\fill[looseness=2, gray!20](3.5,2.7)to(3.5,3)to[out=90, in=90](5.5,3)to (5.5,2.7) to (5.1,2.7)to (5.1,3) to[out=90, in=90] (3.9,3) to (3.9,2.7) to(3.5,2.7);
\draw[looseness=2] (3.5,3) to[out=90, in=90](5.5,3);
\draw[looseness=2] (3.9,3) to[out=90, in=90](5.1,3);
\end{scope}
\draw[looseness=2,midarrow] (4.5,0) to[in=-90, out=70](6.1,3) node[below left]{$\beta_1$}  to[out=90, in=90] (4.5,3) to[out=-90, in=90] (4.5,0);
\begin{scope}[xshift=3.3cm]
\fill[looseness=2, gray!20](3.5,2.7)to(3.5,3)to[out=90, in=90](5.5,3)to (5.5,2.7) to (5.1,2.7)to (5.1,3) to[out=90, in=90] (3.9,3) to (3.9,2.7) to(3.5,2.7);
\draw[looseness=2] (3.5,3) to[out=90, in=90](5.5,3);
\draw[looseness=2] (3.9,3) to[out=90, in=90](5.1,3);
\draw[looseness=2,midarrow] (4.5,0) to[in=-90, out=100] (3.7,3)node[below left]{$\alpha_2$} to[out=90, in=90] (5.3,3)  to[out=-90, in=80]  (4.5,0);
\begin{scope}[xshift=0.8cm]
\fill[looseness=2, gray!20](3.5,2.7)to(3.5,3)to[out=90, in=90](5.5,3)to (5.5,2.7) to (5.1,2.7)to (5.1,3) to[out=90, in=90] (3.9,3) to (3.9,2.7) to(3.5,2.7);
\draw[looseness=2] (3.5,3) to[out=90, in=90](5.5,3);
\draw[looseness=2] (3.9,3) to[out=90, in=90](5.1,3);
\end{scope}
\draw[looseness=2,midarrow] (4.5,0) to[in=-90, out=70](6.1,3) node[below left]{$\beta_2$}  to[out=90, in=90] (4.5,3) to[out=-90, in=90] (4.5,0);
\end{scope}
\draw[fill=white] (10,1.5) node{$\partial_1\cF$} circle (0.4cm);
\draw[fill=white] (11.3,1.5) node{$\partial_2\cF$} circle (0.4cm);
\draw[looseness=1.5, midarrow] (10,0) to[in=-90, out=90](9.4,1.5) to[out=90, in=90] (10.6,1.5)  to[out=-90, in=90] node[ left]{$\delta_1$} (10,0);
\draw[looseness=1.5, midarrow] (11.3,0) to[in=-90, out=90](10.7,1.5) to[out=90, in=90] (11.9,1.5)  to[out=-90, in=90] node[ left]{$\delta_2$} (11.3,0);
\end{tikzpicture}
\caption{The generators of $\pi_1(\FmbP_k,*)$ in the case $k=3$, $n+1=3$ and $h=2$. The entire bottom horizontal edge represents the basepoint $*$.}
\label{fig:loopscF}
\end{figure}
The loop
$\delta_0$ is the \emph{counterclockwise} boundary loop around $\partial_0\cF$, and satisfies the equality
\begin{equation}
\label{eq:delta0formula}    
\delta_0=\gamma_1\dots\gamma_k[\alpha_1,\beta_1]\dots[\alpha_h,\beta_h]\delta_1\dots\delta_n\in\pi_1(\FmbP_k,*),
\end{equation}
where $[\alpha_i,\beta_i]=\alpha_i\beta_i\alpha_i^{-1}\beta_i^{-1}$. We sometimes consider $\pi_1(\cF,*)$ as the subgroup of $\pi_1(\FmbP_k,*)$ generated by the elements $\alpha_1,\beta_1,\dots,\alpha_h,\beta_h,\delta_0,\dots,\delta_n$.

Finally, for each $k\ge1$ we fix a disc $V_k$ whose closure $\bar V_k$ is contained in $\mcF$, and such that $\bar P_\infty\cap V_k=\bar P_\infty\cap\bar V_k=\bar P_k$. We also fix a path $\zeta_k$ joining $*$ with $\partial V_k$ and otherwise disjoint from $V_k\cup\bar P_\infty$. We assume that the concatenation $\zeta_k\star\partial V_k\star\zeta_k^{-1}$ is homotopic, as a based loop in $\FmbP_\infty$, to the product $\gamma_1\dots\gamma_k$.\end{nota}
\begin{nota}
\label{nota:braidgroup}
We denote by $B_k^\cF=\pi_1\pa{\Conf_k(\mcF),\bar P_k}$ the $k$\textsuperscript{th} braid group of $\cF$. It is endowed with a point-pushing map $B_k^\cF\hookrightarrow\pi_0(\Homeo^+(\FmbP_k,\partial\cF))$. For $1\le m\le k$ we identify the standard braid group $B_m$ with the subgroup of the image of the point-pushing map spanned by homotopy classes of homeomorphisms supported on $V_m\setminus \bar P_m$, i.e. restricting to the identity outside $V_m\setminus\bar P_m$.
\end{nota}
\begin{defn}
\label{defn:CHur}
We let $\Hom^c(\pi_1(\FmbP_k,*),G)\subseteq \Hom(\pi_1(\FmbP_k,*),G)$ denote the subset of those group homomorphisms $\phi$ whose evaluation at each loop $\gamma_1,\dots,\gamma_k$ belongs to $c$.
We let $\Hur^{G,c}_k(\cF)$ denote the finite \'etale cover of $\Conf_k(\mcF)$ corresponding to the action of $B_k^\cF$ on $\Hom^c(\pi_1(\FmbP_k,*),G)$ induced by the point-pushing map. Concretely, $\Hur^{G,c}_k(\cF)$ contains pairs $(P,\phi)$ consisting of a configuration $P\in\Conf_k(\mcF)$ and a group homomorphism $\phi\colon\pi_1(\cF\setminus P,*)\to G$ whose evaluation at any small loop spinning clockwise around a point of $P$ belongs to $c$.

We let $\CHom^c(\pi_1(\FmbP_k,*),G)$ denote the subset of surjective group homomorphism; it is $B^\cF_k$-invariant, and we let $\dCHur_k^{G,c}(\cF)\subseteq\Hur^{G,c}_k(\cF)$ denote the corresponding subspace.

If $\uk=(k_1,\dots,k_v)$ is a sequence of natural numbers with $k=\sum_{i=1}^vk_i$, and  $\unu=(\nu_1,\dots,\nu_n)$ is a sequence of conjugacy classes in $G$ (possibly different than those contained in $c$), and $\omega\in G$ is any element, we let $\CHom^c_{\uk,\unu,\omega}(\pi_1(\FmbP_k,*),G)\subseteq\CHom^c(\pi_1(\FmbP_k,*),G)$ denote the subset of group homomorphisms $\phi$ such that:
\begin{itemize}
\item there are precisely $k_i$ values of $1\le j\le k$ for which $\phi(\gamma_j)\in c_j$;
\item $\phi(\delta_i)\in\nu_i$, and $\phi(\delta_0)=\omega$.
\end{itemize}
It is a $B^\cF_k$-invariant subset, and we let $\dCHur^{G,c}_{\uk,\unu,\omega}(\cF)\subseteq\dCHur^{G,c}_k(\cF)$ denote the corresponding subspace. For reasons that will become clear in Example \ref{ex:tbrvexplained}, we also denote by $\tbrv\colon\dCHur^{G,c}_{\uk,\unu,\omega}(\cF)\to\Conf_{\uk}(\mcF)$ the covering map sending $(P,\phi)$ to the set $P$, each of whose points is coloured by the conjugacy class of its local monodromy with respect to $\phi$.
\end{defn}

\begin{ex}
\label{ex:tbrvexplained}
Let $B=\set{a,b_1,\dots,b_n}\subseteq \Sigma$ be a finite subset of cardinality $n+1$, and fix disjoint open discs $\uU=(U_0,\dots, U_n)$ in $\Sigma$, with $a\in U_0$ and $b_i\in U_i$. Let $\cF=\SmU$, and let $*\in\partial_0\cF$ be any point.

Consider the setting of Example \ref{ex:mainsetting}, and 
let $\uk\in\Xi_k$ as in Notation \ref{nota:Xi}, with the number $k_{\trsp}$ associated with the conjugacy class of transpositions being strictly positive. 
Let $\unu=\uone$ be the sequence repeating $n$ times the conjugacy class of $\one\in\fS_l$, and let also $\omega=\one\in\fS_l$. 
Then there is a commutative diagram as follows, in which the horizontal arrows are both open inclusions and weak equivalences:
\[
\begin{tikzcd}
\dCHur^{\fS_l,c}_{\uk,\uone,\one}(\cF)\ar[d,"\tbrv"]\ar[r,hook,"\simeq"]&((F_k\setminus F_{k-1})\dCBra^{g,l}_{\Sigma\rel a\et B})_{\uk}\ar[d,"\tbrv"]\\
\Conf_{\uk}(\mcF)\ar[r,hook,"\simeq"]&\Conf_{\uk}(\SmB).
\end{tikzcd}
\]
Here, the top inclusion sends a pair $(P,\phi)$ to the essentially unique triple $(C,\theta,\bt)$
such that:
\begin{itemize}
\item $\theta\colon C\to\Sigma$ is \'etale over the closure of $\uU$, and in particular we may extend $\bt$ to a trivialisation of $\theta$ over $U_0$;
\item the restriction of $\theta$ over $\cF$, with the given trivialisation over $*\in\partial_0\cF$, is isomorphic to the $l$-fold branched cover trivialised over $*$ associated with $(P,\phi)$.
\end{itemize}
The fact that the top map lands inside $\dCBra^{g,l}_{\Sigma\rel a\et B}\subseteq\CBra^{g,l}_{\Sigma\rel a\et B}$ is a consequence of the discussion from Subsection \ref{subsec:TMG} and the assumption $k_{\trsp}\ge1$. Lemma \ref{lem:third_reduction} invites us to study the cohomology behaviour of the right vertical map, but we may henceforth focus on the left vertical map instead, at least to deal with those $\uk\in\Xi_k$ satisfying $k_{\trsp}\ge1$. If $k_\trsp=0$, the top right space is empty, according to the definition of $\dCBra^{g,l}_{\Sigma\rel a\et B}$ that we gave, and we have no such square.
\end{ex}
\begin{nota}
\label{nota:uphi}
We represent a group homomorphism $\phi\colon\pi_1(\FmbP_k,*)\to G$ by the list $\uphi\coloneqq(\phi(\gamma_1),\dots,\phi(\gamma_k),\phi(\alpha_1),\phi(\beta_1),\dots,\phi(\alpha_h),\phi(\beta_h),\phi(\delta_1),\dots,\phi(\delta_n)
)$ of its values at the generators from Notation \ref{nota:standardgenerators}, taken in this order. We say that $\uphi$ is \emph{standard} if there is $0\le r\le k$ such that $\phi(\gamma_i)\in c_1$ if and only if $i\le r$.
\end{nota}
We next single out a few braids in $B_k^\cF$; our goal is to study their action on the set $\Hom(\pi_1(\FmbP,*),G)\cong G^{k+2h+n}$ to a sufficient extent for our purposes.
\begin{nota}
For $1\le m\le k$ and $\gamma\in\pi_1(\cF\setminus(\bar P_k\setminus \bar P_m),*)$ we denote by $\fb_m(\gamma)\in B_k^\cF$ any braid corresponding to a homeomorphism $\FmbP_k\to\FmbP_k$ obtained by dragging $V_k\setminus\bar P_k$ along $\gamma$. Here we use the paths $\zeta_k$ from Notation \ref{nota:standardgenerators} to let a based loop at $*$ drag $V_k$; the resulting element in $B_k^\cF$ is uniquely determined up to right multiplication by an element in the subgroup $B_m\subseteq B_k^\cF$. In particular, $\fb_1(\gamma)$ is uniquely determined.
\end{nota}
We shall make use of the observations contained for future reference in the following lemma, whose diagramatic proof is standard and left to the reader. See \cite[Remark 8.1.6]{EL} for similar computations.
\begin{lem}
\label{lem:braidactionrules}
Let $\uphi$ be as in Notation \ref{nota:uphi}, and fix $1\le m\le k$ such that there is $\fg\in G$ with $\phi(\gamma_1)=\dots=\phi(\gamma_m)=\fg$. Then the following hold.
\begin{itemize}
\item Suppose first that $\fg^m=\one\in G$, let $\gamma\in\pi_1(\cF\setminus(\bar P_k\setminus \bar P_m),*)$, and let $\tilde\gamma$ be any lift of $\gamma$ to $\pi_1(\cF\setminus(V_m\cup\bar P_k))$; then $\fb_m(\gamma)$ sends $\uphi$ to the sequence obtained from $\uphi$ by replacing the first $m$ entries, all equal to $\fg$, with $m$ entries all equal to $\phi(\tilde\gamma)\fg\phi(\tilde\gamma)^{-1}$; this is in particular independent of the choice of $\fb_m(\gamma)$, since $\uphi$ is fixed by the action of $B_m$;
\item Now suppose $m=1$; then, for the following values of $\gamma$, the image  $\uphi'$ of $\uphi$ along $\fb_1(\gamma)$ satisfies the corresponding properties:
\begin{enumerate}
\item{\bf $\gamma=\gamma_i$ for some $2\le i\le k$:} we have $\phi'(\gamma_i)=\fg\phi(\gamma_i)\fg^{-1}$, and $\uphi'$ coincides with $\uphi$ in all subsequent positions;
\item{\bf $\gamma=\alpha_i$ for some $1\le i \le h$:} we have $\phi'(\alpha_i)=\fg\phi(\alpha_i)\fg^{-1}$, $\phi'(\beta_i)=\phi(\beta_i)\fg^{-1}$, and $\uphi'$ coincides with $\uphi$ in all subsequent positions;
\item{\bf $\gamma=\alpha_i\beta_i\alpha_i^{-1}$ for some $1\le i \le h$:} we have $\phi'(\alpha_i)=\fg\phi(\alpha_i)$, $\phi'(\beta_i)=\phi(\beta_i)$, and $\uphi'$ coincides with $\uphi$ in all subsequent positions;
\item{\bf $\gamma=\delta_i$ for some $1\le i\le n$:} we have $\phi'(\delta_i)=\fg\phi(\delta_i)\fg^{-1}$, and $\uphi'$ coincides with $\uphi$ in all subsequent positions.
\end{enumerate}
\end{itemize}
\end{lem}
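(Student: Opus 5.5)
The plan is to compute the point-pushing action directly on monodromy data, using the identification of $B_k^\cF$ with mapping classes of $\FmbP_k$ fixing $\partial\cF$ pointwise. A homeomorphism $f$ acts on $\phi$ by $\phi\mapsto\phi\circ f_*^{-1}$ (or $f_*$, depending on the convention fixed in Definition \ref{defn:CHur}). So it suffices to compute $f_*$ on the free generators $\gamma_1,\dots,\gamma_k,\alpha_1,\beta_1,\dots,\alpha_h,\beta_h,\delta_1,\dots,\delta_n$ from Notation \ref{nota:standardgenerators}, for $f$ a representative of $\fb_m(\gamma)$. I will choose $f$ supported in a regular neighbourhood $W$ of the union of $\bar V_m$, the arc $\zeta_m$, and a representative of $\gamma$ that avoids $\bar P_k\setminus\bar P_m$. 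Generators whose representatives can be isotoped off $W$ are then fixed, and this is the source of every clause of the form ``$\uphi'$ coincides with $\uphi$ in all subsequent positions''.

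For the first bullet, the key point is that $f$ moves the disc $V_m$ along $\gamma$ as a rigid block. Consequently $f_*$ sends each $\gamma_j$ with $j\le m$ to a conjugate $\tilde\gamma\gamma_j\tilde\gamma^{-1}$ of a loop inside $V_m$, up to an element of $B_m$ acting inside the disc. It multiplies the other generators that $\gamma$ crosses by powers of the boundary loop $\zeta_m\star\partial V_m\star\zeta_m^{-1}=\gamma_1\cdots\gamma_m$. Under $\phi$ this boundary loop evaluates to $\fg^m=\one$, so every such correction disappears. The entries in positions $\le m$ all become $\phi(\tilde\gamma)\fg\phi(\tilde\gamma)^{-1}$, and these are unchanged by the braid ambiguity because $B_m$ only permutes equal entries via Hurwitz moves, and Hurwitz moves on a constant sequence give back the same sequence. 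Independence of the lift $\tilde\gamma$ follows because two lifts differ by loops around $V_m$, and those conjugate $\fg$ to itself.

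For the second bullet ($m=1$), I will treat cases (1)--(4) one at a time. In each case I draw the image of the affected generators under the point push of $\bar z_1$ along $\gamma$. This is the standard computation of Birman's push map (compare \cite[Remark 8.1.6]{EL}): a generator that crosses the trajectory of $\bar z_1$ gets conjugated by, or multiplied by, the loop $\gamma_1$, with the side determined by the orientation. Evaluating with $\phi(\gamma_1)=\fg$ gives the stated formulas. For instance, in case (2), pushing along $\alpha_i$ conjugates $\alpha_i$ by $\gamma_1$ and turns $\beta_i$ into $\beta_i\gamma_1^{-1}$. In case (3), pushing along $\alpha_i\beta_i\alpha_i^{-1}$ crosses only $\alpha_i$, which becomes $\gamma_1\alpha_i$.

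The main obstacle is keeping the signs and orientation conventions consistent: clockwise small loops, the counterclockwise $\delta_0$, and the ordering in \eqref{eq:delta0formula}. As a check I will verify that \eqref{eq:delta0formula} is preserved in each case, meaning that $\phi'(\delta_0)=\phi(\delta_0)$, since $f$ fixes $\partial\cF$. This invariant pins down the side on which $\fg$ appears in cases (2) and (3). The remaining verification is a routine picture.
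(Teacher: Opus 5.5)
Your proposal is correct and matches the paper, which gives no written argument and only calls the lemma a standard diagrammatic computation (pointing to \cite[Remark 8.1.6]{EL}); your disc-push and point-push computation is exactly that computation, using $\fg^m=\one$ to kill the inserted corrections in the first bullet and the support of the push to obtain the ``subsequent positions'' clauses in the second. Two small precisions: the corrections in the first bullet are products of \emph{conjugates} of $\zeta_m\star\partial V_m\star\zeta_m^{-1}$, not powers of it (which is also why independence of the lift genuinely needs $\fg^m=\one$), and your $\delta_0$-check must also track earlier entries such as $\phi'(\gamma_1)$, which the lemma leaves unspecified.
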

\begin{nota}
\label{nota:fj}
We abbreviate by $\fo\ge1$ the order of an element in $c_1\subseteq G$, and we let $\fj$ be the minimal number $r\ge1$ satisfying  the following property:
for any $(\fg_1,\dots,\fg_r)\in c_1^r$ there exists an element of the standard braid group $B_r$ whose image $(\fg'_1,\dots,\fg'_r)$ satisfies the following:
\begin{itemize}
\item $\fg'_1=\dots=\fg'_{\fo}$;
\item the subgroup of $G$ generated by $\fg'_{\fo+1},\dots,\fg'_r$ coincides with the subgroup generated by $\fg'_1,\dots,\fg'_r$.
\end{itemize}
\end{nota}
We remark that $\fj\le(\# c_1\cdot\fo)+1$: indeed, if $r\ge (\# c_1\cdot\fo)+1$ and if $(\fg_1,\dots,\fg_r)\in c_1^r$, then the pigeonhole principle implies that there exist $\fo+1$ equal entries in the previous sequence, say with common value $\fg\in c_1$. By the action of a suitable braid we may transform $(\fg_1,\dots,\fg_r)$ into a sequence beginning with $\fo+1$ instances of $\fg$, and this surely satisfies the requirements from Notation \ref{nota:fj}.
\begin{ex}
In the setting of Example \ref{ex:mainsetting} we have $\fo=2$ and $\fj=l+1$; this can be seen for instance as a consequence of \cite[Proposition 7.13]{Bianchi:Hur1}.
\end{ex}
\begin{lem}
\label{lem:basicreplacement}
Let $\uphi\in\CHom^c_{\uk,\unu,\omega}(\pi_1(\FmbP,*),G)$ for some $\uk,\unu,\omega$ as in Definition \ref{defn:CHur}. Assume that $\uphi$ is standard in the sense of Notation \ref{nota:uphi}. Assume $k_1\ge\fj$. For $1\le i\le n$ fix elements $\fg_i\in\nu_i$, and for $\fj+1\le i\le k$ fix elements $\fg'_i$ conjugate to $\phi(\gamma_i)$. Then there is a braid in $B_k^\cF$ sending $\uphi$ to a sequence $\uphi'$ satisfying $\phi'(\gamma_i)=\fg'_i$ for $i\ge \fj+1$, $\phi'(\alpha_i)=\phi'(\beta_i)=\one$, $\phi'(\delta_i)=\fg_i$, and moreover the elements $\phi'(\gamma_1),\dots,\phi'(\gamma_\fj)$ generate $G$.
\end{lem}
\begin{proof}
We refer to the operations described in Lemma \ref{lem:braidactionrules}.
Up to a braid in $B_{\fj}$ we may assume $\phi(\gamma_1)=\dots=\phi(\gamma_\fo)$ and that the image of $\pi_1(\cF\setminus(V_{\fj}\cup \bar P_k),*)$ along $\phi$ is the entire $G$. For any element $\fg\in c_1$ we may then choose a suitable element $\gamma\in\pi_1(\cF\setminus(\bar P_k\setminus\bar P_{\fo}),*)$ such that $\fb_{\fo}(\gamma)$ sends $\uphi$ to the sequence obtained from $\phi$ by replacing the first $\fo$ entries by $\fg$. Assuming now $\phi(\gamma_1)=\fg$, we may use operation (4) to conjugate $\phi(\delta_i)$ by $\fg$, while keeping untouched the subsequent entries of $\uphi$. Repeating the procedure several times, we may conjugate $\phi(\delta_i)$ by any word in $c_1$, that is, by any element of $G$: we may therefore achieve $\phi(\delta_i)=\fg_i$. We may repeat this process for decreasing $i$, until the equality $\phi(\delta_i)=\fg_i$ holds simultaneously for all $1\le i\le n$. We next operate a similar strategy to adjust first $\phi(\beta_i)$ using operation (2), and then $\phi(\alpha_i)$, by operation (3); we adjust these pairs for decreasing $1\le i\le h$. Finally, we operate similarly, but via operations (1), to adjust $\phi(\gamma_i)=\fg'_i$ for decreasing $\fj+1\le i\le k$. 

Suppose now that the subgroup $H$ generated by $\phi(\gamma_1),\dots,\phi(\gamma_\fj)\in c_1$ is not the entire $G$, and let $\fg\in c_1\setminus H$. By a suitable braid in $B_\fj$ we may achieve that $\phi(\gamma_1)=\dots=\phi(\gamma_\fo)$ and that $\phi(\gamma_{\fo+1}),\dots,\phi(\gamma_\fj)$ generate $H$. By a suitable operation of the first kind from Lemma \ref{lem:braidactionrules} we may now replace the first $\fo$ entries of $\uphi$ by the element $\fg$. After this replacement, the subgroup of $G$ generated by $\phi(\gamma_1),\dots,\phi(\gamma_\fj)$ is strictly larger than $H$, in that it contains $H$ and also $\fg$. Repeating this procedure a suitable number of times gives the result.
\end{proof}
\begin{cor}
\label{cor:CHurconnected}
Consider the setting of Example \ref{ex:mainsetting} and let $\uk,\unu,\omega$ be as in Definition \ref{defn:CHur}. Assume also that $k_\trsp\ge 2l-2$ and that the sum 
\[
\sum_{\lambda\subseteq c}k_\lambda\cdot N(\lambda)+\sum_{i=1}^nN(\nu_i)+N(\omega)
\]
is even. Then the space $\dCHur^{\fS_d,c}_{\uk,\unu,\omega}(\cF)$ is connected.
\end{cor}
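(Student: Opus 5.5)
We must show that the braid group $B_k^\cF$ acts transitively on $\CHom^c_{\uk,\unu,\omega}(\pi_1(\FmbP_k,*),\fS_l)$. The plan is to move every element, by braids, into one explicit normal form, and then show that elements in normal form can be moved into each other.

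\textbf{Step 1: standardise.} Given any $\uphi$ in this set, we first use braids in the standard braid group $B_k$, supported on $V_k$, to make $\uphi$ standard in the sense of Notation \ref{nota:uphi}. Concretely, we use Hurwitz moves to push all entries lying in $c_1$ (the transpositions) to the front. This uses that a Hurwitz move $(x,y)\mapsto(xyx^{-1},x)$ preserves the multiset of conjugacy classes.

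\textbf{Step 2: normalise the tail.} We have $k_\trsp\ge 2l-2\ge l+1=\fj$ for $l\ge 3$; the case $l=2$ is the classical one and is treated separately. So Lemma \ref{lem:basicreplacement} applies. Fix once and for all representatives $\fg_i\in\nu_i$, and fix representatives $\fg'_i$ for positions $i\ge\fj+1$, chosen in a fixed standard order of the colours. The lemma moves $\uphi$ to a $\uphi'$ with these prescribed values on $\gamma_i$ for $i>\fj$ and on $\delta_i$, and with $\alpha_i,\beta_i\mapsto\one$. It also gives that $\phi'(\gamma_1),\dots,\phi'(\gamma_\fj)$ generate $\fS_l$.

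The product relation \eqref{eq:delta0formula} together with $\phi'(\delta_0)=\omega$ then forces
\[
\phi'(\gamma_1)\cdots\phi'(\gamma_\fj)=\omega\,\bigl(\fg_n^{-1}\cdots\bigr)\bigl(\fg'_k{}^{-1}\cdots\bigr),
\]
and the right-hand side is a fixed element $\pi\in\fS_l$ independent of $\uphi$. Hence the remaining datum is a tuple $(\tau_1,\dots,\tau_\fj)$ of transpositions that generates $\fS_l$ and has fixed product $\pi$.

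\textbf{Step 3: transitivity on the head.} We must show $B_\fj$ acts transitively on such tuples. This is the classical Clebsch--Hurwitz theorem: the Hurwitz action is transitive on generating tuples of transpositions with prescribed product. The parity hypothesis guarantees exactly that $\pi$ has sign $(-1)^\fj$, i.e.\ that the set is consistent. We must also account for the fact that $\fj$ may exceed the minimal length: the number of transpositions $k_\trsp$ may be larger than $\fj$, and the extra transpositions sit in the tail. In that case we apply Step 2 with the block of the first $k_\trsp$ entries rather than the first $\fj$, so that all transpositions are in the head and the tail consists only of non-transposition classes. This keeps the head a generating tuple of $k_\trsp\ge 2l-2$ transpositions with fixed product, and Clebsch--Hurwitz applies; the bound $2l-2$ is what it needs when the product is arbitrary.

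\textbf{Main obstacle.} The hardest point is making Step 2 compatible with Step 3. Lemma \ref{lem:basicreplacement} fixes tail values only beyond position $\fj$, so we need a variant that leaves all $k_\trsp$ transpositions in the head while still controlling the generation property. We would first try simply rerunning the proof of the lemma with $\fj$ replaced by $k_\trsp$, checking that the generation argument goes through unchanged. We would also double-check that the sign computation yields precisely the stated evenness condition, using that a permutation of norm $N$ has sign $(-1)^N$.
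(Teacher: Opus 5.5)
Your transitivity argument matches the paper's. You standardise by braids in $B_k$, apply Lemma~\ref{lem:basicreplacement} with the same choices to both homomorphisms so that they agree past position $\fj=l+1$ and on all $\alpha_i,\beta_i,\delta_i$, and read off from \eqref{eq:delta0formula} that the two heads have equal product. You then conclude by Hurwitz transitivity on generating tuples of transpositions with prescribed product (\cite[Proposition 7.11]{Bianchi:Hur1}).

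\textbf{The gap: non-emptiness.} You never show that $\CHom^c_{\uk,\unu,\omega}(\pi_1(\FmbP_k,*),\fS_l)\neq\emptyset$. Transitivity only gives ``empty or connected''. The corollary needs a point, and so does its use in the proof of Theorem~\ref{thm:Thetaisomorphism}, where $\tbrv$ must be an isomorphism on $H_0$ onto the connected space $\Conf_{\uk}(\mcF)$. This is exactly where both numerical hypotheses are used. The paper's construction is:
\begin{enumerate}
\item send $\delta_i$ to some $\fg_i\in\nu_i$ and $\alpha_i,\beta_i$ to $\one$;
\item fill $\gamma_{2l-1},\dots,\gamma_k$ with the prescribed classes, using transpositions $k_\trsp-2l+2$ times;
\item let $\sigma$ be the value forced on $\gamma_1\cdots\gamma_{2l-2}$ by \eqref{eq:delta0formula} and $\phi(\delta_0)=\omega$.
\end{enumerate}
The parity hypothesis says precisely that $\sigma$ is even. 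An even $\sigma$ is a product of $2l-2$ transpositions whose first $l-1$ generate $\fS_l$. Take $(1\,2),\dots,(l-1\,l)$, whose product $\rho$ is an $l$-cycle. Then $\rho^{-1}\sigma$ has norm at most $l-1$ with the same parity as $l-1$, so padding with repeated pairs writes it as exactly $l-1$ transpositions. Your remark that parity makes ``the set consistent'' is only a necessary condition, automatic once the set is non-empty; it produces no element.

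\textbf{The ``main obstacle'' is not one.} Lemma~\ref{lem:basicreplacement} prescribes arbitrary conjugates $\fg'_i$ of $\phi(\gamma_i)$ for every $i\ge\fj+1$, including the surplus transpositions. After applying it to both homomorphisms, the heads are generating $\fj$-tuples of transpositions with the same product, and Hurwitz transitivity applies directly. It needs no length bound $2l-2$; the paper uses it in length $l+1$. Apart from the existence argument, $k_\trsp\ge 2l-2$ is only used to get $k_\trsp\ge\fj$ for $l\ge3$. Your variant with a head of length $k_\trsp$ would also work, but is unnecessary.

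\textbf{The case $l=2$.} You defer it; it is easy but should be written out. All $\gamma_i\mapsto(1\,2)$ and the $\delta_i$-values are forced. In $\fS_2$ the parity hypothesis is exactly the relation \eqref{eq:delta0formula}, so the set is non-empty. The values on $\alpha_i,\beta_i$ can be adjusted arbitrarily, for decreasing $i$, by moves (2) and (3) of Lemma~\ref{lem:braidactionrules}.
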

\begin{proof}
It suffices to prove that the set $\CHom^{c}_{\uk,\unu,\omega}(\pi_1(\FmbP,*),\fS_l)$ is non-empty, and that it consists of a single orbit under the $B^\cF_k$-action. For $l=2$ this set consists of a single element, so let us assume $l\ge3$ henceforth.

To find some $\uphi\in \CHom^{\fS_l\setminus\one}_{\uk,\unu,\omega}(\pi_1(\FmbP,*),\fS_l)$, we start by choosing arbitrary elements $\phi(\delta_i)\in\nu_i$, we set for simplicity $\phi(\alpha_i)=\phi(\beta_i)=\one$, and we also fix the elements $\phi(\gamma_{2l-1}),\dots,\phi(\gamma_k)$ so that the conjugacy class of transpositions occurs $k_\trsp-2l+2$ times, and each other $\lambda\subseteq c$ occurs $k_\lambda$ times. We may then compute the product
\[
\sigma=\omega\,\phi\pa{\gamma_{2l-1}\dots\gamma_k[\alpha_1,\beta_1]\dots[\alpha_h,\beta_h]\delta_1\dots\delta_n}^{-1}\in\fS_l,
\]
which by the assumptions is an even permutation. We may therefore write $\sigma$ as a product of $2l-2$ transpositions $\phi(\gamma_1)\dots\phi(\gamma_{2l-2})$, and we may even pick the first $l-1$ to be generators of $\fS_l$.

To see that there is a single $B^\cF_k$-orbit, we let $\uphi,\uphi'\in \CHom^{\fS_l\setminus\one}_{\uk,\unu,\omega}(\pi_1(\FmbP,*),\fS_l)$;
up to applying a braids in $B_k$ to $\uphi$ and $\uphi'$, we may assume that $\phi(\gamma_i)$ is conjugate to $\phi'(\gamma_i)$ for all $i$, and moreover that $\phi(\gamma_i)$ and $\phi'(\gamma_i)$ are transpositions for $1\le i\le 2l-2$. By Lemma \ref{lem:basicreplacement}, we may apply braids in $B^\cF_k$ to $\uphi$ and $\uphi'$ and further assume that they coincide past the first $\fj=l+1$ entries; we may moreover assume that both lists $\phi(\gamma_1),\dots,\phi(\gamma_{l+1})$ and $\phi'(\gamma_1),\dots,\phi'(\gamma_{l+1})$ generate $\fS_l$. Since we also have
\[
\phi(\gamma_1\dots\gamma_{l+1})=\omega\,\phi\pa{\gamma_{l+2}\dots\gamma_k[\alpha_1,\beta_1]\dots[\alpha_h,\beta_h]\delta_1\dots\delta_n}^{-1}=\phi'(\gamma_1\dots\gamma_{l+1}),
\]
by \cite[Proposition 7.11]{Bianchi:Hur1} we may find a braid in $B_{l+1}$ sending $\uphi$ to $\uphi'$.
\end{proof}
\begin{lem}
\label{lem:quotientmodule}
Consider on the set $\Hom(\pi_1(\cF),G)$ the equivalence relation generated by $\uphi\sim\uphi'$ if there exist elements $\fg,\fg'\in c_1$ such that the concatenations $(\fg,\uphi),(\fg',\uphi')$, viewed as elements of $\Hom^c(\pi_1(\FmbP_1,*),G)$ in the light of Notation \ref{nota:uphi}, belong to the same $B_1^\cF$-orbit. Then the equivalence classes of $\phi$ are in natural bijection with the set of $n$-tuples $\unu=(\nu_1,\dots,\nu_n)$ of conjugacy classes in $G$, the bijection sending $\uphi$ to the sequence of conjugacy classes of $\phi(\delta_i)$ for $1\le i\le n$.
\end{lem}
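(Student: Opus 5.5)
The plan is to show that the map $\uphi\mapsto(\text{conjugacy class of }\phi(\delta_i))_{1\le i\le n}$ is constant on equivalence classes, is surjective, and separates classes. Note that $\pi_1(\cF,*)$ is free on $\alpha_1,\beta_1,\dots,\alpha_h,\beta_h,\delta_1,\dots,\delta_n$, since $\delta_0$ is determined by \eqref{eq:delta0formula} with $k=0$. Hence $\Hom(\pi_1(\cF),G)\cong G^{2h+n}$, and $\uphi$ is the list of Notation \ref{nota:uphi} with the $\gamma$-entries removed.

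\emph{Invariance.} It suffices to treat a single generating relation $\uphi\sim\uphi'$, so suppose $(\fg',\uphi')=b\cdot(\fg,\uphi)$ for some $b\in B_1^\cF$. The braid $b$ is realised by a homeomorphism of $\cF\setminus\bar P_1$ fixing $\partial\cF$ pointwise. Such a homeomorphism sends the loop $\delta_i$, $1\le i\le n$, to a loop freely homotopic to $\delta_i$ in $\cF\setminus\bar P_1$, that is, to a conjugate of $\delta_i$ in $\pi_1(\cF\setminus\bar P_1,*)$. Therefore $\phi'(\delta_i)$ is conjugate to $\phi(\delta_i)$ for every $1\le i\le n$, and the map to $n$-tuples of conjugacy classes descends to equivalence classes.

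\emph{Surjectivity.} This is immediate. Given $\unu=(\nu_1,\dots,\nu_n)$, choose $\fg_i\in\nu_i$ and take $\uphi$ with $\phi(\alpha_j)=\phi(\beta_j)=\one$ and $\phi(\delta_i)=\fg_i$. No relation needs to be checked, because $\pi_1(\cF,*)$ is free.

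\emph{Injectivity.} Fix $\unu$ and the representative $\uphi_{\mathrm{std}}$ just constructed. I will show that every $\uphi$ with $\phi(\delta_i)\in\nu_i$ is equivalent to $\uphi_{\mathrm{std}}$. The tool is the $m=1$ part of Lemma \ref{lem:braidactionrules}. Each application of some $\fb_1(\gamma)$ to $(\fg,\uphi)$ with $\fg\in c_1$ is one generating step of $\sim$. The first entry may change, but only to a conjugate of $\fg$, so it stays in $c_1$. Moreover, before each step we may prepend any element $\fg\in c_1$ we like.

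I will follow the strategy of the proof of Lemma \ref{lem:basicreplacement}, working from the end of the list backwards, so that later entries are never disturbed.
\begin{itemize}
\item For $i=n,n-1,\dots,1$, use operation (4) repeatedly to conjugate $\phi(\delta_i)$ by arbitrary elements of $c_1$. Since $c_1$ generates the finite group $G$, every element of $G$ is such a product. Thus we reach $\phi(\delta_i)=\fg_i$ while all subsequent entries stay fixed.
\item For $i=h,\dots,1$, use operation (2) repeatedly to right-multiply $\phi(\beta_i)$ by inverses of elements of $c_1$, and so reach $\phi(\beta_i)=\one$. This also conjugates $\phi(\alpha_i)$, which is harmless. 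Then use operation (3) repeatedly, which left-multiplies $\phi(\alpha_i)$ by elements of $c_1$ while keeping $\phi(\beta_i)=\one$, to reach $\phi(\alpha_i)=\one$.
\end{itemize}
Each operation leaves all subsequent positions unchanged, and the decreasing order guarantees that entries already adjusted are never touched again. The result is $\uphi_{\mathrm{std}}$.

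The main obstacle is to verify that the single-point operations of Lemma \ref{lem:braidactionrules} apply here for an arbitrary $\fg\in c_1$, even though $\fg^1\neq\one$, and that their effect on the entries preceding the modified ones is irrelevant. This is exactly why the adjustments must be made in decreasing order, as in Lemma \ref{lem:basicreplacement}.
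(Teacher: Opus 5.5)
Your proposal is correct and follows the paper's proof essentially verbatim. Invariance of the conjugacy classes of the $\delta_i$-entries comes first. Then you adjust the $\delta_i$ for decreasing $i$ via operation (4), and the pairs $(\beta_i,\alpha_i)$ for decreasing $i$ via operations (2) and (3), freely choosing the prepended $\fg\in c_1$ at each step. Two small differences from the paper: you normalise to a standard representative rather than moving directly to $\uphi'$, and you spell out surjectivity, which the paper leaves implicit.

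The ``obstacle'' you flag is not one. The $m=1$ operations in Lemma \ref{lem:braidactionrules} carry no hypothesis $\fg^m=\one$; that hypothesis is only needed for the first bullet.
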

\begin{proof}
It is evident that the conjugacy classes of the  $\delta_i$-entries are invariant under the equivalence relation $\sim$. Viceversa, if $\uphi$ and $\uphi'$ satisfy that $\phi(\delta_i)$ is conjugate to $\phi'(\delta_i)$ for all $1\le i\le n$, then we may use a strategy as in the proof of Lemma \ref{lem:basicreplacement}: we may first conjugate $\phi(\delta_n)$ by a sequence of elements of $c_1$, via operations of type (4), until we achieve $\phi(\delta_n)=\phi'(\delta_n)$. We then proceed in a similar way to gradually change $\phi(\delta_{n-1})$ into $\phi'(\delta_{n-1})$, and so on, until $\phi(\delta_i)=\phi'(\delta_i)$ for all $1\le i\le n$. We next adjust the elements $\phi(\beta_i),\phi(\alpha_i)$ for decreasing $1\le i\le h$. The procedure is in fact simpler than the one in Lemma \ref{lem:basicreplacement}, as we may each time directly pick any element $\fg\in c_1$ to concatenate with $\uphi$, instead of producing it via an operation of the first kind described in Lemma \ref{lem:braidactionrules}.
\end{proof}
We are now ready to prove Theorem \ref{thm:Thetaisomorphism}. The ingredients of the proof will justify together Notation \ref{nota:goodconstantsIJ}, and for this reason we have postponed to this point the discussion about the actual values of the constants $I_l,J_l$.
\begin{proof}[Proof of Theorem \ref{thm:Thetaisomorphism}]
By Lemmas \ref{lem:first_reduction}, \ref{lem:second_reduction} and \ref{lem:third_reduction} and Example \ref{ex:tbrvexplained},  it suffices to show, for all $1\le k\le 2\upsilon$, that the map $\tbrv\colon\dCHur^{\fS_l,\fS_l\setminus\one}_{\uk,\uone,\one}(\cF)\to\Conf_{\uk}(\cF)$ is a rational homology isomorphism in degrees $*<(2k-2\upsilon-J_l)/I_l$, for any $\uk$ in the set $\Xi_k\cong\pi_0(\Conf_k(\cF,\fP(l)_+)_{2\upsilon})$ form Notation \ref{nota:Xi}. 
If $2k-2\upsilon<J_l$ there is nothing to prove, so let us assume $2k-2\upsilon\ge J_l$ from now on. 
We next observe that if $\uk\in\Xi_k$, that is, if $\sum_{\lambda\in\fP(l)_+}k_\lambda=k$ and $\sum_{\lambda\in\fP(l)_+}kN(\lambda)=2\upsilon$, then we must have $k_1=k_\trsp\ge 2k-2\upsilon$. The constraint $J_l\ge 2l-2$ in Notation \ref{nota:goodconstantsIJ} will then ensure, together with Corollary \ref{cor:CHurconnected}, that $\dCHur^{\fS_l,\fS_l\setminus\one}_{\uk,\uone,\one}(\cF)$ is connected.

Now let $c\subseteq \fS_l\setminus\set{\one}$ be the union of conjugacy classes $\lambda\in\fP(l)_+$ with $k_\lambda\ge1$, and let $c_1\subseteq c$ denote the conjugacy class of transpositions. We may identify $\dCHur^{\fS_l,c}_{\uk,\uone,\one}(\cF)$ also as a component of $\Hur^{\fS_l,c}_k(\cF)$, which for simplicity we denote
$\dCHur^{\fS_l,c}_{\uk,\uone,\one}(\cF)$, even though we should first remove from $\uk$ all occurrences of 0. We may therefore study the map
\begin{equation}
\label{eq:ultimatetbrv}
\tbrv\colon\dCHur^{\fS_l,c}_{\uk,\uone,\one}(\cF)\to\Conf_{\uk}(\cF).
\end{equation}

We next recall from \cite[§2]{LL2} the notion of Hurwitz module. In particular, by \cite[Example 2.1.3]{LL2}, the sequence of sets $\Hom^c(\pi_1(\FmbP_m,*),\fS_l)$ for $m\ge0$ forms a ``bijective'' Hurwitz module over the rack $c$. We may consider the bijective Hurwitz submodule $S$ spanned by the subsets $\Hom^c_{\uone}(\pi_1(\FmbP_m,*),\fS_l)$, comprising group homomorphisms sending each $\delta_i\mapsto\one$.
We may then identify $\dCHur^{\fS_l,c}_{\uk,\uone,\one}(\cF)$ with a component of $\CHur^{c,S}_k$ in the sense of \cite{LL2}\footnote{We have been notified by the authors that the notation $\mathrm{MHur}^{c,S}_k$ shall be used in place of $\CHur^{c,S}$ in a forthcoming version of the article.}.

We next aim at computing the quotient module $(c/c_1,S/c_1)$ in the sense of \cite[§2.3]{LL2}. The quotient rack $c/c_1$ is the abelian rack whose elements are the conjugacy classes of $\fS_l$ contained in $c$; and by Lemma \ref{lem:quotientmodule} we have that $S/c_1$ has a point as $0$\textsuperscript{th} set , and therefore $(c/c_1)^m$  as $m$\textsuperscript{th} set. This implies that $\Hur_k^{c/c_1,S/c_1}$, in the notation of \cite{LL2}, is just $ \Conf_k(\mcF,c/c_1)$, and in particular we may identify $\Conf_{\uk}(\mcF)$ with a connected component of $\dCHur^{c/c_1,S/c_1}$.

We may now apply \cite[Theorem 1.4.9]{LL2} and conclude that, for large enough constants $I_l,J_l$, the map \eqref{eq:ultimatetbrv} induces a rational homology equivalence in degrees $*<(k_\trsp-J_l)/I_l$, and in particular in degrees $*<(2k-2\upsilon-J_l)/I_l$. Concretely, we identify the source of \eqref{eq:ultimatetbrv} with a component in $\CHur^{c,S}$, as explained above, and identify the target of \eqref{eq:ultimatetbrv} with a component in $\CHur^{c/c_1,S/c_1}$.
\end{proof}
\begin{nota}
\label{nota:goodconstantsIJ}
Relying on \cite[Theorem 1.4.9]{LL2},
we fix two sequences of positive constants $I_l,J_l$, for $l\ge2$, satisfying the following properties:
\begin{itemize}
\item $I_l\ge1$ (see Lemma \ref{lem:first_reduction});
\item $J_l\ge 2l-2$ (see Corollary \ref{cor:CHurconnected});
\item for all $1\le k\le 2\upsilon$ and all $\uk\in\Xi_k$ as in Notation \ref{nota:Xi}
the map
\[
\tbrv\colon\dCHur^{\fS_l,\fS_l\setminus\one}_{\uk,\uone,\one}(\cF)\to\Conf_{\uk}(\mcF)
\]
induces a rational homology isomorphism in degrees $*<(k_\trsp-J_l)/I_l$ (see the proof of Theorem \ref{thm:Thetaisomorphism});
\item 
both sequences $(J_l)_{l\ge2}$ and $(I_l)_{l\ge2}$ are weakly increasing with respect to divisibility: if $l,l'\ge2$ and $l\mid l'$ , then $J_l\le J_{l'}$ and $I_l\le I_{l'}$ (this is only to phrase Theorems \ref{thm:A} and \ref{thm:B} in terms of $I_d,J_d$ instead of $\max_{l\mid d}I_l,\max_{l\mid d}J_l$).
\end{itemize}
\end{nota}

\section{Proofs of the main theorems}
We fix $g\ge0$ and $d\ge2$ in the entire section. 
In this section we prove Theorems \ref{thm:A}, \ref{thm:B}, \ref{thm:D} and \ref{thm:E}; we conclude with Subsection \ref{subsec:oddexamples}, in which in particular we make some explicit computations for $d\le 5$.
\subsection{Proof of Theorem \ref{thm:B}}
We start by generalising Notation \ref{nota:ccH} in the light of Example \ref{ex:cHglhe} and Definition \ref{defn:dCBra}.
\begin{defn}
For $g\ge1$, $l\ge2$ and and $h\ge0$ we denote by $\ccH_{g,l}(\fp_h)\subseteq\cH_{g,l}(\fp_h)$ the open substack parametrising degree $l$ maps $\theta\colon C\to D$ admitting at least one simple branch value; and we denote by $\dcH_{g,l}(\fp_h)\subseteq\ccH_{g,l}(\fp_h)$ the open substack parametrising degree $l$ maps $\theta\colon C\to D$ that furthermore do not factor through any intermediate curve.

We similarly define, for $e\ge1$, the substacks $\dcH_{g,l}(\fp_{h,e})\subseteq\ccH_{g,l}(\fp_{h,e})\subseteq\cH_{g,l}(\fp_{h,e})$ as pullbacks of the previous along the map $\epsilon_{h,e}$. We denote by $\dq_{g,l,h,e}\colon\dcH_{g,l}(\fp_{h,e})\to\cH_{h,e}$ the restriction of the map $q_{g,l,h,e}$ from Notation \ref{nota:qglhe}.
\end{defn}
\begin{lem}
\label{lem:ccHcodimension}
For all $g\ge1$, $l\ge2$, $h\ge0$ and $e\ge1$ the topological stack $\cH_{g,l}(\fp_{h,e})\tp$ is an oriented orbifold of real dimension $4\upsilon(h,e,0)+4\upsilon(g,l,h)-6$. Moreover, $\ccH_{g,l}(\fp_{h,e})\tp$ is the complement of a closed topological stubstack of codimension at least $2\upsilon(g,l,h)$ in $\cH_{g,l}(\fp_{h,e})$. 
In particular, restriction induces an isomorphism $H^*(|\cH_{g,d}(\fp_{h,e})\tp|;\bQ)\cong H^*(|\ccH_{g,d}(\fp_{h,e})\tp|;\bQ)$ in the range of cohomological degrees $*<2\upsilon(g,l,h)-1$.
\end{lem}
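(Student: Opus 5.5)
We treat $\cH_{g,l}(\fp_{h,e})\tp$ as the total space of the map $q_{g,l,h,e}\tp$ to $\cH_{h,e}\tp$, whose fibre over a point $(D\to L)$ is $\CBra^{g,l}_\Sigma$, with $\Sigma$ of genus $h$. For the dimension count we first identify $\cH_{h,e}\tp$ with $\cH_{h,e}(\fp_0)\tp$. Over $\fM_0\tp\simeq\bbB\Homeo^+(S^2)$, the latter is locally the quotient of $\CBra^{h,e}_{S^2}$ by $\Aut(\bP^1)$. Proposition \ref{prop:orbifold} gives that $\CBra^{h,e}_{S^2}$ is an oriented orbifold of real dimension $4\upsilon(h,e,0)$. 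The group $\Aut(\bP^1)=PGL_2(\C)$ has real dimension $6$ and acts with finite stabilisers, since a degree $e\ge 2$ cover with $g\ge1$ has finitely many automorphisms (Remark \ref{rem:cHgdDMstack}). The case $e=1$, $h=0$ is degenerate: there $\cH_{0,1}\tp\simeq\fM_0\tp$ has dimension $-6$ and we use the same convention. Hence $\cH_{h,e}\tp$ is an oriented orbifold of dimension $4\upsilon(h,e,0)-6$.

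Next, $\cH_{g,l}(\fp_{h,e})\tp$ is an orbifold fibre bundle over $\cH_{h,e}\tp$ with fibre $\CBra^{g,l}_\Sigma$. The structure group $\Homeo^+(\Sigma)$ acts preserving the orientation, which comes from the complex structure of Hurwitz spaces with collisions in the proof of Proposition \ref{prop:orbifold}. Adding dimensions gives $4\upsilon(h,e,0)+4\upsilon(g,l,h)-6$, with orientability. Here one should work fibrewise in local charts, using the $\fS_l$-quotient description of $\CBra^{g,l}_{\Sigma\et a}$ from the proof of Proposition \ref{prop:orbifold}.

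For the codimension statement it suffices to work fibrewise: the closed complement of $\ccH$ is the $\Homeo^+(\Sigma)$-invariant closed substack $\CBra^{g,l}_\Sigma\setminus\{\text{covers with a simple branch value}\}$ of the fibre. We would stratify it by $\brv$ and local monodromy types, i.e. by the colour vectors $\uk\in\Xi_k$ with $k_\trsp=0$. In the stratum $(F_k\setminus F_{k-1})$ of type $\uk$ the map $\tbrv$ is finite étale onto $\Conf_{\uk}(\Sigma)$, which has real dimension $2k$. If no partition is a transposition, then every $N(\lambda)\ge2$, so $2\upsilon=\sum k_\lambda N(\lambda)\ge 2k$ and $k\le\upsilon$. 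The stratum thus has dimension at most $2\upsilon$, i.e. codimension at least $4\upsilon-2\upsilon=2\upsilon$ in the $4\upsilon$-dimensional fibre. The bad locus is a finite union of such locally closed strata, so it has codimension $\ge 2\upsilon(g,l,h)$ in the total space as well.

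Finally, for the cohomology statement we use that removing a closed subset $Z$ of codimension $c$ from an oriented rational homology manifold (orbifold) $M$ does not change rational cohomology below degree $c-1$. By the long exact sequence of the pair and duality, $H^*(M,M\setminus Z;\bQ)\cong H^{BM}_{\dim M-*}(Z;\bQ)$, which vanishes when $\dim M-*>\dim Z$, i.e. for $*<c$. This yields injectivity for $*<c$ and surjectivity for $*<c-1$, hence an isomorphism for $*<2\upsilon-1$. One can apply this to coarse spaces after passing to the vertical coarse model $|\cH_{g,l}(\fp_{h,e})\tp_\vcoarse|$, which is rationally equivalent, or directly fibrewise over $\cH_{h,e}\tp$ using a Serre spectral sequence comparison. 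The fibrewise version is cleanest: apply duality in $\CBra^{g,l}_{\Sigma,\coarse}$, which is a rational homology manifold by Corollary \ref{cor:Braorbifold}, and then compare the Leray spectral sequences. The main point needing care is the orbifold and stratification bookkeeping, namely the finiteness of strata and the dimension bound of $\tbrv$-images; the rest is formal.
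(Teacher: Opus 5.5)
\paragraph{Where the argument breaks.} Your codimension count and your cohomological deduction are fine, and they follow the paper's route. The paper also argues fibrewise in $\CBra^{g,l}_D$: with no simple branch value, every branch value has norm $\ge2$, so there are at most $\upsilon(g,l,h)$ of them. Your duality argument usefully spells out the paper's ``in particular'', and its fibrewise version does not even need the total space to be an orbifold.

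The gap is in the first assertion. You deduce that $\cH_{h,e}\tp$ is an oriented orbifold of dimension $4\upsilon(h,e,0)-6$ because $\Aut(\bP^1)$ acts on $\CBra^{h,e}_{\Ptp}$ with finite stabilisers, citing that covers ``with $g\ge1$'' have finitely many automorphisms. But the source of $D\to L$ has genus $h$, not $g$, and for $h=0$ the claim is false:
\begin{itemize}
\item For $e\ge2$, the map $z\mapsto z^e$ has a $\C^*$ of automorphisms, so $\cH_{0,e}\tp$ is not an orbifold. For example, $\cH_{0,2}\simeq\bbB(\C^*\rtimes\bZ/2)$, of ``dimension'' $4\upsilon(0,2,0)-6=-2$.
\item For $e=1$, the base is $\bbB\Aut(\bP^1)$, which no convention turns into an orbifold.
\end{itemize}
The case $h=0$, $e=1$ is $\cH_{g,l}$ itself. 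So the ``orbifold bundle over an orbifold base'' argument misses exactly the cases the paper uses most, for instance in Example \ref{ex:ccHcH}.

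\paragraph{The fix.} The paper quotients the total space instead of the base.
\begin{itemize}
\item Pull $q_{g,l,h,e}\tp$ back along the fibre inclusion $\CBra^{h,e}_{\Ptp}\to\cH_{h,e}\tp$. This gives $\CBra^{h,e}_{\Ptp}\times_{\cH_{h,e}\tp}\cH_{g,l}(\fp_{h,e})\tp$, which is an oriented orbifold of dimension $4\upsilon(h,e,0)+4\upsilon(g,l,h)$. Here Proposition \ref{prop:orbifold} applies to both base and fibre, since the target surface is fixed and nothing is quotiented.
\item $\Aut(\bP^1)\tp$ acts on this total space with finite stabilisers, even though points of $\CBra^{h,e}_{\Ptp}$ may have positive-dimensional stabilisers. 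The reason is that the composite $C\to L$ has connected source of genus $g\ge1$. Hence it has at least three branch values, and its automorphism group is finite.
\item The quotient by this $6$-dimensional connected group is $\cH_{g,l}(\fp_{h,e})\tp$. It is therefore an oriented orbifold of dimension $4\upsilon(h,e,0)+4\upsilon(g,l,h)-6$.
\end{itemize}
The hypothesis $g\ge1$ must be used on the top cover, not on $D\to L$.
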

\begin{proof}
Consider the map of topological stacks $\CBra^{h,e}_{\Ptp}\to\cH_{h,e}\tp$ given by the inclusion of a fibre of the map $q_{h,e,0}\tp\colon\cH_{h,e}\tp\to\fM_0\tp$. We may pullback the smooth map $q_{g,l,h,e}\tp\colon\cH_{g,l}(\fp_{h,e})\tp\to\cH_{h,e}\tp$ from Notation \ref{nota:qglhe} along this inclusion 
to obtain a smooth map
\begin{equation}
\label{eq:smoothpullback}    
\CBra^{h,e}_{\Ptp}\times_{\cH_{h,e}\tp}\cH_{g,l}(\fp_{h,e})\tp\to \CBra^{h,e}_{\Ptp}
\end{equation}
whose fibre over $\theta\colon D\to \Ptp$ is $\CBra^{g,l}_D$. By Proposition \ref{prop:orbifold} the base and fibre of the smooth map \eqref{eq:smoothpullback} are oriented orbifolds of dimensions $4\upsilon(h,e,0)$ and $4\upsilon(g,l,h)$, hence the fibre product in \eqref{eq:smoothpullback} is an orbifold of dimension $4\upsilon(h,e,0)+4\upsilon(g,l,h)$.
The connected Lie group $\Aut(\bP^1)\tp$ of real dimension 6 acts on the fibre product \eqref{eq:smoothpullback} with finite stabilisers, thanks to the assumptions on $g,l,h,e$ (compare with Remark \ref{rem:cHgdDMstack}). It follows that the quotient topological stack
\[
\cH_{g,l}(\fp_{h,e})\tp\cong \pa{\CBra^{h,e}_{\Ptp}\times_{\cH_{h,e}\tp}\cH_{g,l}(\fp_{h,e})\tp}
\ /\ \Aut(\bP^1)\tp
\]
is an oriented orbifold of real dimension $4\upsilon(h,e,0)+4\upsilon(g,l,h)-6$.

For the second statement, it suffices to estimate codimensions on fibres of $q_{g,l,h,e}\tp$. The complement $Z$ of the intersection of $\ccH_{g,l}(\fp_{h,e})\tp$ with a fibre $\CBra^{g,l}_D$ of $q_{g,l,h,e}\tp$ is the closed substack of 
$\CBra^{g,l}_D$ parametrising degree $l$ maps $\theta\colon C\to D$ admitting no simple branch value: if this is the case, the norm of the local monodromy of $\theta$ around each branch value of $\theta$ is at least 2, so that $\theta$ admits at most $\upsilon(g,l,h)$ branch values. It follows that 
$Z$ has real dimension at most $2\upsilon(g,l,h)$, and hence codimension at least $2\upsilon(g,l,h)$ inside $\CBra^{g,l}_D$.

\end{proof}
\begin{ex}
\label{ex:ccHcH}
By Lemma \ref{lem:ccHcodimension}, the inclusion $|\ccH_{g,d}\tp|\hookrightarrow|\cH_{g,d}\tp|$ induces a rational cohomology isomorphism in degrees less than $2\upsilon(g,d,0)-1=2(g+d-1)-1$. As this number is larger than the range of cohomological degrees from Theorem \ref{thm:A}, we may henceforth focus on computing the stable rational cohomology of $|\ccH_{g,d}\tp|$.
\end{ex}
Recall again Notation \ref{nota:ccH}. In the light of Lemma \ref{lem:minimalfactorisation}, we may characterise $\fF_{h,e}\ccH_{g,d}\tp$ as the locus of maps $\theta\colon C\to L$ admitting at least one simple branch value, and such that the canonical factorisation $\bar\theta\colon D\to L$ as in Lemma \ref{lem:minimalfactorisation} satisfies that $\bar\theta$ has degree $e$ and $D$ has genus $h$. We may then consider the map
\[
\iota_{g,d,h,e}\colon\fF_{h,e}\ccH_{g,d}\tp\hookrightarrow\dcH_{g,d/e}(\fp_{h,e})\tp,
\]
sending $(C\xrightarrow{\theta}L)$ to $(C\to D\xrightarrow{\bar\theta}L)$ as above. It is an open inclusion between orbifolds of the same dimension.
\begin{lem}
\label{lem:iotagdhecodimension}
The complement of the image of
$\iota_{g,d,h,e}$ has codimension at least $2\upsilon(g,d/e,h)$ inside $\dcH_{g,d/e}(\fp_{h,e})\tp$. In particular $|\iota_{g,d,h,e}|$ induces a rational cohomology isomorphism in degrees $*<2\upsilon(g,d/e,h)-1$.
\end{lem}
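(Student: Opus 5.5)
The plan mirrors the proof of Lemma \ref{lem:ccHcodimension}: estimate the codimension fibrewise along the smooth map $\dq_{g,d/e,h,e}\tp\colon\dcH_{g,d/e}(\fp_{h,e})\tp\to\cH_{h,e}\tp$, then conclude the cohomological statement by purity (Thom isomorphism / Gysin sequence for orbifolds).

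First I identify the complement of the image of $\iota_{g,d,h,e}$. A point of $\dcH_{g,d/e}(\fp_{h,e})\tp$ is a composite $C\xrightarrow{\eta}D\xrightarrow{\bar\theta}L$ where $\eta$ has degree $l=d/e$, has at least one simple branch value, and does not factor. By Lemma \ref{lem:minimalfactorisation} applied to $\theta=\bar\theta\circ\eta$ (which has a simple branch value, since a simple branch value $w\in D$ of $\eta$ maps to a simple branch value of $\theta$ provided $\bar\theta$ is étale over $\bar\theta(w)$ and no other branch value of $\eta$ lies over $\bar\theta(w)$), the canonical factorisation $C\to\bar C\to L$ is squeezed between $C$ and $D$: $C\to\bar C\to D$. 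Since $\eta$ admits no intermediate factorisation, either $\bar C=D$, in which case the point lies in the image of $\iota_{g,d,h,e}$, or $\bar C=C$, which cannot happen because $\eta$ has degree $>1$ and the canonical map has degree $>1$. So the point fails to lie in the image only if the transposition condition breaks: every simple branch value $w$ of $\eta$ either lies over a branch value of $\bar\theta$, or shares its fibre $\bar\theta^{-1}(\bar\theta(w))$ with another branch value of $\eta$. Thus the complement $Z$ is contained in the closed locus where all simple branch values of $\eta$ are in special position relative to $\bar\theta$.

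Next I bound $Z$ fibrewise. Fix $\bar\theta\colon D\to L$; the fibre of $\dq$ is an open piece of $\CBra^{g,l}_D$, of real dimension $4\upsilon(g,l,h)$, with local coordinates given by the branch values of $\eta$ (each branch value $w$ with norm $N$ contributing $2N$ real dimensions, in the sense of the local model from Proposition \ref{prop:orbifold}). On $Z$, each simple branch value either is pinned to one of the finitely many points of $\bar\theta^{-1}(\text{branch}(\bar\theta))$ or to one of the finitely many points $\bar\theta^{-1}(\bar\theta(w'))$ for another branch value $w'$; either way it loses its two real degrees of freedom, while non-simple branch values have norm $\ge2$ but contribute at most $2$ real parameters as points. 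Counting: if there are $k$ branch values with norms summing to $2\upsilon(g,l,h)$, then the stratum of $Z$ has real dimension at most $2\cdot\#\{\text{non-simple}\}\le 2\upsilon(g,l,h)$, giving codimension at least $2\upsilon(g,l,h)$ in the fibre. The main obstacle is making this dimension count rigorous uniformly over the strata of collisions; I would do it using the filtration $F_k$ by number of branch values and the finite étale maps $\brv$ on strata, exactly as the orbifold charts $\fU(P,\phi,\uU)$ allow, and note that pinned conditions are closed.

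Finally, since $\dq$ is smooth and the codimension estimate holds in every fibre, $Z$ has codimension at least $2\upsilon(g,l,h)$ in the oriented orbifold $\dcH_{g,l}(\fp_{h,e})\tp$ (rationally a homology manifold after coarsening vertically). The long exact sequence of the pair together with Poincaré–Lefschetz duality gives $H^*(X,X\setminus Z;\bQ)=0$ for $*<2\upsilon(g,l,h)$, hence restriction along the open inclusion $|\iota_{g,d,h,e}|$ is an isomorphism in degrees $*<2\upsilon(g,d/e,h)-1$.
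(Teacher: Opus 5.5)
Your route is the paper's: bound the dimension of the complement $Z$ inside each fibre $\dCBra^{g,d/e}_D$ of $\dq_{g,d/e,h,e}\tp$ by $2\upsilon$, where $\upsilon=\upsilon(g,d/e,h)$, and then conclude by purity. Your description of $Z$ is correct: every simple branch value $w$ of $\eta$ either lies in $P=\bar\theta^{-1}(\text{branch values of }\bar\theta)$ or shares its $\bar\theta$-fibre with another branch value of $\eta$. It is in fact more complete than the description written in the paper, which lists only the first alternative. The second alternative genuinely occurs: two simple branch values of $\eta$ in one $\bar\theta$-fibre give $\theta$ a branch value whose monodromy is a product of two disjoint transpositions, so that branch value is not simple.

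Your dimension count, however, fails exactly in this second case. Suppose $w$ is pinned to the fibre of a simple branch value $w'$, and $w'$ is in turn pinned to the fibre of $w$. Then the two conditions coincide and the pair keeps two real parameters, so your bound $2\cdot\#\{\text{non-simple}\}$ is false. A concrete example: take $e\ge2$ and all $2\upsilon$ branch values of $\eta$ simple, arranged in $\upsilon$ pairs lying in common $\bar\theta$-fibres away from $P$. This locus lies in $Z$, has no non-simple branch values, and has real dimension $2\upsilon$.

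The repair is to group the branch values of $\eta$ according to their image in $L$.
\begin{itemize}
\item A group lying over a branch value of $\bar\theta$ is contained in the finite set $P$, so it contributes no parameters.
\item Any other group contributes at most $2$ real parameters (its image point in $L$). It also has total norm at least $2$, because it contains either a non-simple branch value or at least two simple ones.
\end{itemize}
Since the norms add up to $2\upsilon$, there are at most $\upsilon$ groups of the second kind, which gives $\dim Z\le 2\upsilon$. With this correction the rest of your argument goes through.
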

\begin{proof}
We pick $(D\to L)\in\cH_{h,e}\tp(*)$ and consider the 
fibre $\dCBra^{g,d/e}_D$ of $\dq_{g,d/e,h,e}\tp$ at $(D\to L)$. It suffices to show that the complement $Z$ of the intersection of the image of $\iota_{g,d,h,e}$ with $\dCBra^{g,d/e}_D$ has codimension at least $2\upsilon(g,d/e,h)$ inside $\dCBra^{g,d/e}_D$. If $P\subseteq D$ is the finite set of all preimages of branch values of $D\to L$ (including those that are not branch points), then $Z$ is the locus of $\CBra^{g,d/e}_D$ comprising maps $\theta\colon C\to D$ such that all branch values of $\theta$ belong to $P$ or are not simple branch values. The number of branch values outside $P$ can therefore be at most $\upsilon(g,d/e,h)$. It follows that $Z$ has dimension at most $2\upsilon(g,d/e,h)$ and hence codimension at least $2\upsilon(g,d/e,h)$ inside $\dCBra^{g,d/e}_D$. 
\end{proof}
\begin{nota}
Recall Equations \ref{eq:rvcrs} and \ref{eq:cHvcrs}. We denote by
\[
\dr_{g,l,h,e}\colon|\dcH_{g,l}(\fp_{h,e})\tp_\vcoarse|\to|\cH_{h,e}\tp|
\]
the pullback along $|\epsilon_{h,e}\tp|$ of the projection
\[
|\dCBra^{g,l}_\Sigma|\sslash\Homeo^+(\Sigma)\to\bbB\Homeo^+(\Sigma)\simeq|\fM_h\tp|,
\]
where $\Sigma$ is a connected closed oriented surface of genus $h$.
\end{nota}
We observe that there is a commutative square of homotopy types over $|\cH_{h,e}\tp|$ as follows, with vertical maps being rational homology equivalences
\begin{equation}
\label{eq:diagramdrQequivalence}
\begin{tikzcd}[row sep=10]
{|\dcH_{g,l}(\fp_{h,e})\tp|}\ar[r]\ar[d,"\simeq_\bQ"]&{|\cH_{g,l}(\fp_{h,e})\tp|}\ar[d,"\simeq_\bQ"]\\
{|\dcH_{g,l}(\fp_{h,e})\tp_\vcoarse|}\ar[r]&{|\cH_{g,l}(\fp_{h,e})\tp_\vcoarse|}
\end{tikzcd}
\end{equation}

We are now ready to prove Theorem \ref{thm:B}.
\begin{proof}[Proof of Theorem \ref{thm:B}]
Recall Notation \ref{nota:intfp}, and abbreviate $\upsilon=\upsilon(g,d/e,h)$.
By Theorem \ref{thm:Thetaisomorphism} and Corollary \ref{cor:defnTheta} the following composite map of parametrised rational spectra over $\cH_{h,e}$ induces a fibrewise isomorphism on homotopy groups $\pi_*$ in degrees $*>-(2\upsilon-J_{d/e})/I_{d/e}$:
\[
\pa{\int_{|\fp_{h,e}\tp|}\cA(d/e)}_{2\upsilon}\xrightarrow{\Theta^{g,d/e}_{h,e}[-4\upsilon]}(r_{g,d/e,h,e})*\bQ\to (\dr_{g,d/e,h,e})_*\bQ.
\]
We may then apply $|\cH_{h,e}\tp|_*$ to the previous and obtain a map of rational spectra 
\[
|\cH_{h,e}\tp|_*\pa{\int_{|\fp_{h,e}\tp|}\cA(d/e)}_{2\upsilon}\to|\dcH_{g,l}(\fp_{h,e})\tp_\vcoarse|_*\bQ
\]
which induces an isomorphism on homotopy groups in the same range. The last map may now be composed
with the maps
\[
|\dcH_{g,l}(\fp_{h,e})\tp_\vcoarse|_*\bQ\xrightarrow{\simeq}|\dcH_{g,l}(\fp_{h,e})\tp|_*\bQ\xrightarrow{|\iota_{g,d,h,e}|}|\fF_{h,e}\ccH_{g,d}\tp|_*\bQ
\]
where the first is an equivalence as observed in 
\eqref{eq:diagramdrQequivalence}, whereas the second is an equivalence in the large range of homotopy groups $*>2\upsilon-1$ by Lemma \ref{lem:iotagdhecodimension}.
\end{proof}

\subsection{Proof of Theorem \ref{thm:A}} 
Recall from Lemma \ref{lem:ccHcodimension} and Example \ref{ex:ccHcH} that the coarse space $\ccH_{g,d,\coarse}\tp$, which is open in $\cH_{g,d,\coarse}\tp$, is an oriented rational homology manifold of dimension $4g+4d-10$. Instead of $H^*(|\ccH_{g,d}\tp|;\bQ)\cong H^*(|\ccH_{g,d,\coarse}\tp|;\bQ)$ we may therefore focus on computing the relative homology $H_*(|\ccH_{g,d,\coarse}\tpp|,\infty;\bQ)$.
\begin{nota}
We denote by $\bar\fF_{h,e}\ccH_{g,d,\coarse}\tp$ the closure of the stratum $\fF_{h,e}\ccH_{g,d,\coarse}\tp$ inside
$\ccH_{g,d,\coarse}\tp$. Similarly, we denote by 
$\bar\fF_{h,e}\cH_{g,d,\coarse}\tp$ the closure of the subspace $\fF_{h,e}\ccH_{g,d,\coarse}\tp$ inside
$\cH_{g,d,\coarse}\tp$. It is the locus of those equivalence classes of degree $d$ maps $\theta\colon C\to L$ that admit some factorisation through a degree $e$ map $D\to L$.
\end{nota}
Again by Lemma \ref{lem:ccHcodimension},
each stratum $\fF_{h,e}\ccH_{g,d,\coarse}\tp$, which is identified along $\iota_{g,d,h,e,\coarse}$ with an open in $\cH_{g,d/e}(\fp_{h,e})\tp_\coarse$, is a rational homology manifold of dimension $4\upsilon(h,e,0)+4\upsilon(g,d/e,h)-6$. The codimension of $\fF_{h,e}\ccH_{g,d,\coarse}\tp$ inside $\ccH_{g,d,\coarse}\tp$ is therefore the number $4\rho(h,e,d)$ introduced in Notation \ref{nota:rho}. 

\begin{lem}
\label{lem:surjectivecollapse}
The collapse map
\[
H_*(|(\bar\fF_{h,e}\ccH_{g,d,\coarse}\tp)^\infty|,\infty;\bQ)\to H_*(|(\fF_{h,e}\ccH_{g,d,\coarse}\tp)^\infty|,\infty;\bQ)
\]
is surjective in the range of homological degrees
\[
*>\dim(\fF_{h,e}\ccH_{g,d}\tp)-\ (2\upsilon(g,d/e,h)-J_{d/e})/I_{d/e}
\]
and in particular in the range $*>\dim(\ccH_{g,d}\tp)-(2(g+d-1)-J_d)/I_d$.
\end{lem}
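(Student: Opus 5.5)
The idea is to produce the top-range classes of the open stratum from Borel--Moore classes that visibly extend over the closure. We use the parametrised map $\Theta^{g,d/e}_{h,e}$ of Notation~\ref{nota:intfp}, which lands in chains on the whole family $\cH_{g,d/e}(\fp_{h,e})\tp_\vcoarse$ rather than only on its non-factoring part. We then push forward to the closure $\bar\fF_{h,e}\ccH_{g,d,\coarse}\tp$.

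\textbf{Step 1: a proper map onto the closure.} Composing $C\to D\to L$ into $C\to L$ defines a map $\pi\colon\ccH_{g,d/e}(\fp_{h,e})\tp_\coarse\to\bar\fF_{h,e}\ccH_{g,d,\coarse}\tp$ (or its vertically coarse version), compatible with the fibrewise one-point compactifications used in \eqref{eq:cHvcrsplus}. It has finite fibres and is proper, since a map of degree $d$ factors through a degree-$e$ map in only finitely many ways. Over the open stratum $\fF_{h,e}\ccH_{g,d,\coarse}\tp$, the map $\pi$ restricts to the inverse of the open embedding $\iota_{g,d,h,e}$ onto its image, which lies in $\dcH_{g,d/e}(\fp_{h,e})$. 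Properness gives an induced map of pairs on one-point compactifications, and hence a pushforward on relative homology.

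\textbf{Step 2: a commutative square.} We will show that the composite
\[
H_*\pa{\cH_{g,d/e}(\fp_{h,e})^\infty,\infty}\xrightarrow{\pi_*} H_*\pa{(\bar\fF_{h,e})^\infty,\infty}\xrightarrow{\text{collapse}} H_*\pa{(\fF_{h,e})^\infty,\infty}
\]
coincides with the collapse onto $\dcH_{g,d/e}(\fp_{h,e})^\infty$ (this is the map $\dw_\bQ$ done in families), followed by the collapse onto the image of $\iota_{g,d,h,e}$. The reason is that each of these collapses only restricts chains to an open subset, and $\pi$ is a homeomorphism over that open subset. Consequently, surjectivity of the collapse from the closure follows once the outer composite is surjective in the stated range.

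\textbf{Step 3: surjectivity of the outer composite.} Precompose with $\Theta^{g,d/e}_{h,e}$, taking cohomology over the base $|\cH_{h,e}\tp|$. Theorem~\ref{thm:Thetaisomorphism} states that $\dw_\bQ\circ\Theta$ is a fibrewise isomorphism on $\pi_*$ in degrees $*>4\upsilon-(2\upsilon-J_{d/e})/I_{d/e}$, with $\upsilon=\upsilon(g,d/e,h)$. We then assemble over the base using Poincar\'e duality on the total orbifold, of dimension $\dim\fF_{h,e}\ccH_{g,d}\tp$ by Lemma~\ref{lem:ccHcodimension}, exactly as in the proof of Theorem~\ref{thm:B}. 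This gives surjectivity, indeed an isomorphism, onto the relative homology of the compactified $\dcH$ in degrees $*>\dim\fF_{h,e}\ccH_{g,d}\tp-(2\upsilon-J_{d/e})/I_{d/e}$. The further collapse onto the image of $\iota$ is an isomorphism in a larger range by Lemma~\ref{lem:iotagdhecodimension}. The main obstacle is making this fibrewise-to-global passage rigorous at the level of compactly supported chains, namely that the fibrewise one-point compactification of a family of rational homology manifolds dualises correctly to $\dq_*\bQ$. This is already implicit in the paper's earlier use of Corollary~\ref{cor:Braorbifold} in families, so we will cite that.

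\textbf{Step 4: comparing ranges.} We have $\dim\ccH_{g,d}\tp=\dim\fF_{h,e}+4\rho$ with $\rho=\rho(h,e,d)$. Expanding $\rho$ shows $g+d-1=\upsilon+\rho+h+e-1$. Since $d/e\ge2$ we have $\rho\ge h+e-1$, so $2(g+d-1)\le2\upsilon+4\rho$. Using $I_d\ge I_{d/e}\ge1$ and $J_d\ge J_{d/e}$ from Notation~\ref{nota:goodconstantsIJ}, we obtain
\[
(2(g+d-1)-J_d)/I_d\le 4\rho+(2\upsilon-J_{d/e})/I_{d/e}.
\]
Here we treat separately the case where a numerator is negative, in which the inequality is trivial. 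It follows that the second range in the statement is contained in the first.
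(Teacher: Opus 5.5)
Your strategy is the paper's: push forward along the proper forgetful map from the whole family $\cH_{g,d/e}(\fp_{h,e})$ and compare collapse maps in a commutative diagram. Surjectivity then comes from the fact that precomposing with $\Theta^{g,d/e}_{h,e}$ gives the (Poincar\'e dual of the) isomorphism from the proof of Theorem~\ref{thm:B}, together with Lemma~\ref{lem:iotagdhecodimension}, and you finish with the numerical comparison.

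However, Step~1 is wrong as stated. There is no map $\pi\colon\ccH_{g,d/e}(\fp_{h,e})\tp_{\coarse}\to\bar\fF_{h,e}\ccH_{g,d,\coarse}\tp$, because $C\to D\to L$ need not have a simple branch value even when $C\to D$ has one and does not factor. For example, take $d=4$, $e=2$, $h=0$, and a double cover $C\to D$ whose $2g+2$ branch values form pairs $\{p,\sigma(p)\}$ swapped by the deck involution $\sigma$ of $D\to L$, avoiding its ramification points. Then every branch value of $C\to L$ has monodromy of cycle type $(2,2)$.

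In Step~2 you silently switch to the whole $\cH_{g,d/e}(\fp_{h,e})$ as source. That is the right source, since $\Theta$ lands in chains on the whole family, but then $\pi$ lands only in the closure $\bar\fF_{h,e}\cH_{g,d,\coarse}\tp$ of the stratum inside $\cH_{g,d,\coarse}\tp$. The fix is the extra arrow in the paper's pentagon: follow $\pi_*$ by the collapse onto $\bar\fF_{h,e}\ccH_{g,d,\coarse}\tp=\bar\fF_{h,e}\cH_{g,d,\coarse}\tp\cap\ccH_{g,d,\coarse}\tp$, which is open in $\bar\fF_{h,e}\cH_{g,d,\coarse}\tp$. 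Commutativity then requires $\pi^{-1}(\fF_{h,e}\ccH_{g,d,\coarse}\tp)$ to be exactly the image of $\iota_{g,d,h,e}$. That is, the degree $e$ factorisation must be unique on the stratum, and you should justify this via Lemma~\ref{lem:minimalfactorisation}(2). Also, finite fibres alone do not give properness.

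In Step~4, the case $2\upsilon<J_{d/e}$ is not one where the inequality $(2(g+d-1)-J_d)/I_d\le 4\rho+(2\upsilon-J_{d/e})/I_{d/e}$ is trivial. It can actually fail, e.g.\ for $d=4$, $e=2$, $h=0$, $I_2=1<I_4$ and $J_2=J_4>2g+6$. What does hold in that case is $2(g+d-1)-J_d\le 2\upsilon+4\rho-J_d<4\rho$, hence $(2(g+d-1)-J_d)/I_d\le 4\rho$. So the second range only contains degrees $*>\dim\fF_{h,e}\ccH_{g,d}\tp$, where the Borel--Moore homology of the stratum vanishes and surjectivity is automatic. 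The paper's own chain of inequalities passes over this case silently.
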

\begin{proof}
Abbreviate by $\delta$ the (real) dimension of $\fF_{h,e}\ccH_{g,d}$.
By Lemma \ref{lem:iotagdhecodimension} and by the proof of Theorem \ref{thm:B}, the composition of the two top horizonal maps and the right vertical map
in the following diagram is surjective in homology relative to $\infty$ in the first given range of degrees, since its precomposition with the map $|\cH_{h,e}\tp|_*\Theta^{g,d/e}_{h,e}[\delta]$ from Notation \ref{nota:intfp} is an isomorphism:
\begin{equation}
\label{eq:compositecollapse}
\begin{tikzcd}
{|\cH_{g,d/e}(\fp_{h,e})\tpp_\vcoarse|}\ar[r,"\simeq_\bQ"] & {|\cH_{g,d/e}(\fp_{h,e})\tpp_\coarse|}\ar[r]\ar[dl,dashed]&{ |\dcH_{g,d/e}(\fp_{h,e})\tpp_\coarse|}\ar[d,"|\iota_{g,d,h,e,\coarse}|"]\\
{|(\bar\fF_{h,e}\cH_{g,d,\coarse}\tp)^\infty|}\ar[r]&{|(\bar\fF_{h,e}\ccH_{g,d,\coarse}\tp)^\infty|}\ar[r]&{|(\fF_{h,e}\ccH_{g,d,\coarse}\tp)^\infty|}.
\end{tikzcd}
\end{equation}
All horizonal maps in the pentagon are collapse maps corresponding to open inclusions.
The dashed diagonal map is constructed as follows: we have a proper forgetful map $\cH_{g,d/e}(\fp_{h,e})\tp_\coarse\to\cH_{g,d,\coarse}\tp$ sending the class of $(C\to D\to L)$ to the class of $(C\to L)$. Its image is contained in (in fact, equal to) $\bar\fF_{h,e}\cH_{g,d}$, and we consider the associated map on one point compactifications. The pentagon commutes by inspection; it follows that the bottom right horizontal collapse map is also surjective in the first given range of homological degrees.

For the final statement, we observe that the first given range of homological degrees $\dim(\fF_{h,e}\ccH_{g,d}\tp)-\ (2\upsilon(g,d/e,h)-J_{d/e})/I_{d/e}$ may be rewritten as the expression 
$\dim(\ccH_{g,d}\tp)-4\rho(h,e,d)-(2\upsilon(g,d/e,h)-J_{d/e})/I_{d/e}$,
and we have inequalities
\[
\begin{split}
&4\rho(h,e,d)+(2\upsilon(g,d/e,h)-J_{d/e})/I_{d/e}\\
\ge&4(h+e-1)(d/e-1)+(2(g-1)-2(h-1)d/e-J_d)/I_d\\
\ge&4(e-1)(d/e-1)+(2(g-1)+2d/e-J_d)/I_d\\
\ge&(2(g+d-1)-J_d)/I_d,
\end{split}
\]
where we used the fourth property in Notation \ref{nota:goodconstantsIJ}, the fact that the second expression is mimimized for $h=0$, and the fact that the third expression is minimized, among values of $e$ between $1$ and $d/2$, for $e=1$.
\end{proof}
\begin{proof}[Proof of Theorem \ref{thm:A}]
We filter the pointed space $\ccH_{g,d,\coarse}\tpp$  by suitable unions of its closed pointed subspaces $(\bar\fF_{h,e}\ccH_{g,d,\coarse}\tp)^\infty$: for $k\ge1$ we denote by $S_k$ the set of pairs $(h,e)$ where $h\ge0$ is arbitrary, and $e\mid d$ is a divisor such that $d/e$ factors as a product of exactly $k$ prime numbers. We then let
$F_k\ccH_{g,d,\coarse}\tpp$ be the union of all subspaces $(\bar\fF_{h,e}\ccH_{g,d,\coarse}\tp)^\infty$ corresponding to pairs $(h,e)\in S_{k'}$ for some $k'\le k$. We also set $F_0\ccH_{g,d,\coarse}\tpp=\set{\infty}$.
We thus obtain an increasing filtration of $\ccH_{g,d,\coarse}\tpp$ by closed pointed subspaces. We may then run a spectral sequence to compute the homology of $\ccH_{g,d,\coarse}\tpp$. By Lemma \ref{lem:surjectivecollapse}, the composite map
\[
\begin{tikzcd}
\bigoplus_{(h,e)\in S_k}H_*(|(\bar\fF_{h,e}\ccH_{g,d\coarse}\tp)^\infty|,\infty;\bQ)\ar[r]& H_*(|F_k\ccH_{g,d,\coarse}\tpp|,\infty;\bQ)\ar[dl]\\
H_*(|F_k\ccH_{g,d,\coarse}\tpp|,|F_{k-1}\ccH_{g,d,\coarse}\tpp|;\bQ)\ar[r,"\cong"]& \bigoplus_{(h,e)\in S_k}H_*(|(\fF_{h,e}\ccH_{g,d\coarse}\tp)^\infty|,\infty;\bQ)
\end{tikzcd}
\]
is surjective in homological degrees $*> \dim(\ccH_{g,d}\tp)-(2(g+d-1)-J_d)/I_d$; in particular the diagonal map is also surjective in the same range.  
It follows that the spectral sequence collapses in this range of degrees, in which we obtain isomorphisms
\[
H_*(|\ccH_{g,d,\coarse}\tpp|,\infty;\bQ)\cong \bigoplus_{\overset{h\ge0}{1\le e<d,\ e\mid d}}H_*(|(\fF_{h,e}\ccH_{g,d\coarse}\tp)^\infty|,\infty;\bQ).
\]
The statement of Theorem \ref{thm:A} now follows
by Poincar\'e--Lefschetz duality.
\end{proof}
\subsection{Proof of Theorem \ref{thm:D}}
\label{subsec:proofthmD}
We fix $l\ge2$ and a closed oriented surface $\Sigma$ of genus $h\ge0$ throughout the subsection.
We start the subsection by introducing the class $\xi_{l,h}$.
Recall Definition \ref{defn:int}, and let $I\colon\FS\to\Sp_\bQ^\bN$ be a non-unital symmetric monoidal functor with underlying object $I(*)\in\Sp_\bQ^\bN$ satisfying $I(*)_0\simeq0$. Then for every finite set $S$ of cardinality at least 2 we have that the part of $I(S)$ of grading 1 vanishes; it follows that for a connected homotopy type $Y$ we have
\[
\pa{\int_YI}_1\simeq(\colim_{\FS^{\le1}_{/Y}}I)_1\simeq \colim_YI(*)_1,
\]
according to the following notation.
\begin{nota}
\label{nota:FSlek}
For $k\ge0$ we denote by $\FS^{\le k}\subseteq\FS$ the (possibly empty) full subcategory spanned by finite sets of cardinality at most $k$. 
\end{nota}
For instance, when $Y=|\Sigma|$ and $I=\cAh(l)_+$, from the equivalence $(\cAh(l)_+)_1\simeq \bQ$ we obtain the equivalence $\pa{\int_{|\Sigma|}\cAh(l)}_1\simeq|\Sigma|_!\bQ$; if we shift this equivalence we then also obtain $\pa{\int_{|\Sigma|}\cA(l)}_1\simeq|\Sigma|_*\bQ$. These equivalences are $\Homeo^+(\Sigma)$-equivariant, so they can be regarded as equivalences of parametrised rational spectra over $|\fM_h\tp|\simeq\bbB\Homeo^+(\Sigma)$. For instance, the last equivalence takes the form $\pa{\int_{|\fp_h\tp|}\cA(l)}_1\simeq|\fp_h\tp|_*\bQ$, using Notation \ref{nota:universal_curve}.

\begin{defn}
\label{defn:xiclass}
Recall Notation \ref{nota:universal_curve}. We denote by 
\[
\xi_{l,h}\in H^0\pa{|\fM_h\tp|;\int_{|\fp_h\tp|}\cA(l)}_1
\]
the class corresponding to the composite map of rational spectra
\[
\bQ\xrightarrow{1}|\cC_h\tp|_*\bQ\simeq|\fM_h\tp|_*|\fp_h\tp|_*\bQ\simeq |\fM_h\tp|_*\pa{\int_{|\fp\tp|}\cA(l)}_1,
\]
where the first map labeled ``1'' is the unit.
\end{defn}
Loosely speaking, $\xi_{l,h}$ picks, in a $\Homeo^+(\Sigma)$-equivariant way, the fundamental class of $\Sigma$, seen as a point in $\pi_0(\int_{|\Sigma|}\cA(l))_1\cong\pi_2(\int_{|\Sigma|}\cAh(l))_1$.

\begin{nota}
\label{nota:intfiltration}
Recall Notation \ref{nota:FSlek}. For a connected homotopy type $Y$, a presentably symmetric monoidal $\infty$-category $\cC$ and a non-unital symmetric monoidal functor $I\colon \FS\to\cC$, we filter $\int_YI$ by the graded spectra
\[
F_k\int_YI\coloneqq \colim_{\FS^{\le k}_{/Y}}I\in\cC,\quad k\ge0.
\]
We obtain a refinement of $\int_YI$ to a commutative algebra in $\bN_{\le}$-filtered objects in $\cC$, i.e. in the functor category $\Fun(\bN_{\le},\cC)$ with Day convolution.
\end{nota}
In the setting $Y=|\Sigma|$ and $I=\cAh(l)_+$, so $\cC=\Sp_\bQ^\bN$, we have the equivalence 
\[
\pa{F_1\int_{|\Sigma|}\cAh(l)}_1\simeq\pa{\int_{|\Sigma|}\cAh(l)}_1\simeq|\Sigma|_!\bQ.
\]
The multiplicativity of the filtration from Notation \ref{nota:intfiltration} implies in particular that for all $n\ge0$ the map $\xi_{l,h}[2]\cdot-\colon\pa{\int_{|\Sigma|}\cAh(l)}_n[2]\to\pa{\int_{|\Sigma|}\cAh(l)}_{n+1}$ refines to a filtered map, up to shifting the filtration in the target. More precisely, we have maps as follows, naturally in $k\in\bN_{\le}$:
\begin{equation}
\label{eq:xilhfiltered}
\xi_{l,h}[2]\cdot-\colon\pa{F_k\int_{|\Sigma|}\cAh(l)}_n[2]\to\pa{F_{k+1}\int_{|\Sigma|}\cAh(l)}_{n+1}.
\end{equation}
\begin{defn}
\label{defn:cAfiltration}
Recall Definition \ref{defn:cAheart}. We consider the two-step filtration on $\cAh(l)$ given by $F_0\cAh(l)=\bQ\cdot\sca{\one}$ and $F_k\cAh(l)=\cAh(l)$ for $k\ge1$. It makes $\cAh(l)$ into a commutative algebra in $\Fun(\bN_{\le},\Sp_\bQ^\bN)$. We denote by 
\[
\cAh(l)^\delta\coloneqq\bigoplus_{k\ge0} (F_k/F_{k-1})\cAh(l).
\]
the associated graded algebra: it is a commutative algebra in $\Sp^{\bN\times\bN}_\bQ$.
We also consider $\cAh(l)_+^\delta$ as a non-unital commutative algebra in $\Sp_\bQ^{\bN\times\bN}$.
\end{defn}
Intuitively, in $\cAh(l)^\delta$ we put $\sca{\one}$ in bigrading $(0,0)$ and $\sca{\lambda}$, for $\lambda\in\fP(l)_+$, in bigrading $(1,N(\lambda))$. Note that for all $\lambda,\lambda'\in\fP(l)_+$ we have $\sca{\lambda}\cdot\sca{\lambda'}=0$ in $\cAh(l)^+$. 

The filtration $F_\star\cAh(l)_+$ from Definition \ref{defn:cAfiltration} gives rise to an analogous filtration
$F_\star\int_{|\Sigma|}\cAh(l)_+$ on factorisation homology, i.e. $\int_{|\Sigma|}\cAh(l)_+$ upgrades to a non-unital commutative algebra in $\Fun(\bN_{\le},\Sp_\bQ^\bN)$ too. This can be identified with the filtration from Notation \ref{nota:intfiltration} in the setting $Y=|\Sigma|$ and $I=\cAh(l)_+$. 
Passing to associated graded algebras, we obtain an equivalence of non-unital, $\Homeo^+(\Sigma)$-equivariant commutative algebras in $\Sp_\bQ^{\bN\times\bN}$ as follows
\[
\bigoplus_{k\ge0}(F_k/F_{k-1})\ \int_{|\Sigma|}\cAh(l)_+\simeq \int_{|\Sigma|}\cAh(l)^\delta_+.
\]
Note that the right side only depends on the algebra $\cAh(l)^\delta_+$, which is endowed with the zero product. We may in fact interpret $\int_{|\Sigma|}\cAh(l)^\delta_+$ in terms of configuration spaces as follows, using that the underlying spectrum of $\cAh(l)^\delta_+$ is freely generated by the bigraded set $\fP(l)_+$, in which $\lambda$ has bigrading $(1,N(\lambda))$.
\begin{defn}
\label{defn:Confmonoid}
Recall Definition \ref{defn:colconf}. For a finite set $S$ endowed with a grading $N\colon S\to\bN$ and for a topological space $X$ we consider the disjoint union
\[
\Conf(X,S)\coloneqq\coprod_{k\ge0}\Conf_k(X,S)
\]
as a bigraded topological space: a pair $(P,f)$ of a finite set $P$ with a map $f\colon P\to S$ is in bigrading $(\#P,\sum_{z\in P}N(f(z)))$.

When $X$ is a compact topological space, by taking one point compactifications in each bigrading, we obtain a bigraded pointed topological space
\[
\Conf(X,S)^\infty\coloneqq\bigvee_{k\ge0}\Conf_k(X,S)^\infty.
\]
\end{defn}
There is a structure of
strictly commutative monoid in bigraded pointed spaces on $\Conf(X,S)$ given by superposition of configurations: the superposition of $(P,f)$ and $(P',f')$ is either $(P\sqcup P',f\sqcup f')$ if $P\cap P'=\emptyset$, or it is $\infty$ (in the correct bigrading).
We may now identify $\int_{|\Sigma|}\cAh(l)_+^\delta$, as a commutative algebra in $\Sp_\bQ^{\bN\times\bN}$, with the image of the commutative monoid $\Conf(\Sigma,\fP(l)_+)$ along the symmetric monoidal functor $(|-|,\infty)_!\bQ$ from bigraded pointed spaces to bigtraded rational spectra. See also Example \ref{ex:intConfidentification} for an application of a more general principle to derive this identification.
\begin{proof}[Proof of Theorem \ref{thm:D}]
Consider the element $\xi_{l,h}[2]\in\pi_2(\int_{|\Sigma|}\cAh(l))_1\cong \pi_2(|\Sigma|_!\bQ)$ corresponding to $\xi_{l,h}$. In few words, $\xi_{l,h}[2]$ is the fundamental class of $\Sigma$. It suffices to prove that the map
\begin{equation}
\label{eq:firstTheoremD}  
\xi_{l,h}[2]\cdot -\colon \pa{\int_{|\Sigma|}\cAh(l)}_n[2]\to \pa{\int_{|\Sigma|}\cAh(l)}_{n+1}
\end{equation}
induces an isomorphism on $\pi_*$ in degrees $*>-n+2(n+1)=n+2$. For $n=0$ we just observe that the map $\bQ[2]\to|\Sigma|_!\bQ$ giving the fundamental class induces an isomorphism between vanishing homotopy groups in degrees $*>2$.

From now on we suppose $n\ge1$: this allows us to replace $\cAh(l)$ by $\cAh(l)_+$ in \eqref{eq:firstTheoremD}; moreover, since \eqref{eq:firstTheoremD} is a filtered map, as observed in \eqref{eq:xilhfiltered}, it suffices to prove for all $k\ge0$ that the associated graded map
\begin{equation}
\label{eq:secondTheoremD}  
\xi_{l,h}[2]\cdot -\colon (F_k/F_{k-1})\pa{\int_{|\Sigma|}\cAh(l)}_n[2]\to (F_{k+1}/F_k)\pa{\int_{|\Sigma|}\cAh(l)}_{n+1}
\end{equation}
induces an isomorphism on $\pi_*$ for $*>n+2$. By the previous discussion, it is equivalent to ask whether for all $k\ge0$ the map
\[
H_{i-2}(\Conf_k(\Sigma,\fP(l)_+)_n^\infty,\infty;\bQ)\to H_{i}(\Conf_{k+1}(\Sigma,\fP(l)_+)_{n+1}^\infty,\infty;\bQ)
\]
given by superposition with the fundamental class of $\Conf_1(\Sigma,\set{\trsp})\cong \Sigma$ induces an isomorphism for $i>n+2$. If $2k+1\le n$, this holds true because the above homology groups vanish for dimension reasons. Otherwise, if $\uk=(k_\lambda)_{\lambda\in\fP(l)_+}$ satisfies $\sum_{\lambda\in\fP(l)_+} k_\lambda=k$ and $\sum_{\lambda\in\fP(l)_+} k_\lambda N(\lambda)=n$, then we may define $\uk'$ by replacing $k_\trsp$ with $k'_\trsp=k_\trsp+1$, and the previous map restricts to a map
\[
H_{i-2}(\Conf_{\uk}(\Sigma)^\infty,\infty;\bQ)\to H_{i}(\Conf_{\uk'}(\Sigma)^\infty,\infty;\bQ)
\]
which by 
\cite{Church} (see in particular \cite[Theorem 1.3]{Knudsen}, and the discussion in \cite[§2.3]{ORW2}) induces an isomorphism for $i>2k-k_1$, and in particular for $i>n$. Note also that every $\uk'$ with $\sum k'_\lambda=k+1$ and $\sum k'_\lambda N(\lambda)=n+1$ must satisfy  $k'_\trsp\ge1$, otherwise we would have $n\ge 2k+1$. 
\end{proof}

\subsection{Proof of Theorem \ref{thm:E}}
Recall Definition \ref{defn:Confmonoid}, and in particular the commutative monoid structure on $\Conf(X,S)^\infty$. We generalise the construction in the following definition.
\begin{defn}
For a space $X$ and for $k\ge0$ we denote by $\SP(X)_k= X^k/\fS_k$ the $k$\textsuperscript{th} symmetric power of $X$; here the quotient is strict. We regard $\SP(X)\coloneqq \coprod_{k\ge0}\SP(X)_k$ as the free strictly commutative topological monoid generated by $X$. Its elements may be written as formal sums $\sum_{i=1}^nk_i\cdot x_i$, with $n\ge0$, distinct points $x_1,\dots,x_n$, and coefficients $k_1,\dots,k_n\ge1$.

Similarly, for a finite set $S$, we regard $\SP(X)^S\cong\SP(X\times S)$ as the free strictly commutative topological monoid generated by $X\times S$. Its elements take the form $\sum_{i=1}^n\uk_i\cdot x_i$, where the points $x_i$ are still distinct, but now now $\uk_i\in\bN^S\setminus\uzero$.

If moreover $\cI\subseteq\bN^S$ is an ideal (i.e. $\uzero\notin\cI$, and the sum $\uk+\uk'$ belongs to $\cI$ as soon as either $\uk$ or $\uk$ belongs to $\cI$), then the difference $\bN^S\setminus\cI$ is a partial abelian monoid, and we define
\[
\Conf(X,\bN^S\setminus\cI)=\coprod_{\uk\in\bN^S}\Conf(X,\bN^S\setminus\cI)_{\uk}\subseteq\SP(X)^S=\coprod_{\uk\in\bN^S}\SP(X)_{\uk}
\]
as the subspace comprising those formal sums $\sum_{i=1}^n\uk_i\cdot x_i$ with all coefficients $\uk_i\in\bN^S\setminus \cI$. We regard $\Conf(X,\bN^S\setminus\cI)$ as an $\bN^S$-graded space. When $X$ is a compact topological space, by taking one point compactifications in a $\bN^S$-graded fashion, we obtain the $\bN^S$-graded pointed space
\[
\Conf(X,\bN^S\setminus\cI)^\infty\coloneqq\bigvee_{\uk\in\bN^S}\Conf(X,\bN^S\setminus\cI)_{\uk}^\infty;
\]
we may regard it as the quotient of $\SP(X)^S$ by the topological ideal spanned by those elements $\sum_{i=1}^n\uk_i\cdot x_i$ with at least one $\uk_i\in\cI$.
\end{defn}
\begin{rem}
\label{rem:SPmanifolds}
If $\Sigma$ is an oriented topological surface, then $\SP(\Sigma)_k$ is an oriented topological $2k$-manifold,
and $\SP(\Sigma)_{\uk}\subseteq\SP(\Sigma)^S$, as well as its open subsets of the form $\Conf(\Sigma,\bN^S\setminus\cI)_{\uk}$, are  oriented topological manifolds of dimension $2\sum_{s\in S}k_s$.
\end{rem}
Out of an ideal $\cI\subseteq\bN^S$ we may also extract a commutative algebra in $\bN^S$-graded $\bQ$-vector spaces $\bQ[\bN^S]/\cI$, which is the quotient of the monoid algebra $\bQ[\bN^S]$ by the ideal generated by the elements in $\cI$.
If $X$ is a CW complex, we then have an identification of commutative algebras in $\Sp_\bQ^\bN$
\[
\int_{|X|}\bQ[\bN^S]/\cI\simeq \pa{|\Conf(X,\bN^S\setminus\cI)^\infty|,\infty}_!\bQ.
\]
\begin{ex}
\label{ex:intConfidentification}
We have used the above identification in Subsection \ref{subsec:proofthmD} for $X=\Sigma$, $S=\fP(l)_+$ and $\cI\subseteq\bN^S$ being the ideal of all $\uk$ with $\sum_\lambda k_\lambda\ge2$. In this case $\Conf(X,\bN^S\setminus\cI)\cong\Conf(X,S)$.
\end{ex}
\begin{proof}[Proof of Theorem \ref{thm:E}]
By \cite[Proposition 6.1]{BCP:Hilb}, there are isomorphisms of commutative algebras in $\Vect_\bQ^\bN$ of the form $\cAh(2)\cong\bQ[x]/x^2$, $\cAh(3)\cong\bQ[x]/x^3$, $\cAh(4)\cong\bQ[x,y]/(x^3-4xy,x^4,y^2)$ and $\cAh(5)\cong\bQ[x,y]/(x^4-5x^2y,x^4-25y^2,x^5)$. In all cases $x$ has grading $1$ and $y$ has grading $2$ with respect to the grading induced by the norm. A change of variables $(x,y)\mapsto(x,x^2-4y)$ leads to an isomorphism $\cAh(4)\cong\bQ[x,y]/(x^4,xy,y^2)$; similarly, a change of variables $(x,y)\mapsto(x,x^2-5y)$ leads to an isomorphism $\cAh(5)\cong\bQ[x,y]/(x^5,x^2y,y^2)$. We have thus found a monomial presentation for $\cAh(l)$ for $l\le5$. Precisely, we have identified $\cAh(l)\cong\bQ[\bN]/(\bN\setminus M_l)$ for $l=2,3$, and $\cAh(l)\cong\bQ[\bN^2]/(\bN^2\setminus M_l)$ for $l=4,5$. This perspective suggests to change our notion of grading in the two cases $l=4,5$: instead of the single $\bN$-grading coming from the norm of permutations, we consider in the following on $\cAh(4)$ and $\cAh(5)$ a bigrading coming from declaring $x$ to be in bigrading $(1,0)$ and $y$ to be in bigrading $(0,1)$.
We may now deduce the following equivalences of commutative algebras in $\Sp_\bQ^\bN$ (for $l=2,3$) or $\Sp_\bQ^{\bN\times\bN}$ (for $l=4,5$)
\[
\int_{|\Sigma|}\cAh(l)\simeq \pa{|\Conf(\Sigma,M_l)^\infty|,\infty}_!\bQ,\quad l=2,3,4,5.
\]
We would now like to deduce a formula for $\int_{|\Sigma|}\cA(l)$. For $l=2,3$ we just operate the endofunctor $\shift$ from Remark \ref{rem:evenshift}: using Remark \ref{rem:SPmanifolds} we obtain for $n\ge0$ the identification of $|\Conf(\Sigma,M_l)_n^\infty|_*\bQ
$ with
\[
(|\Conf(\Sigma,M_l)_n^\infty|,\infty)_!\bQ[-2n]\simeq 
\pa{\int_{|\Sigma|}\cAh(l)}_n[-2n]\simeq
\pa{\int_{|\Sigma|}\cA(l)}_n.
\]
For $l=4,5$ we operate the endofunctor of $\Sp_\bQ^{\bN\times\bN}$ sending $(\omega_{a,b})_{(a,b)\in\bN\times\bN}$ to the object $(\omega_{a,b}\ [-2a-4b])_{(a,b)\in\bN\times\bN}$. For $a,b\ge0$, this allows us to identify the shift $|\Conf(\Sigma;M_l)_{a,b}|_*\bQ\ [-2b]$ with
\[
\pa{|\Conf(\Sigma;M_l)^\infty_{a,b}|,\infty}_!\bQ\ [-2a-4b]\simeq 
\pa{\int_{|\Sigma|}\cAh(l)}_{a,b}[-2a-4b]\simeq
\pa{\int_{|\Sigma|}\cA(l)}_{a,b}.
\]

\end{proof}

\subsection{Even and odd dimensional classes}
\label{subsec:oddexamples}
In this subsection we provide partial computations of the cohomology groups appearing in Corollary \ref{cor:F}.

We start by considering $l=2,3$:
we aim at computing $H^*(|\Conf(\fp_0\tp,M_l)_n|;\bQ)$ for $n\ge2l-1$. We first observe that we have an equivalence 
$|\Conf(\fp_0\tp,M_l)_n|\simeq|\Conf(S^2,M_l)_n|\sslash\SO(3)$. There is moreover a fibre sequence
\[
S^2\simeq\SO(3)/\SO(2)\hookrightarrow|\Conf(S^2,M_l)_n\sslash\SO(2)\xrightarrow{\pi} |\Conf(S^2,M_l)_n\sslash\SO(3);
\]
fibres carry a natural orientation, and since $S^2$ has non-zero Euler characteristic, $\pi$ induces an injective map in rational cohomology, as witnessed by the Becker--Gottlieb transfer along $\pi$. In particular the Serre spectral sequence associated with $\pi$ collapses, so that there is an equivalence of graded vector spaces
\[
H^*\pa{|\Conf(S^2,M_l)_n\sslash\SO(2);\bQ}\cong H^*\pa{|\Conf(S^2,M_l)_n\sslash\SO(3);\bQ}\otimes H^*(S^2;\bQ).
\]
We next focus on computing the left hand side.
We may fix a point $\bar z\in S^2$ fixed by the $\SO(2)$-action and filter $\Conf(S^2,M_l)_n\subseteq\SP(S^2)_n$ according to the multiplicity of $\bar z$: more precisely, for $0\le k\le l-1$, we let $F_k=F_k\Conf(S^2,M_l)_n$ denote the subspace in which $\bar z$ has multiplicity at most $k$. Note that $F_k\setminus F_{k-1}$ is a codimension $2$ closed embedded submanifold in $F_k$ with oriented normal bundle. The stratification is preserved by the action of $\SO(2)$, so we have an induced stratification $|F_\star\Conf(S^2,M_l)_n|\sslash\SO(2)$ on $|\Conf(S^2,M_l)_n|\sslash\SO(2)$. We may thus run a spectral sequence to compute $H^*\pa{|\Conf(S^2,M_l)_n|\sslash\SO(2);\bQ}$, with first page 
\begin{equation}
\label{eq:firstpage}    
E_1^{k,i}=H^{k+i}(|F_k|\sslash\SO(2),|F_{k-1}|\sslash\SO(2);\bQ)\cong H^{k+i-2}(|F_k\setminus F_{k-1}|\sslash\SO(2);\bQ).
\end{equation}

To compute this first page, we first note that the difference $F_k\setminus F_{k-1}$ is $\SO(2)$-equivariantly homeomorphic 
to $\Conf(\R^2,M_l)_{n-k}$.

We next argue that for $n-k\ge l$ the rational cohomology of $\Conf(\R^2,M_l)_{n-k}$ has rank 1 in degrees 0 and $2l-3$, and vanishes in all other degrees. For $n-k=l$ this comes from the equivalence $\Conf(\R^2,M_l)_l\simeq S^{2l-3}$; by a variation of \cite[Theorem 2]{Dold}, the stabilisation maps $\Conf(\R^2,M_l)_n\to\Conf(\R^2,M_l)_{n+1}$ are surjective in cohomology in all degrees; and the stable cohomology for $n\to\infty$ is $H^*(\Omega^2S^{2l-1};\bQ)\cong H^*(S^{2l-3};\bQ)$ \cite{GKY2}.

The Serre spectral sequence computing \eqref{eq:firstpage} has second page given by $\cE_2^{p,q}\cong H^p(\bbB\SO(2);\bQ)\otimes H^q(|F_k\setminus F_{k-1}|;\bQ)$; in particular $\cE_2^{p,q}$ vanishes unless $p$ is even and $q\in\set{0,2l-3}$, in which case $\cE_2^{p,q}\cong\bQ$. 
To understand the differential on the $\cE_{2l-2}$-page, we observe that, by the lower bound on $n$, the Lie group $\SO(2)$ acts on the manifold $\Conf(\R^2,M_l)_{n-k}$ with finite stabilisers; in particular the rational cohomology of $|\Conf(\R^2,M_l)_{n-k}|\sslash\SO(2)$ vanishes in high enough degrees. 
It follows that the $\cE_{2l-2}$-differential is non-trivial, and in particular $\cE_\infty^{p,q}\cong\bQ$ if and only if $0\le p\le 2l-4$ is even and $q=0$. 
This computes \eqref{eq:firstpage}, and in particular it shows that the first page $E^1$ is concentrated in even degrees.
The isomorphism of graded vector spaces
\[
H^*\pa{|\Conf(S^2,M_l)_n\sslash\SO(2);\bQ}\cong\bQ[\kappa]/\kappa^{l-1}\otimes H^*(S^2;\bQ),
\]
with $\kappa$ in cohomological degree 2,
follows immediately, whence we deduce the isomorphism of graded vector spaces
$H^*\pa{|\Conf(S^2,M_l)_n\sslash\SO(3);\bQ}\cong\bQ[\kappa]/\kappa^{l-1}$.

We next turn to the cases $l=4,5$. For these, we aim at checking that 
\begin{equation}
\label{eq:oddgroup}
H^{2l-3}\pa{|\Conf(\fp_0\tp,M_l)_{a,3}|;\bQ}\cong H^*\pa{|\Conf(\Ptp,M_l)_{a,3}|\sslash\Aut(\bP^1)\tp;\bQ}\ncong 0
\end{equation}
for $a\ge l$.
We consider the fibre bundle map $\Conf(\Ptp,M_l)_{a,3}\to\Conf_3(\Ptp)$,
given by only retaining the 3 points of a configuration whose coefficient in $M_l\subseteq\bN\times\bN$ has non-trivial (i.e., equal to 1) second component.
The Lie group $\Aut(\bP^1)\tp$ acts transitively on $\Conf_3(\Ptp)$, with stabiliser $\fS_3$. We may therefore fix the configuration $\bar P=\set{0,1,\infty}$ of three points in $\Ptp$, and  identify the homology group in \eqref{eq:oddgroup} with
$H^{2l-3}(|\Conf(\Ptp,M_l)_{a,\bar P}|;\bQ)^{\fS_3}$, where $\Conf(\Ptp,M_l)_{a,\bar P}$ is the fibre at $\bar P$ of said fibre bundle. 
Passing to homology, it suffices to find a non-trivial, $\fS_3$-invariant class in $H_{2l-3}(|\Conf(\Ptp,M_l)_{a,\bar P}|;\bQ)$.

In the following we identify $S^2\cong\Ptp$, and identify $\fS_3$ as a subgroup of $\SO(3)$.
We observe that $\Conf(\R^2,M_l)_{l,0}\simeq S^{2l-3}$, and in particular we have a ``fundamental class'' in $H_{2l-3}(|\Conf(\R^2,M_l)_{l,0}|;\bQ)$. We may implant this class inside a small disc in $S^2\setminus \bar P$, and pick $a-l$ additional distinct points, in the complement of said disc and away from $\bar P$, all having label $(1,0)\in M_l$: the result is a class $w\in H_{2l-3}(|\Conf(S^2,M_l)_{a,\bar P}|;\bQ)$ which is evidently $\fS_3$-invariant. To prove that it is non-zero, we consider $\Conf(S^2,M_l)_{a,\bar P}$ as a smooth complex variety of complex dimension $a$ and let $W$ denote the closure in $\Conf(S^2,M_l)_{a,\bar P}$
of the locus 
comprising configurations $\sum_{i=1}(a_i,b_i)\cdot z_i$ such that there exist indices $i\neq j$ satisfying the following:
\begin{itemize}
\item $z_i,z_j\in S^2\setminus\bar P=\C\setminus\set{0,1}$;
\item $\Re(z_j)=\Re(z_i)$ and $\Im(z_j)>\Im(z_i)$;
\item $a_i=2$ and $a_j=l-2\ge2$.
\end{itemize}
We have that $W$ is a smooth manifold of codimension $2l-3$ inside $\Conf(S^2,M_l)_{a,\bar P}$ away from a locus of codimension at least 2 inside $W$. In particular $W$ carries a fundamental cohomology class in $H^{2l-3}(|\Conf(S^2,M_l)_{a,\bar P})$, and this cohomology class pairs non-trivially with $w$.

\subsection{Picard groups}
\label{subsec:Picard}
Let $d\ge4$ and let $g$ be such that $2(g+d-1)> 2I_d+J_d$. We aim in the following at computing the cohomology groups 
\begin{equation}
\label{eq:H1H2iso}    
H^1(|\cH_{g,d}\tp|;\bQ)\cong0;\quad H^2(|\cH_{g,d}\tp|;\bQ)\cong\bQ^2.
\end{equation}

Abbreviate $\upsilon=\upsilon(g,d,0)=g+d-1$ in the following.
For $i=1,2$, the only contribution to $H^i(|\cH_{g,d}\tp|;\bQ)$ from Corollary \ref{cor:C} comes from $H^i\pa{|\cH_{0,1}\tp|,\int_{|\fp_{0,1}\tp|}\cA(d)}_{2\upsilon}$.
Since $\pa{\int_{|\fp_{0,1}\tp|}\cA(d)}_{2\upsilon}$ is a rational spectrum concentrated in non-positive degrees, and since $|\cH_{0,1}\tp|\simeq\bbB\SO(3)$ is simply connected and rationally 3-connected, a standard spectral sequence argument reduces to computing $\pi_{-i}\pa{\int_{|S^2|}\cA(d)}_{2\upsilon}$ for $i=1,2$. 
We next recall from \cite[§3.3]{BCP:Hilb} that there is a map of graded commutative ring spectra $\bQ[x,y]\to\cA(d)$ that, in all gradings, is surjective on $\pi_*$ in all degrees, and an isomorphism in degrees $*\ge-5$; here $x$ has degree $-2$ and grading 1, whereas $y$ has degree $-4$ and grading $2$. Since $S^2$ has dimension 2, we obtain that the map $\int_{|S^2|}\bQ[x,y]\to\int_{|S^2|}\cA(d)$ induces an isomorphism on $\pi_*$  in all gradings and in degrees $*>-5+2=-3$. We thus reduce to computing $\pi_{-i}\pa{\int_{|S^2|}\bQ[x,y]}_{2\upsilon}$ for $i=1,2$. We may now identify
\[
\pi_{-i}\pa{\int_{|S^2|}\bQ[x,y]}_{2\upsilon}\cong\bigoplus_{a+2b=2\upsilon}H_{2a+4b-i}(|\SP(S^2)_a\times\SP(S^2)_b|;\bQ)[-2b],\quad i=1,2
\]
from which \eqref{eq:H1H2iso} immediately follows, after noting that only the terms with $b\le1$ may contribute non-trivially for $i=2$, and only the term with $b=0$ for $i=1$.

Assume now that the smooth Deligne--Mumford algebraic stack $\cH_{g,d}$ admits a compactification to a \emph{smooth}, proper Deligne--Mumford stack $\overline{\cH_{g,d}}$.
Then the same arguments used in \cite[§7]{LL1} may be employed to show that $\Pic(\cH_{g,d})\otimes\bQ$ injects into $H^2(|\cH_{g,d}\tp|;\bQ)$ along the cycle class map. In particular $\Pic(\cH_{g,d})\otimes\bQ$ has dimension $\le2$. It was however shown in \cite[Proposition 2.15]{DeopurkarPatel} that $\Pic(\cH_{g,d})\otimes\bQ$ has dimension $\ge2$. 
We could not find a reference providing the desired $\overline{\cH_{g,d}}$, but believe that it would be definitely worthwhile to construct such a compactification, even beyond the scopes of this article.

We conclude by mentioning that for $d\le 5$ and for all $g\ge2$ both the rational Picard group $\Pic(\cH_{g,d})\otimes\bQ$ \cite{DeopurkarPatel,SF} and the integral Picard group $\Pic(\cH_{g,d})$ \cite{ArsieVistoli,BolognesiVistoli,CL3} have been computed.

\bibliography{Bibliography5.bib}
\bibliographystyle{alpha}

\end{document}